\newtheorem{thm}{Theorem}
\newtheorem{cor}{Corollary}
\newtheorem{lem}{Lemma}
\newtheorem{prop}{Proposition}
\newtheorem{rem}{Remark}
\newtheorem{assumption}{Assumption}
\title[Milstein-type scheme for MV-SDE]{On Explicit Milstein-type Scheme for Mckean-Vlasov Stochastic Differential Equations with Super-linear Drift Coefficient}
\author{Chaman Kumar}
\address{Department of Mathematics\\ Indian Institute of Technology Roorkee\\ Roorkee, Uttarakhand, India}
\email{chaman.kumar@ma.iitr.ac.in}
\author{Neelima}
\address{Department of Mathematics\\ Ramjas College\\ University of Delhi \\ Delhi, India}
\email{neelima\_maths@ramjas.du.ac.in}
\thanks{}
\begin{document}
\maketitle
\begin{abstract}
We develop an explicit Milstein-type scheme for McKean-Vlasov stochastic differential equations using the notion of derivative with respect to measure introduced by Lions and discussed in \cite{cardaliaguet2013}. The drift coefficient is allowed to grow super-linearly in the space variable. Further, both drift and diffusion coefficients are assumed to be only once differentiable in variables corresponding  to space and measure. The rate of strong convergence is shown to be equal to $1.0$ without using It\^o's formula for functions depending on measure. The challenges arising due to the dependence of coefficients on measure are tackled and our findings are consistent with the analogous results for  stochastic differential equations. 
\end{abstract}

\section{Introduction}
Let $(\Omega,\mathcal{F}, \{\mathcal{F}_t\}_{\{t\ge 0\}},P)$ be a filtered probability space satisfying the usual conditions.  
Assume that $\{W_t\}_{\{t\geq 0\}}$ is an $m$-dimensional Brownian motion. Consider the following $d$-dimensional McKean-Vlasov stochastic differential equation (MV-SDE),
\begin{align}
X_t=X_0+\int_0^t b(X_s, \mu_s^X) ds + \int_0^t \sigma(X_s,\mu_s^X)dW_s  \label{eq:MV_SDE:intro}
\end{align} 
almost surely for any $t\in[0,T]$ where $\mu_t^X$ denotes the law of the random variable $X_t$.  
When the law $\mu_t^X$ is known, MV-SDEs reduce to SDEs with added dependency on time variable. MV-SDEs are widely used in physics, biology and neural activities, see for example, \cite{baladron2012, bolley2011, bossy2015, dreyer2011, guhlke2018}. 
The main purpose of this article is to study Milstein-type numerical approximation of MV-SDEs \eqref{eq:MV_SDE:intro} in strong sense when the drift coefficient is allowed to grow super-linearly in space variable.  
There is a significant interest in the strong approximation of SDEs due to its importance in Multilevel Monte Carlo path simulations for SDEs, see \cite{giles2008}.   
It is well known that the classical Euler scheme for SDE with super-linear coefficients diverges in finite time, see for example \cite{hutzenthaler2010} and hence such divergence can obviously be observed in the case of MV-SDEs. 
The numerical approximation of SDEs in strong sense are well understood in the literature for global and non-global Lipschitz coefficients, see for example \cite{dareiotis2016,  hutzenthaler2015, hutzenthaler2012, kumar2017b,  kumar2019, kumar2017a, sabanis2013, sabanis2016, tretyakov2013,  wang2013} and references therein.  
 Recently, authors in  \cite{reis2019a} developed an explicit tamed Euler scheme and an implicit Euler scheme for simulating MV-SDEs when the drift coefficient satisfies non-global Lipschitz condition (and hence can grow super-linearly) and the diffusion coefficient satisfies global Lipschitz condition in space variable. We develop an explicit Milstein-type scheme for MV-SDEs \eqref{eq:MV_SDE:intro} and study its strong convergence. 
The drift coefficient is assumed to satisfy polynomial Lipschitz condition \textit{i.e.}, it is allowed to grow super-linearly and the diffusion coefficient satisfies global Lipschitz condition in the space variable. 
For the variable corresponding to measure,  coefficients satisfy global Lipschitz condition and are bounded in Wasserstein metric. 
Moreover, derivatives in space variable of drift and diffusion coefficients are respectively assumed to be polynomial Lipschitz and Lipschitz. 
Also, the derivative  of drift and diffusion coefficients with respect to measures satisfy Lipschitz condition and are bounded in Wasserstein metric. 
Further, the rate of strong convergence in $L^2$-norm is shown to be equal to $1$ without using It\^o's formula for functions depending on measure, see \cite{chassagneux}, which is consistent with the analogous result available in literature for SDEs.  
Novel techniques have been developed to tackle challenges arising due to the presence of the law $\mu_t$ in the coefficients.  
The following  mean-field stochastic Ginzburg Landau equation fits well in our framework, 
\begin{align*}
X_t=X_0+\int_0^t \big(\frac{\alpha^2}{2} X_s-X_s^3 + cE[X_s]\big)ds+\int_0^t \alpha X_s dW_s
\end{align*} 
almost surely for any $t\in[0,T]$. 
This equation has been investigated in \cite{reis2019a} and its variant (without mean-field term) in \cite{tien2013}.  
We also remark that the technique developed in this article can be used to investigate the higher order numerical approximations of MV-SDEs. 
To the best of author's knowledge, this is the first paper dealing with Milstein-type scheme for MV-SDEs using the notion of derivatives with respect to measures introduced by Lions in his lectures at the Coll\`ege de France and reproduced  in \cite{cardaliaguet2013}.

\subsection{Notations}
 We now introduce the notations used in this article. 
 The notation $\langle \cdot, \cdot \rangle$ stands for the inner product in $\mathbb{R}^d$.  
  We use the same notation $|\cdot|$ for both  Euclidean and  Hilbert-Schmidt norms and its meaning should be clear from the context. 
   Also, $\sigma x$ denotes the usual matrix multiplication of  $\sigma \in\mathbb{R}^{d \times m}$ and  $x\in\mathbb{R}^d$.
   With a slight abuse of notation,   $b^{(l)}$ and $\sigma^{(l)}$ are used to denote  $l$-th element of  $b\in\mathbb{R}^d$ and $l$-th column vector of $\sigma \in\mathbb{R}^{d \times m}$ respectively  which is clear from the context  and should not cause any confusion in reader's mind. 
  Further, $\sigma^{(k,l)}$ stands for $(k,l)$-th element of $\sigma\in\mathbb{R}^{d\times m}$. 
	For a function $f:\mathbb{R}^d \to \mathbb{R}$, $\partial_x f$ stands for gradient of $f$. 
	$\lfloor \cdot \rfloor$ stands for the floor function.
The symbol $\delta_x(\cdot)$ denotes the Dirac measure  at point $x\in\mathbb{R}^d$.  
Moreover, $\mathcal{P}_2(\mathbb{R}^d)$ denotes the space of probability measures $\mu$ on the measurable space $(\mathbb{R}^d, \mathcal{B}(\mathbb{R}^d))$ satisfying 
$
\int_{\mathbb{R}^d}|x|^2\mu(dx)<\infty. 
$ 
Then, $\mathcal{P}_2(\mathbb{R}^d)$ is a Polish space under the $\mathcal{L}^2$-Wasserstein metric given by 
\begin{align*}
\mathcal{W}_2(\mu_1,\mu_2):=\inf_{\pi\in\Pi(\mu_1,\mu_2)}\Big(\int_{\mathbb{R}^d}\int_{\mathbb{R}^d}|x-y|^2\pi(dx,dy) \Big)^{1/2}
\end{align*} 
where $\Pi(\mu_1,\mu_2)$ is the set of all couplings of $\mu_1, \mu_2\in\mathcal{P}_2(\mathbb{R}^d)$. 
Throughout this article, $K$ stands for a generic constant which may vary from place to place. 

\subsection{Differentiability of functions of  measures}
	There are many different notions for differentiating functions of measures, see for example \cite{ambrosio2008, villani2009}.  
	In this article, we use the notion of differentiability introduced by Lions in his lectures at the Coll\`ege de France which has been reproduced in \cite{cardaliaguet2013}.
We give a brief description of the concept of measure derivative for functions defined on the Wasserstein space $\mathcal{P}_2(\mathbb{R}^d)$.  
	A function $f:\mathcal{P}_2(\mathbb{R}^d)\to\mathbb{R}$ is said to be differentiable at  $\nu_0\in\mathcal{P}_2(\mathbb{R}^d)$ if there exists an atomless, Polish probability space $(\tilde{\Omega}, \tilde{\mathcal{F}}, \tilde{P})$   and a random variable $Y_0\in\mathcal{L}^2(\tilde{\Omega};\mathbb{R}^d)$ such that its law $L_{Y_0}:=\tilde{P}\circ Y_0^{-1}=\nu_0$ and the function $F:\mathcal{L}^2(\tilde{\Omega};\mathbb{R}^d)\to \mathbb{R}$ defined by $F(Z):=f(L_Z)$ has Fr\'echet derivative at $Y_0\in\mathcal{L}^2(\tilde{\Omega};\mathbb{R}^d)$, which we denote by  $F'[Y_0]$ . 
	The function $F$ is called the ``extension"  of $f$. 
	Further, $f$	is said to be of class $C^1$ if its ``extension" $F$ is of class $C^1$.  
	 Since $F'[Y_0]:\mathcal{L}^{2}(\tilde{\Omega};\mathbb{R}^d) \to \mathbb{R}$ is a bounded linear operator,  by Riesz representation theorem, there exists an element $DF(Y_0)\in \mathcal{L}^2(\tilde{\Omega};\mathbb{R}^d)$ such that $F'[Y_0](Z)=\tilde{E} \langle DF[Y_0], Z\rangle$  for all $Z\in\mathcal{L}^2 (\tilde{\Omega};\mathbb{R}^d)$.  
	By Theorem 6.5 (structure of the gradient) in \cite{cardaliaguet2013}, if $f$ is of class $C^1$, then there exists a function $\partial_\mu f(\nu_0): \mathbb{R}^d\to\mathbb{R}^d$ satisfying $
\int_{\mathbb{R}^d} |\partial_\mu f(\nu_0)(x)|^2 \nu_0(dx) <\infty
$
such that $DF(Y_0)=\partial_\mu f(\nu_0)(Y_0)$. 
	Also, $\partial_\mu f(\nu_0)$ is independent of choice of the probability space $(\tilde{\Omega}, \tilde{\mathcal{F}}, \tilde{P})$ and the random variable $Y_0$.  
	The function $\partial_\mu(\nu_0)$  is called the \textit{Lions' derivative} of $f$ at $\nu_0=L_{Y_0}$. 
	Moreover, $\partial_\mu f : \mathcal{P}_2(\mathbb{R}^d)\times \mathbb{R}^d\to\mathbb{R}^d$ is defined as $\partial_\mu f(\nu, z)=\partial_\mu f(\nu)(z)$ for any $\nu \in\mathcal{P}_2(\mathbb{R}^d)$ and  $z\in\mathbb{R}^d$.

%

\section{Assumptions and Main results}
	Let $(\Omega,\{\mathcal{F}_t\}_{\{t\geq 0\}}, \mathcal{F}, P)$ be a filtered probability space satisfying the usual conditions, \textit{i.e.}, the probability space $(\Omega, \mathcal{F}, P)$  is complete, $\mathcal{F}_0$ contains all $P$ - null sets of $\mathcal{F}$ and filtration is right continuous.
	Let $\{W_t\}_{\{t\geq 0\}}$ be an $m$-dimensional Brownian motion adapted to the filtration $\{\mathcal{F}_t\}_{\{t\geq 0\}}$. 
	Assume that $b:\mathbb{R}^d\times\mathcal{P}_2(\mathbb{R}^d) \to \mathbb{R}^{d}$ and $\sigma:\mathbb{R}^d\times\mathcal{P}_2(\mathbb{R}^d) \to \mathbb{R}^{d \times m}$ are  measurable functions.
	We consider the following McKean-Vlasov Stochastic Differential Equation (MV-SDE) defined on $(\Omega,\{\mathcal{F}_t\}_{\{t\geq 0\}}, \mathcal{F}, P)$,  
\begin{align} \label{eq:MVSDE}
X_t=X_0 +\int_0^t b(X_s, \mu_s^X) ds +\sum_{l=1}^m \int_0^t \sigma^{(l)}(X_s,\mu_s^X)dW_s^{(l)} 
\end{align}
almost surely for any $t\in[0,T]$ where $\mu^X_s$ denotes the law of  $X_s$, \textit{i.e.} $\mu_s^X~:=P \circ X_s^{-1}$ for every $s\in[0,T]$ and $X_0$ stands for an  $\mathbb{R}^d$-valued and $\mathcal{F}_0$-measurable random variable.   

We make the following assumptions on the coefficients and the initial value. 
 \begin{assumption} \label{as:initial}
  $E|X_0|^p< \infty $ for a fixed constant $p\geq 2$. 
 \end{assumption}
 \begin{assumption} \label{as:b:lip}
 There exist   constants $L>0$  and $\rho>0$ such that, 
\begin{align*}
 \big\langle x-\bar{x}, b(x,\mu)- b(\bar{x},\mu)\big\rangle &\leq L|x-\bar{x}|^2,
 \\
 |b(x,\mu)- b(\bar{x},\bar{\mu})| &\leq L \big\{ (1+|x|+|\bar{x}|)^{\rho/2+1}|x-\bar{x}|+  \mathcal{W}_2(\mu,\bar{\mu}) \big\},
\\
 |b(0,\mu)| &\leq L,
\end{align*}
for all $x,\bar{x} \in\mathbb{R}^d$ and $\mu, \bar{\mu}\in\mathcal{P}_2(\mathbb{R}^d)$.  \end{assumption}
 \begin{assumption}  \label{as:sig:lip}
 There exists a constant $L>0$ such that,
 \begin{align*}
 |\sigma(x,\mu)- \sigma(\bar{x},\bar{\mu})| &\leq L  \big\{|x-\bar{x}|+  \mathcal{W}_2(\mu, \bar{\mu}) \big\}, 
 \\
 |\sigma(0,\mu)| &\leq L,
 \end{align*}
 for all $x, \bar{x} \in\mathbb{R}^d$ and $\mu,\bar{\mu} \in\mathcal{P}_2(\mathbb{R}^d)$.  
 \end{assumption}
\begin{assumption}  \label{as:b:der}
 There exists a constant $L>0$ such that for every $k\in~\{1,\ldots,d\}$, the derivatives $\partial_x b^{(k)}:\mathbb{R}^d\times \mathcal{P}_2(\mathbb{R}^d)\to\mathbb{R}^d$ satisfy, 
 \begin{align*}
| \partial_x b^{(k)}(x,\mu)-\partial_x b^{(k)}(\bar{x},\bar{\mu})| & \leq L\big\{(1+|x|+|\bar{x}|)^{\rho/2}|x-\bar{x}|+ \mathcal{W}_2(\mu,\bar{\mu})\big\},
\\
|\partial_x b^{(k)}(0,\mu)| & \leq L,
 \end{align*}
 and the measure derivatives $\partial_\mu {b^{(k)}}:\mathbb{R}^d \times \mathcal{P}_2{(\mathbb{R}^d)} \times \mathbb{R}^d \to \mathbb{R}^d$  satisfy,
 \begin{align*}
| \partial_\mu b^{(k)}(x,\mu, y)-\partial_\mu b^{(k)}(\bar{x}, \bar{\mu}, \bar{y})| & \leq L\big\{(1+|x|+|\bar{x}|)^{\rho/2+1}|x-\bar{x}|+ \mathcal{W}_2(\mu,\bar{\mu}) +  |y-\bar{y}| \big\},
\\
|\partial_\mu b^{(k)}(0,\mu,0)| &\leq L,
 \end{align*}
 for all $x, y,\bar{x}, \bar{y}\in\mathbb{R}^d$ and $\mu,\bar{\mu} \in\mathcal{P}_2(\mathbb{R}^d)$.  
 \end{assumption}
 \begin{assumption}  \label{as:sig:der}
 There exists a constant $L>0$ such that for every $k\in~\{1,\ldots,d\}$ and $l\in~\{1,\ldots,m\}$, the derivatives $\partial_x \sigma^{(k,l)}: \mathbb{R}^d\times \mathcal{P}_2(\mathbb{R}^d)\to\mathbb{R}^d$ satisfy,
 \begin{align*}
| \partial_x \sigma^{(k,l)}(x,\mu)-\partial_x \sigma^{(k,l)}(\bar{x},\bar{\mu})| & \leq L\big\{|x-\bar{x}| + \mathcal{W}_2(\mu,\bar{\mu}) \big\},
\\
|\partial_x \sigma^{(k,l)}(0,\mu)| &\leq L, 
 \end{align*}
 and the measure derivatives $\partial_\mu {\sigma^{(k,l)}}:\mathbb{R}^d \times \mathcal{P}_2{(\mathbb{R}^d)} \times \mathbb{R}^d \to \mathbb{R}^d$ satisfy,
 \begin{align*}
| \partial_\mu \sigma^{(k,l)}(x,\mu, y)-\partial_\mu \sigma^{(k,l)}(\bar{x}, \bar{\mu}, \bar{y})| & \leq L\big\{|x-\bar{x}|+ \mathcal{W}_2(\mu,\bar{\mu}) +  |y-\bar{y}| \big\},
\\
|\partial_\mu \sigma^{(k,l)}(0,\mu,0)| &\leq L, 
 \end{align*}
 for all $x, y,\bar{x}, \bar{y}\in\mathbb{R}^d$ and $\mu,\bar{\mu} \in\mathcal{P}_2(\mathbb{R}^d)$.  
 \end{assumption}

\subsection{Propagation of Chaos and Interacting Particle System}
For a fixed $N\in\mathbb{N}$, let $\{W^i\}_{i\in\{1,\ldots,N\}}$ be $N$ independent Brownian motions that are also independent of $W$. Consider $N$-dimensional system of interacting particles given by, 
\begin{align}
X_t^{i,N}=X_0^{i}+ \int_0^t b\big(X_s^{i,N}, \mu_s^{X,N}\big) ds + \sum_{l=1}^m \int_0^t \sigma^{(l)}\big(X_s^{i,N}, \mu_s^{X,N}\big) dW_s^{(l),i} \label{eq:interactint:particle}
\end{align}
almost surely for any $t\in[0,T]$ and $i\in\{1,\ldots,N\}$, where 
\begin{align*}
\mu_s^{X,N}:=\frac{1}{N} \sum_{j=1}^{N} \delta_{X_s^{j,N}} 
\end{align*}
for any $s\in[0,T]$. For the propagation of chaos result, consider the system of non-interacting particles given by, 
\begin{align}
X_t^{i}=X_0^{i}+ \int_0^t b\big(X_s^{i}, \mu_s^{X^i}\big) ds + \sum_{l=1}^m \int_0^t \sigma^{(l)}\big(X_s^{i}, \mu_s^{X^i}\big) dW_s^{(l),i} \label{eq:noninteracting:nonparticle}
\end{align}
almost surely for any $t\in[0,T]$ and $i\in\{1,\ldots,N\}$, where $\mu_s^{X^i}=\mu_s^{X}$ for every $i \in \{1,\ldots,N\}$ because $X^i$'s are independent. 

The proof of the following proposition can be found in \cite{reis2019a, reis2019b}.
\begin{prop} \label{prop:chaos}
Let Assumptions \ref{as:initial}, \ref{as:b:lip} and \ref{as:sig:lip} be satisfied. Then, there exists a unique solution to MV-SDE \eqref{eq:MVSDE} and the following holds, 
\begin{align*}
E\sup_{t\in[0,T]} \big|X_t\big|^p \leq K
\end{align*}
where  $K:=K(m,d,L,p,T, E|X_0|^p)>0$ is a constant.  Also, 
 \[
 \sup_{i \in \{1,\ldots,N\}}E\sup_{t\in[0,T]} \big|X_t^i-X_t^{i,N}\big|^2\leq K
\begin{cases}
 N^{-1/2} & \mbox{ if } d<4,
\\
  N^{-1/2} \ln(N) & \mbox{ if } d=4,
\\
  N^{-2/d}  & \mbox{ if } d>4,
\end{cases}
\]
where the constant $K>0$ does not depend on $N$. 
\end{prop}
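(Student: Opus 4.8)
The plan is to treat the two assertions separately, since they rest on different mechanisms. For well-posedness and the moment bound, I would set up a fixed-point argument on the flow of marginals. Fix a continuous flow $t\mapsto\mu_t\in\mathcal{P}_2(\mathbb{R}^d)$ and freeze it inside the coefficients; by Assumptions \ref{as:b:lip} and \ref{as:sig:lip} the decoupled equation becomes a classical It\^o SDE with a one-sided Lipschitz (monotone) drift of polynomial growth and a globally Lipschitz diffusion, for which a unique strong solution $X^\mu$ exists by standard theory. I would then show that the map $\Phi(\mu):=\mathrm{Law}(X^\mu_\cdot)$ has a unique fixed point in $C([0,T];\mathcal{P}_2(\mathbb{R}^d))$ by exploiting the Lipschitz-in-$\mathcal{W}_2$ dependence of $b$ and $\sigma$, Gronwall's lemma, and iteration over short time subintervals; this fixed point is the unique solution of \eqref{eq:MVSDE}.

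For the moment bound I would apply It\^o's formula to $|X_t|^p$. The monotonicity inequality in Assumption \ref{as:b:lip} combined with $|b(0,\mu)|\le L$ gives the coercivity estimate $\langle x,b(x,\mu)\rangle\le K(1+|x|^2)$, so the drift contribution is dominated by $K(1+|X_s|^p)$; the global Lipschitz bound on $\sigma$ together with $|\sigma(0,\mu)|\le L$ and $\mathcal{W}_2(\mu_s^X,\delta_0)^2=E|X_s|^2$ controls the quadratic variation by the same quantity, the cross terms being absorbed by Young's and Jensen's inequalities. After taking suprema, treating the local martingale via the Burkholder--Davis--Gundy inequality, and applying Gronwall's lemma, I obtain $E\sup_{t\le T}|X_t|^p\le K$.

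For the propagation of chaos I would introduce the empirical measure of the non-interacting copies, $\mu_s^{X,N,*}:=\frac1N\sum_{j=1}^N\delta_{X_s^{j}}$, and set $e_t^i:=X_t^i-X_t^{i,N}$. Applying It\^o to $|e_t^i|^2$ and decomposing the drift difference as $[b(X_s^i,\mu_s^X)-b(X_s^{i,N},\mu_s^X)]+[b(X_s^{i,N},\mu_s^X)-b(X_s^{i,N},\mu_s^{X,N})]$, the first bracket paired with $e_s^i$ is bounded by $L|e_s^i|^2$ through monotonicity, while the second is controlled by $L\mathcal{W}_2(\mu_s^X,\mu_s^{X,N})$. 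The triangle inequality splits this into $\mathcal{W}_2(\mu_s^X,\mu_s^{X,N,*})$ plus $\mathcal{W}_2(\mu_s^{X,N,*},\mu_s^{X,N})$, where the index coupling yields $\mathcal{W}_2^2(\mu_s^{X,N,*},\mu_s^{X,N})\le\frac1N\sum_{j=1}^N|e_s^j|^2$. The diffusion difference is handled identically using the global Lipschitz property of $\sigma$ and again BDG. Averaging over $i$ (the system is exchangeable) and invoking Gronwall's inequality then closes the estimate, leaving only the inhomogeneous term $\int_0^T E\,\mathcal{W}_2^2(\mu_s^X,\mu_s^{X,N,*})\,ds$.

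The main obstacle is precisely this last term: it measures the distance between the true marginal $\mu_s^X$ and the empirical measure of $N$ i.i.d.\ draws from it, so its rate is governed by the quantitative Wasserstein law of large numbers of Fournier--Guillin. This is exactly what produces the three dimensional regimes in the statement, including the logarithmic correction at $d=4$. Applying it requires the uniform moment bound from the first part of strictly order greater than $4$, which is why Assumption \ref{as:initial} is posed for a general exponent $p$; substituting this rate back into the Gronwall step delivers the claimed bounds with a constant independent of $N$.
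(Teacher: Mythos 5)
Your outline is correct and coincides with the argument in the references \cite{reis2019a, reis2019b} to which the paper defers this proof: well-posedness by freezing the marginal flow and running a fixed-point/contraction argument in $C([0,T];\mathcal{P}_2(\mathbb{R}^d))$, moments via It\^o's formula with the coercivity inferred from the one-sided Lipschitz condition, and propagation of chaos via the synchronous coupling, the empirical measure of the i.i.d.\ copies, Gronwall, and the Fournier--Guillin rates (which is indeed where the three dimensional regimes and the requirement of moments of order strictly greater than $4$ enter). No substantive deviation from the cited proof.
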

 
\subsection{Explicit Milstein-type Scheme} 
For introducing Milstein-type scheme for MV-SDEs \eqref{eq:MVSDE}, we partition the interval $[0,T]$ into $n$ sub-intervals each of length $h=T/n$ and define $\kappa_n(s):=\lfloor ns \rfloor/ n$ for any $s\in[0,T]$. 
For any $x\in\mathbb{R}^d$ and $\mu\in\mathcal{P}_2(\mathbb{R}^d)$, define 
\begin{align} \label{eq:bn}
b_n\big(x, \mu\big) :=\frac{b\big(x, \mu\big) }{1+n^{-1}|x|^{\rho+2}}  
\end{align}
for every $n\in\mathbb{N}$. 
We propose the following explicit Milstein-type scheme for MV-SDE \eqref{eq:MVSDE}, 
\begin{align} \label{eq:scheme}
X_t^{i,N, n}=X_0^{i}+ \int_0^t b_n\big(X_{\kappa_n(s)}^{i,N, n}, \mu_{\kappa_n(s)}^{X,N,n}\big) ds +\sum_{l=1}^m \int_0^t \tilde{\sigma}^{(l)}\big(s,X_{\kappa_n(s)}^{i,N,n}, \mu_{\kappa_n(s)}^{X,N,n}\big) dW_s^{(l),i}
\end{align}
almost surely for any $t\in[0,T]$ and $n,N\in\mathbb{N}$, where 
\begin{align*}
\mu_{\kappa_n(s)}^{X,N}:=\frac{1}{N} \sum_{j=1}^{N} \delta_{X_{\kappa_n(s)}^{j,N,n}}  
\end{align*}
for any $s\in[0,T]$ and $n,N\in\mathbb{N}$. Also,  for every $l\in\{1,\ldots,m\}$,  $\tilde{\sigma}^{(l)}$ is given by, 
\begin{align} \label{eq:sigma:tilde}
\tilde{\sigma}^{(l)} \big(s,X_{\kappa_n(s)}^{i,N,n}, \mu_{\kappa_n(s)}^{X,N,n}\big)&:=\sigma^{(l)} \big(X_{\kappa_n(s)}^{i,N,n}, \mu_{\kappa_n(s)}^{X,N,n}\big)+\Lambda_1^{(l)} \big(s,X_{\kappa_n(s)}^{i,N,n}, \mu_{\kappa_n(s)}^{X,N,n}\big) \notag
\\
&+\Lambda_2^{(l)} \big(s,X_{\kappa_n(s)}^{i,N,n}, \mu_{\kappa_n(s)}^{X,N,n}\big)
\end{align}
where $\Lambda_1^{(l)} \big(s,X_{\kappa_n(s)}^{i,N,n}, \mu_{\kappa_n(s)}^{X,N,n}\big)$ and $\Lambda_2^{(l)} \big(s,X_{\kappa_n(s)}^{i,N,n}, \mu_{\kappa_n(s)}^{X,N,n}\big)$  are the $l$-th column of $d\times m$-matrices whose $(k,l)$-th elements are respectively given  by,
\begin{align*}
\Lambda_1^{(k,l)}& \big(s,X_{\kappa_n(s)}^{i,N,n}, \mu_{\kappa_n(s)}^{X,N,n}\big)
\\
&:= \Big\langle \partial_x \sigma^{(k,l)} \big(X_{\kappa_n(s)}^{i,N,n}, \mu_{\kappa_n(s)}^{X,N,n}\big), \sum_{l_1=1}^m \int_{\kappa_n(s)}^s \sigma^{(l_1)} \big(X_{\kappa_n(r)}^{i,N,n}, \mu_{\kappa_n(r)}^{X,N,n}\big) dW_r^{(l_1),i} \Big\rangle
\\
& = \sum_{l_1=1}^m \int_{\kappa_n(s)}^s  \Big\langle \partial_x \sigma^{(k,l)} \big(X_{\kappa_n(s)}^{i,N,n}, \mu_{\kappa_n(s)}^{X,N,n}\big), \sigma^{(l_1)} \big(X_{\kappa_n(r)}^{i,N,n}, \mu_{\kappa_n(r)}^{X,N,n}\big) \Big\rangle dW_r^{(l_1),i}, 
\\
\Lambda_2^{(k,l)}  & \big(s,X_{\kappa_n(s)}^{i,N,n}, \mu_{\kappa_n(s)}^{X,N,n}\big)
\\
&:= \frac{1}{N}\sum_{j=1}^N  \Big\langle \partial_\mu \sigma^{(k,l)} \big( X_{\kappa_n(s)}^{i,N,n}, \mu_{\kappa_{n}(s)}^{X,N,n},  X_{\kappa_n(s)}^{j,N,n} \big), \sum_{l_1=1}^m \int_{\kappa_n(s)}^s \sigma^{(l_1)} \big(X_{\kappa_n(r)}^{j,N,n}, \mu_{\kappa_n(r)}^{X,N,n}\big) dW_r^{(l_1),j} \Big\rangle
\\
& =  \frac{1}{N}\sum_{j=1}^N  \sum_{l_1=1}^m \int_{\kappa_n(s)}^s  \Big\langle \partial_\mu \sigma^{(k,l)} \big( X_{\kappa_n(s)}^{i,N,n}, \mu_{\kappa_{n}(s)}^{X,N,n},  X_{\kappa_n(s)}^{j,N,n} \big), \sigma^{(l_1)} \big(X_{\kappa_n(r)}^{j,N,n}, \mu_{\kappa_n(r)}^{X,N,n}\big) \Big\rangle dW_r^{(l_1),j} 
\end{align*}
for any $s\in[0,T]$ and $n, N \in\mathbb{N}$ and for every $k\in\{1,\ldots,d\}$ and $l\in\{1,\ldots,m\}$. 
The following proposition and theorem are the main results of this article. 
\begin{prop} \label{prop:milstein}
Let Assumptions \ref{as:initial} to \ref{as:sig:der} be satisfied. Then, the Milstein-type scheme \eqref{eq:scheme} converges to the interacting particle system \eqref{eq:interactint:particle} with the rate of convergence given by, 
\begin{align*}
\sup_{i\in\{1,\ldots,N\}} \sup_{t\in[0,T]} E\big|X_t^{i,N}-X_t^{i,N, n}\big|^2 \leq K n^{-2}
\end{align*}
for any $n, N\in\mathbb{N}$ where the constant $K>0$ does not depend on $n,N\in\mathbb{N}$. 
\end{prop}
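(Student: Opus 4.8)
The plan is to control $g_i(t):=E|e_t^i|^2$ for the error process $e_t^i:=X_t^{i,N}-X_t^{i,N,n}$, uniformly in $i$, by deriving a Gr\"onwall-type integral inequality. Before that I would establish two preliminary ingredients (as lemmas). First, moment bounds for the scheme, $\sup_{n,N}\sup_{i}\sup_{t\le T}E|X_t^{i,N,n}|^p\le K$ for $p$ large enough relative to $\rho$; these follow from Assumption~\ref{as:initial} and the taming factor $1+n^{-1}|x|^{\rho+2}$ in \eqref{eq:bn}, which neutralises the super-linear growth of $b$. Second, one-step regularity bounds $E|X_s^{i,N,n}-X_{\kappa_n(s)}^{i,N,n}|^{2q}\le Kh^q$ with $h:=T/n$, together with the analogous bounds for $X^{i,N}$ coming from Proposition~\ref{prop:chaos}. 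With these in hand I would apply the classical (finite-dimensional) It\^o formula to $|e_t^i|^2$ and take expectations, so that the stochastic integrals drop out and $g_i(t)$ equals the drift contribution $2\int_0^t E\langle e_s^i,\,b(X_s^{i,N},\mu_s^{X,N})-b_n(X_{\kappa_n(s)}^{i,N,n},\mu_{\kappa_n(s)}^{X,N,n})\rangle\,ds$ plus the It\^o-correction (diffusion consistency) term $\sum_l\int_0^t E|\sigma^{(l)}(X_s^{i,N},\mu_s^{X,N})-\tilde\sigma^{(l)}(s,X_{\kappa_n(s)}^{i,N,n},\mu_{\kappa_n(s)}^{X,N,n})|^2\,ds$.

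For the drift contribution I would insert the intermediate values $b(X_s^{i,N,n},\mu_s^{X,N,n})$ and $b(X_{\kappa_n(s)}^{i,N,n},\mu_{\kappa_n(s)}^{X,N,n})$ and split into four pieces. The monotone piece is bounded by $L|e_s^i|^2$ using the one-sided Lipschitz bound of Assumption~\ref{as:b:lip}; the measure piece is bounded by $L|e_s^i|\,\mathcal{W}_2(\mu_s^{X,N},\mu_s^{X,N,n})$ and then, via Young's inequality and $\mathcal{W}_2(\mu_s^{X,N},\mu_s^{X,N,n})^2\le\frac1N\sum_j|e_s^j|^2$, by absorbable multiples of $E|e_s^i|^2$ and $\frac1N\sum_j E|e_s^j|^2$; the taming piece $b-b_n$ is of order $n^{-1}$ times a polynomial in $|x|$ (using $|b(x,\mu)|\le K(1+|x|^{\rho/2+2})$ from Assumption~\ref{as:b:lip}), hence $O(h^2)$ after the moment bounds and Young. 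The delicate piece is the drift consistency $\langle e_s^i,\,b(X_s^{i,N,n},\mu_s^{X,N,n})-b(X_{\kappa_n(s)}^{i,N,n},\mu_{\kappa_n(s)}^{X,N,n})\rangle$, for which a naive estimate yields only order $h$. Here I would Taylor-expand the difference using the $C^1$ regularity (Assumption~\ref{as:b:der}) and split $e_s^i=e_{\kappa_n(s)}^i+(e_s^i-e_{\kappa_n(s)}^i)$: the term pairing the $\mathcal{F}_{\kappa_n(s)}$-measurable factor $e_{\kappa_n(s)}^i$ with the conditionally mean-zero martingale part of the increment $X_s^{i,N,n}-X_{\kappa_n(s)}^{i,N,n}$ vanishes in expectation, while the remaining cross terms are $O(h^2)$ up to absorbable multiples of the errors.

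The core of the argument is the diffusion consistency. To reach order $h^2$ (and hence rate one) I would \emph{not} estimate $\sigma^{(l)}(X_s^{i,N})-\sigma^{(l)}(X_{\kappa_n(s)}^{i,N})$ crudely, since that loses a factor $h^{1/2}$; instead I would expand $\sigma^{(l)}(X_s^{i,N},\mu_s^{X,N})$ by It\^o-Taylor. Representing the empirical-measure dependence through the finite-dimensional projection $(x_1,\dots,x_N)\mapsto\sigma^{(k,l)}(x_i,\frac1N\sum_j\delta_{x_j})$, whose $x_j$-derivative equals $\frac1N\partial_\mu\sigma^{(k,l)}(x_i,\mu,x_j)$ off the diagonal with the extra $\partial_x\sigma^{(k,l)}(x_i,\mu)$ appearing on the diagonal, the classical It\^o formula produces exactly the two martingale contributions $\langle\partial_x\sigma^{(k,l)},\sigma^{(l_1)}\rangle$ and $\frac1N\sum_j\langle\partial_\mu\sigma^{(k,l)}(\cdot,\cdot,X^{j}),\sigma^{(l_1)}(X^{j})\rangle$ that the correction terms $\Lambda_1^{(l)}$ and $\Lambda_2^{(l)}$ in \eqref{eq:sigma:tilde} are designed to reproduce. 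Subtracting the scheme's frozen-coefficient versions, $\sigma^{(l)}(X_s^{i,N})-\tilde\sigma^{(l)}$ reduces to (i) a coefficient difference $\sigma^{(l)}(X_{\kappa_n(s)}^{i,N})-\sigma^{(l)}(X_{\kappa_n(s)}^{i,N,n})$, controlled by $|e_{\kappa_n(s)}|$ and the Wasserstein error; (ii) differences of the Milstein iterated integrals along the true path versus the scheme, which are $O(|e_{\kappa_n(s)}|\,h^{1/2}+h)$ in $L^2$ by the Lipschitz bounds of Assumption~\ref{as:sig:der}; and (iii) a genuine It\^o-Taylor remainder that is $O(h)$ in $L^2$. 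Squaring and integrating over one step then gives a contribution of order $h^2$ plus absorbable multiples of $\sup_j E|e_s^j|^2$ and $\sup_j E|e_{\kappa_n(s)}^j|^2$.

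Collecting the estimates, taking the supremum over $i$, and using $\frac1N\sum_j E|e_s^j|^2\le\sup_i E|e_s^i|^2$, I obtain $G(t)\le K\int_0^t\sup_{u\le s}G(u)\,ds+Kh^2$ for $G(t):=\sup_i E|e_t^i|^2$, whence Gr\"onwall's inequality yields $G(T)\le Kh^2=Kn^{-2}$ uniformly in $N$. The main obstacle, and where I expect the genuinely new work to lie, is the diffusion-consistency step for the \emph{measure} derivative, namely matching the $\Lambda_2$ correction with the $\partial_\mu\sigma$ part of the It\^o-Taylor expansion of $\sigma(X_s^{i,N},\mu_s^{X,N})$ and controlling the resulting double sum over particles \emph{without} invoking an It\^o formula for functions of a measure argument; the bookkeeping of the cross-particle terms, the requirement to keep every remainder at order $h$ rather than the naive $h^{1/2}$ so as not to degrade the rate, and the super-linear growth of $b$ and $\partial_\mu b$ (forcing high-order moment bounds throughout) are the principal technical difficulties.
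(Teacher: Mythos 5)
Your overall architecture coincides with the paper's: It\^o's formula applied to $|X_t^{i,N}-X_t^{i,N,n}|^2$, a three-way splitting of the drift (one-sided Lipschitz part, Wasserstein part bounded by $\frac1N\sum_j E|e_s^j|^2$, consistency part), the same splitting for the diffusion, the zero-expectation trick pairing the $\mathcal{F}_{\kappa_n(s)}$-measurable error $e^i_{\kappa_n(s)}$ against the conditionally centred martingale increment in the drift-consistency term, and a Gr\"onwall closure. The moment bounds and one-step estimates you list as preliminaries are exactly the paper's Lemmas \ref{lem:moment:bound}, \ref{lem:one:step} and Corollaries \ref{cor:one:step:true}, \ref{cor:one:step}.

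The genuine gap is in your treatment of the diffusion consistency. You propose to expand $\sigma^{(k,l)}(X_s^{i,N},\mu_s^{X,N})$ by ``the classical It\^o formula'' applied to the finite-dimensional projection $(x_1,\dots,x_N)\mapsto\sigma^{(k,l)}(x_i,\frac1N\sum_j\delta_{x_j})$ along the \emph{true} particle system, and to read off $\Lambda_1,\Lambda_2$ as the resulting martingale terms plus an ``It\^o--Taylor remainder that is $O(h)$''. It\^o's formula for that composite map requires second derivatives of $\sigma$ in the space variable and in the measure (including mixed ones), which Assumptions \ref{as:sig:lip} and \ref{as:sig:der} do not provide: only first derivatives exist and they are merely Lipschitz. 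Indeed the paper advertises precisely that the rate $1$ is obtained \emph{without} an It\^o formula for measure-dependent functions. Its substitute is a pathwise mean-value expansion in the joint (space, Lions) sense (Lemma \ref{lem:MVT}, Corollary \ref{cor:MVT}, Lemmas \ref{lem:f:rate}--\ref{lem:f:rate:local}), whose quadratic remainder is controlled by the Lipschitz continuity of $\partial_x\sigma$ and $\partial_\mu\sigma$ alone; moreover the expansion is performed between $X_{\kappa_n(s)}^{i,N,n}$ and $X_s^{i,N,n}$ along the \emph{scheme}, so that substituting the scheme's own integral representation of the one-step increment makes $\Lambda_1$ and $\Lambda_2$ cancel the leading martingale terms \emph{exactly} (see equations \eqref{eq:sigma:1}--\eqref{eq:sigma:3} in the proof of Lemma \ref{lem:sigma:rate}), with no residual comparison of iterated integrals along true versus approximate paths. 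The difference $\sigma(X_s^{i,N},\mu_s^{X,N})-\sigma(X_s^{i,N,n},\mu_s^{X,N,n})$ is then disposed of separately by the plain Lipschitz bounds and absorbed into Gr\"onwall. Your route could be repaired by replacing the It\^o--Taylor step with this mean-value expansion (and by expanding along the scheme rather than the true path), but as written the $C^2$ requirement is a real obstruction under the stated hypotheses.
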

By combining Propositions \ref{prop:chaos} and \ref{prop:milstein}, we obtain the following theorem. 
\begin{thm}
Let Assumptions \ref{as:initial} to \ref{as:sig:der} be satisfied. Then, the Milstein-type scheme \eqref{eq:scheme} converges to the true solution of MV-SDE \eqref{eq:MVSDE} with the rate of convergence given by, 
\[
 \sup_{i \in \{1,\ldots,N\}} \sup_{t\in[0,T]}  E\big|X_t^i-X_t^{i,N,n}\big|^2\leq K
\begin{cases}
 N^{-1/2}+ n^{-2} & \mbox{ if } d<4,
\\
  N^{-1/2} \ln(N) + n^{-2}& \mbox{ if } d=4,
\\
  N^{-2/d} + n^{-2} & \mbox{ if } d>4,
\end{cases}
\]
for any $n, N\in\mathbb{N}$ where constant $K>0$ does not depend on $n, N\in\mathbb{N}$. 
\end{thm}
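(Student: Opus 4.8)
The plan is to obtain the stated bound by a straightforward triangle-inequality argument that interpolates between the non-interacting particle $X^i$ from \eqref{eq:noninteracting:nonparticle} and the scheme $X^{i,N,n}$ from \eqref{eq:scheme} through the interacting particle system $X^{i,N}$ from \eqref{eq:interactint:particle}, after which the two required estimates are supplied directly by Propositions \ref{prop:chaos} and \ref{prop:milstein}.

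First, I would fix $i\in\{1,\ldots,N\}$ and $t\in[0,T]$ and insert $X_t^{i,N}$, writing the difference as $X_t^i - X_t^{i,N,n} = (X_t^i - X_t^{i,N}) + (X_t^{i,N} - X_t^{i,N,n})$. Applying the elementary inequality $|a+b|^2\le 2|a|^2+2|b|^2$ and taking expectations gives
\begin{align*}
E\big|X_t^i - X_t^{i,N,n}\big|^2 \le 2E\big|X_t^i - X_t^{i,N}\big|^2 + 2E\big|X_t^{i,N} - X_t^{i,N,n}\big|^2.
\end{align*}
Next, I would take the supremum over $t\in[0,T]$ and then over $i\in\{1,\ldots,N\}$ on both sides, using subadditivity of the supremum. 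For the second term, Proposition \ref{prop:milstein} bounds $\sup_i\sup_t E\big|X_t^{i,N}-X_t^{i,N,n}\big|^2$ by $Kn^{-2}$. For the first term, I would observe that $\sup_t E|\cdot|^2 \le E\sup_t|\cdot|^2$, so that Proposition \ref{prop:chaos} yields the propagation-of-chaos rate ($N^{-1/2}$, $N^{-1/2}\ln N$, or $N^{-2/d}$ according to whether $d<4$, $d=4$, or $d>4$) as an upper bound for $\sup_i\sup_t E\big|X_t^i - X_t^{i,N}\big|^2$. Combining the two estimates and absorbing the factors of $2$ into a new generic constant $K$ produces exactly the three-case bound in the statement.

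In this case there is no genuine obstacle to overcome: all of the analytic difficulty is already contained in the two propositions, the first of which rests on the well-posedness and moment estimates for the McKean-Vlasov equation together with a quantitative propagation-of-chaos result from \cite{reis2019a, reis2019b}, and the second of which is the Milstein-rate estimate established for the scheme. The only point requiring a moment of care is the interchange between supremum and expectation, which goes through harmlessly because Proposition \ref{prop:chaos} controls the stronger quantity $E\sup_t|\cdot|^2$, dominating the $\sup_t E|\cdot|^2$ that appears after the triangle inequality. Since the constants furnished by both propositions are independent of $n$ and $N$, the resulting constant $K$ inherits this independence, as claimed.
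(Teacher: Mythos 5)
Your proposal is correct and is precisely the argument the paper intends: the theorem is stated immediately after the line ``By combining Propositions \ref{prop:chaos} and \ref{prop:milstein}, we obtain the following theorem,'' so the triangle inequality through $X_t^{i,N}$ together with the two propositions is exactly the paper's (implicit) proof. Your added remark about $\sup_t E|\cdot|^2 \le E\sup_t|\cdot|^2$ correctly handles the only minor mismatch between the statements of the propositions and the theorem.
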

 We conclude this section by listing following remarks which are consequences of the assumptions mentioned above. 
 \begin{rem} \label{rem:b:growth}
 From  Assumptions \ref{as:b:lip} and \ref{as:b:der},
 \begin{align*}
 \langle x, b(x, \mu)  \rangle & \leq K(1+|x|)^2, 
 \\
 |b(x,\mu)| & \leq K(1+|x|)^{\rho/2+2},
 \\
 |\partial_x b^{(k)}(x,\mu)| & \leq K(1+|x|)^{\rho/2+1},
 \\
 |\partial_\mu b^{(k)}(x,\mu,y)| & \leq K \big\{(1+|x|)^{\rho/2+2}+ (1+|y|)\big\},
\end{align*}  
for any $x, y \in\mathbb{R}^d$, $\mu\in\mathcal{P}_2(\mathbb{R}^d)$ and  $k\in\{1,\ldots,d\}$. 
 \end{rem}
 \begin{rem} \label{rem:bn:growth}
 From Remark \ref{rem:b:growth} and equation \eqref{eq:bn}, 
 \begin{align*}
 |b_n(x,\mu)| & \leq K \min\big\{n^{1/2}(1+|x|),  (1+|x|)^{\rho/2+2}\big\}
 \end{align*}
 for all $x\in\mathbb{R}^d$, $\mu\in\mathcal{P}_2(\mathbb{R}^d)$ and $n\in\mathbb{N}$ where the constant $K>0$ does not depend on $n\in\mathbb{N}$. 
 \end{rem}
 \begin{rem} \label{rem:sigma:growth}
 From Assumptions \ref{as:sig:lip} and \ref{as:sig:der}, 
 \begin{align*}
 |\sigma(x,\mu)| & \leq K (1+|x|),
 \\
 |\partial_x \sigma^{(k,l)}(x,\mu)| & \leq K,
 \\
  |\partial_\mu \sigma^{(k,l)}(x,\mu,y)| & \leq K,
 \end{align*}
 for any $x,y\in\mathbb{R}^d$, $\mu\in\mathcal{P}_2(\mathbb{R}^d)$, $k\in\{1,\ldots,d\}$  and  $l\in\{1,\ldots,m\}$. 
 \end{rem}
\section{Moment Bounds}
Before establishing the moment bound of the Milstein-type scheme \eqref{eq:scheme} in Lemma~\ref{lem:moment:bound}, we first establish following lemmas and corollaries. 
\begin{lem} \label{lem:lambda_1}
Let Assumptions  \ref{as:sig:lip} and \ref{as:sig:der} be satisfied. Then, 
\begin{align*}
E|\Lambda_1^{(u,v)} \big(s,X_{\kappa_n(s)}^{i,N,n}, \mu_{\kappa_n(s)}^{X,N,n}\big)|^p \leq K n^{-\frac{p}{2}} E (1+|X_{\kappa_n(s)}^{i,N,n}|\big)^p
\end{align*}
for any $s\in[0,T]$, $i\in\{1,\cdots,N\}$, $n, \, N\in\mathbb{N}$, $u\in\{1,\ldots,d\}$ and $v\in\{1,\ldots,m\}$, where  constant $K>0$ does not depend on $n$ and $N$. 
\end{lem}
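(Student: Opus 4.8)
The plan is to recognise $\Lambda_1^{(u,v)}$ as a sum of It\^o integrals over the short interval $[\kappa_n(s),s]$ and to estimate its $p$-th moment via the Burkholder--Davis--Gundy (BDG) inequality, after which the boundedness of $\partial_x\sigma$ and the linear growth of $\sigma$ recorded in Remark~\ref{rem:sigma:growth} will close the argument. This is, in effect, a one-step local estimate, so no delicate global machinery is needed.

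I would first fix $u\in\{1,\ldots,d\}$ and $v\in\{1,\ldots,m\}$ and use the stochastic-integral form of $\Lambda_1^{(u,v)}$,
\begin{align*}
\Lambda_1^{(u,v)}\big(s,X_{\kappa_n(s)}^{i,N,n}, \mu_{\kappa_n(s)}^{X,N,n}\big)
= \sum_{l_1=1}^m \int_{\kappa_n(s)}^s \Big\langle \partial_x \sigma^{(u,v)} \big(X_{\kappa_n(s)}^{i,N,n}, \mu_{\kappa_n(s)}^{X,N,n}\big), \sigma^{(l_1)} \big(X_{\kappa_n(r)}^{i,N,n}, \mu_{\kappa_n(r)}^{X,N,n}\big) \Big\rangle dW_r^{(l_1),i}.
\end{align*}
The crucial simplification is that for every $r\in[\kappa_n(s),s]$ one has $\kappa_n(r)=\kappa_n(s)$, so the integrand is $\mathcal{F}_{\kappa_n(s)}$-measurable and constant in $r$; in particular $\sigma^{(l_1)}(X_{\kappa_n(r)}^{i,N,n},\mu_{\kappa_n(r)}^{X,N,n})=\sigma^{(l_1)}(X_{\kappa_n(s)}^{i,N,n},\mu_{\kappa_n(s)}^{X,N,n})$. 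This places $\Lambda_1^{(u,v)}$ in the form of a genuine stochastic integral with an adapted integrand, to which BDG applies cleanly.

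Then I would exploit the independence of the $W^{(l_1),i}$, $l_1\in\{1,\ldots,m\}$, so that the quadratic variation of $r\mapsto\Lambda_1^{(u,v)}$ is $\sum_{l_1=1}^m\int_{\kappa_n(s)}^s|\langle\partial_x\sigma^{(u,v)},\sigma^{(l_1)}\rangle|^2\,dr$, whence BDG gives
\begin{align*}
E\big|\Lambda_1^{(u,v)}\big|^p \leq K\, E\Big(\sum_{l_1=1}^m\int_{\kappa_n(s)}^s\big|\big\langle \partial_x \sigma^{(u,v)} \big(X_{\kappa_n(s)}^{i,N,n}, \mu_{\kappa_n(s)}^{X,N,n}\big), \sigma^{(l_1)} \big(X_{\kappa_n(s)}^{i,N,n}, \mu_{\kappa_n(s)}^{X,N,n}\big)\big\rangle\big|^2\,dr\Big)^{p/2}.
\end{align*}
By Cauchy--Schwarz together with the bounds $|\partial_x\sigma^{(u,v)}|\leq K$ and $|\sigma^{(l_1)}(x,\mu)|\leq K(1+|x|)$ from Remark~\ref{rem:sigma:growth}, the integrand is at most $K(1+|X_{\kappa_n(s)}^{i,N,n}|)^2$; since it is constant in $r$ and $[\kappa_n(s),s]$ has length at most $n^{-1}$, the bracketed expression is bounded by $K n^{-1}(1+|X_{\kappa_n(s)}^{i,N,n}|)^2$. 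Raising to the power $p/2$ and taking expectations then yields $E|\Lambda_1^{(u,v)}|^p\leq K n^{-p/2}E(1+|X_{\kappa_n(s)}^{i,N,n}|)^p$, as claimed.

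I do not anticipate a genuine obstacle here. The two points that merit care are verifying $\kappa_n(r)=\kappa_n(s)$ on the relevant subinterval — which is what makes the integrand adapted to $\mathcal{F}_{\kappa_n(s)}$ and lets the short time horizon produce the factor $n^{-p/2}$ — and tracking that it is precisely the \emph{boundedness} of $\partial_x\sigma$ (in contrast to the super-linear behaviour allowed for $\partial_x b$) that prevents any power of $|X_{\kappa_n(s)}^{i,N,n}|$ beyond the first from appearing, leaving the right-hand side with exactly the moment of order $p$ asserted in the statement.
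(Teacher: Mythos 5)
Your argument is correct and is essentially the paper's proof: both apply the Burkholder--Davis--Gundy inequality to the stochastic-integral form of $\Lambda_1^{(u,v)}$ and then invoke the boundedness of $\partial_x\sigma^{(u,v)}$ and the linear growth of $\sigma$ from Remark~\ref{rem:sigma:growth} to extract the factor $n^{-p/2}$ from the short integration interval. The only cosmetic difference is that you observe the integrand is constant on $[\kappa_n(s),s]$ (since $\kappa_n(r)=\kappa_n(s)$ there), whereas the paper keeps $\kappa_n(r)$ and uses H\"older's inequality to pass the power $p$ inside the time integral; both routes yield the identical bound.
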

\begin{proof}
By  Cauchy-Schwarz inequality and  Burkholder-Gundy-Davis inequality,  
\begin{align*} 
E|& \Lambda_1^{(u,v)}  \big(s,X_{\kappa_n(s)}^{i,N,n}, \mu_{\kappa_n(s)}^{X,N,n}\big)|^p 
\\
&=E \Big| \sum_{l_1=1}^m \int_{\kappa_n(s)}^s \Big\langle \partial_x \sigma^{(u,v)} \big(X_{\kappa_n(s)}^{i,N,n}, \mu_{\kappa_n(s)}^{X,N,n}\big),  \sigma^{(l_1)} \big(X_{\kappa_n(r)}^{i,N,n}, \mu_{\kappa_n(r)}^{X,N,n}\big)  \Big\rangle dW_r^{(l_1),i} \Big|^p
\\
&\leq K n^{-\frac{p}{2}+1}  E  \sum_{l_1=1}^m \int_{\kappa_n(s)}^s \big| \partial_x \sigma^{(u,v)} \big(X_{\kappa_n(s)}^{i,N,n}, \mu_{\kappa_n(s)}^{X,N,n}\big)\big|^p \big| \sigma^{(l_1)} \big(X_{\kappa_n(r)}^{i,N,n}, \mu_{\kappa_n(r)}^{X,N,n}\big)  \big|^p dr 
\end{align*}
and then the application of Remark \ref{rem:sigma:growth} completes the proof. 
\end{proof}
\begin{lem} \label{lem:lambda2}
Let Assumptions \ref{as:sig:lip} and \ref{as:sig:der} be satisfied. Then, 
\begin{align*}
E|\Lambda_2^{(u,v)} \big(s,X_{\kappa_n(s)}^{i,N,n}, \mu_{\kappa_n(s)}^{X,N,n}\big)|^p \leq K n^{-\frac{p}{2}} \frac{1}{N}    \sum_{j=1}^N    E\big(1+|X_{\kappa_n(s)}^{j,N,n}|\big)^p  
\end{align*}
for any $s\in[0,T]$, $i\in\{1,\cdots,N\}$, $n, \, N\in\mathbb{N}$, $u\in\{1,\ldots,d\}$ and $v\in\{1,\ldots,m\}$, where  constant $K>0$ does not depend on $n$ and $N$.
\end{lem}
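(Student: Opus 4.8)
The plan is to mirror the argument of Lemma \ref{lem:lambda_1}, the only genuinely new feature being the empirical average $\frac{1}{N}\sum_{j=1}^N$ sitting in front of the stochastic integrals. First I would write $\Lambda_2^{(u,v)}=\frac{1}{N}\sum_{j=1}^N M_j$, where
\[
M_j:=\sum_{l_1=1}^m\int_{\kappa_n(s)}^s \Big\langle \partial_\mu\sigma^{(u,v)}\big(X_{\kappa_n(s)}^{i,N,n},\mu_{\kappa_n(s)}^{X,N,n},X_{\kappa_n(s)}^{j,N,n}\big),\sigma^{(l_1)}\big(X_{\kappa_n(r)}^{j,N,n},\mu_{\kappa_n(r)}^{X,N,n}\big)\Big\rangle\,dW_r^{(l_1),j}.
\]
Since $p\geq 2$, the map $t\mapsto|t|^p$ is convex, so Jensen's inequality applied to the average over $j$ gives $|\Lambda_2^{(u,v)}|^p\leq \frac{1}{N}\sum_{j=1}^N|M_j|^p$; taking expectations reduces the claim to estimating $E|M_j|^p$ for each fixed $j$. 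This is the right first move: it produces exactly the averaged shape $\frac{1}{N}\sum_j E(\cdots)$ of the target and sidesteps any need to exploit the mutual independence of the $W^{(l_1),j}$.

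For the single-index estimate I would treat $M_j$ as a stochastic integral against $W^{(l_1),j}$ and apply the Burkholder-Davis-Gundy inequality followed by the Cauchy-Schwarz (H\"older) inequality in the time variable, precisely as in the proof of Lemma \ref{lem:lambda_1}, obtaining
\[
E|M_j|^p\leq K\,n^{-\frac{p}{2}+1}\,E\sum_{l_1=1}^m\int_{\kappa_n(s)}^s\big|\partial_\mu\sigma^{(u,v)}(\cdots)\big|^p\,\big|\sigma^{(l_1)}(\cdots)\big|^p\,dr,
\]
where I have also used Cauchy-Schwarz on the inner product to split $|\langle\partial_\mu\sigma^{(u,v)},\sigma^{(l_1)}\rangle|\leq|\partial_\mu\sigma^{(u,v)}|\,|\sigma^{(l_1)}|$. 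At this point I would invoke Remark \ref{rem:sigma:growth}, namely $|\partial_\mu\sigma^{(u,v)}(x,\mu,y)|\leq K$ and $|\sigma^{(l_1)}(x,\mu)|\leq K(1+|x|)$, together with the observation that $\kappa_n(r)=\kappa_n(s)$ for every $r\in[\kappa_n(s),s]$, so that the diffusion argument is frozen at $X_{\kappa_n(s)}^{j,N,n}$. The remaining time integral has length $s-\kappa_n(s)\leq n^{-1}$, which supplies one further factor $n^{-1}$ and yields $E|M_j|^p\leq K\,n^{-\frac{p}{2}}\,E\big(1+|X_{\kappa_n(s)}^{j,N,n}|\big)^p$. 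Summing over $j$ and dividing by $N$ then gives the stated bound.

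I do not expect a serious obstacle, since the statement is a direct analogue of Lemma \ref{lem:lambda_1} and every ingredient (Burkholder-Davis-Gundy, Cauchy-Schwarz, Remark \ref{rem:sigma:growth}) is already in hand. The only points requiring care are the ordering of the two inequalities, that is, applying Jensen to the outer $\frac{1}{N}\sum_j$ \emph{before} invoking the martingale inequality on each $M_j$, since this is exactly what forces the bound into the averaged form $\frac{1}{N}\sum_j E(1+|X_{\kappa_n(s)}^{j,N,n}|)^p$ rather than a cruder estimate; and keeping track of the fact that the relevant diffusion integrand now depends on the summation index $X^{j,N,n}$ rather than on $X^{i,N,n}$.
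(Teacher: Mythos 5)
Your argument is correct and is essentially the paper's own proof: the paper likewise bounds the $\frac{1}{N}\sum_j$ average and the sum over $l_1$ by convexity/Cauchy--Schwarz, applies the Burkholder--Davis--Gundy and H\"older inequalities to each stochastic integral to produce the factor $n^{-p/2+1}$ times a time integral of $|\partial_\mu\sigma^{(u,v)}|^p|\sigma^{(l_1)}|^p$, and then concludes with Remark \ref{rem:sigma:growth} and the fact that the integration interval has length at most $n^{-1}$ with the integrand frozen at $X_{\kappa_n(s)}^{j,N,n}$. Your explicit flagging of the order of operations (Jensen on the $j$-average before the martingale inequality) is exactly the step the paper performs implicitly.
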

\begin{proof}
On using Cauchy-Schwarz inequality and Burkholder-Gundy-Davis inequality, 
\begin{align*}
E&|\Lambda_2^{(u,v)} \big(s,X_{\kappa_n(s)}^{i,N,n}, \mu_{\kappa_n(s)}^{X,N,n}\big)|^p
\\
& = E \Big|\frac{1}{N}\sum_{j=1}^N   \sum_{l_1=1}^m \int_{\kappa_n(s)}^s \Big\langle \partial_\mu \sigma^{(u,v)}(X_{\kappa_n(s)}^{i,N,n}, \mu_{\kappa_{n}(s)}^{X,N,n}, X_{\kappa_n(s)}^{j,N,n}), \sigma^{(l_1)} \big(X_{\kappa_n(r)}^{j,N,n}, \mu_{\kappa_n(r)}^{X,N,n}\big) \Big\rangle dW_r^{(l_1),j}  \Big|^p
\\
& \leq Kn^{-\frac{p}{2}+1}  E \frac{1}{N}\sum_{j=1}^N   \sum_{l_1=1}^m \int_{\kappa_n(s)}^s \big|\partial_\mu \sigma^{(u,v)}(X_{\kappa_n(s)}^{i,N,n}, \mu_{\kappa_{n}(s)}^{X,N,n}, X_{\kappa_n(s)}^{j,N,n})\big|^p \big| \sigma^{(l_1)} \big(X_{\kappa_n(r)}^{j,N,n}, \mu_{\kappa_n(r)}^{X,N,n}\big) \big|^p dr  
\end{align*}
and then the proof is completed by  using Remark \ref{rem:sigma:growth}. 
\end{proof}
As a consequence of Remark \ref{rem:sigma:growth}, Lemma \ref{lem:lambda_1} and Lemma \ref{lem:lambda2}, one obtains the following corollary. 
\begin{cor} \label{cor:sigma:tilde}
Let Assumptions \ref{as:sig:lip} and \ref{as:sig:der} be satisfied. Then,
\begin{align*}
E\big|\tilde{\sigma}  \big(s,X_{\kappa_n(s)}^{i,N,n}, & \mu_{\kappa_n(s)}^{X,N,n}\big)\big|^p \leq K  E \big(1+|X_{\kappa_n(s)}^{i,N,n}|\big)^p + K n^{-\frac{p}{2}} E \big(1+|X_{\kappa_n(s)}^{i,N,n}|\big)^p 
\\
& + K n^{-\frac{p}{2}} \frac{1}{N}    \sum_{j=1}^N    E\big(1+|X_{\kappa_n(s)}^{j,N,n}|\big)^p
\end{align*}
for any $s\in[0,T]$,  $i\in\{1,\cdots,N\}$ and $n, \, N\in\mathbb{N}$ where the  constant $K>0$ does not depend on $n$ and $N$. 
\end{cor}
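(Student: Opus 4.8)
The plan is to exploit the decomposition $\tilde{\sigma} = \sigma + \Lambda_1 + \Lambda_2$ given in \eqref{eq:sigma:tilde} and simply assemble three pieces, each of which has already been controlled. First I would apply the elementary inequality $|a+b+c|^p \leq 3^{p-1}(|a|^p + |b|^p + |c|^p)$, valid for $p\geq 1$ by convexity of $t\mapsto t^p$, to the Hilbert--Schmidt norm of $\tilde{\sigma}$, obtaining
\[
E\big|\tilde{\sigma}(\cdots)\big|^p \leq 3^{p-1}\big( E|\sigma(\cdots)|^p + E|\Lambda_1(\cdots)|^p + E|\Lambda_2(\cdots)|^p\big),
\]
where $(\cdots)$ abbreviates the common arguments $(s,X_{\kappa_n(s)}^{i,N,n}, \mu_{\kappa_n(s)}^{X,N,n})$. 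For the first term I would invoke the growth bound $|\sigma(x,\mu)| \leq K(1+|x|)$ from Remark \ref{rem:sigma:growth}, which immediately yields $E|\sigma(\cdots)|^p \leq K\, E(1+|X^{i,N,n}_{\kappa_n(s)}|)^p$, the leading term of the claimed estimate.

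The remaining two terms are where the earlier lemmas enter, but a small preliminary step is needed because Lemmas \ref{lem:lambda_1} and \ref{lem:lambda2} bound the scalar components $\Lambda_1^{(u,v)}$ and $\Lambda_2^{(u,v)}$, whereas the statement involves the full matrix norm. Since $\Lambda_1$ is a $d\times m$ matrix and $|\cdot|$ denotes the Hilbert--Schmidt norm,
\[
|\Lambda_1|^p = \big(\textstyle\sum_{u,v}|\Lambda_1^{(u,v)}|^2\big)^{p/2} \leq K_{d,m}\sum_{u=1}^d\sum_{v=1}^m |\Lambda_1^{(u,v)}|^p
\]
by equivalence of the $\ell^2$ and $\ell^p$ norms on the fixed finite-dimensional space $\mathbb{R}^{dm}$, and likewise for $\Lambda_2$. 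Taking expectations and summing the per-component estimate of Lemma \ref{lem:lambda_1} over the $dm$ indices produces $E|\Lambda_1(\cdots)|^p \leq K n^{-p/2}\, E(1+|X^{i,N,n}_{\kappa_n(s)}|)^p$, while the analogous summation of Lemma \ref{lem:lambda2} gives $E|\Lambda_2(\cdots)|^p \leq K n^{-p/2}\frac{1}{N}\sum_{j=1}^N E(1+|X^{j,N,n}_{\kappa_n(s)}|)^p$. Absorbing the fixed constants $3^{p-1}$ and $K_{d,m}$ into the generic constant $K$ and adding the three bounds delivers the asserted inequality.

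Since every ingredient is already in hand, there is essentially no real obstacle here. The only point requiring any care is the passage from the componentwise bounds of the two lemmas to the matrix Hilbert--Schmidt norm; this is harmless precisely because $d$ and $m$ are fixed dimensions, so the comparison constant $K_{d,m}$ is independent of $n$ and $N$ and does not affect the claimed rates.
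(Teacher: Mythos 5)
Your argument is correct and is precisely the route the paper intends: the corollary is stated there as an immediate consequence of Remark \ref{rem:sigma:growth}, Lemma \ref{lem:lambda_1} and Lemma \ref{lem:lambda2}, obtained by splitting $\tilde{\sigma}=\sigma+\Lambda_1+\Lambda_2$ and summing the componentwise bounds. Your extra remark about passing from the componentwise estimates to the Hilbert--Schmidt norm via finite-dimensional norm equivalence is a detail the paper leaves implicit, and it is handled correctly.
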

\begin{lem} \label{lem:one:step}
Let Assumptions \ref{as:b:lip} to \ref{as:sig:der} be satisfied. Then,
\begin{align*}
E|X_s^{i,N, n}-X_{\kappa_n(s)}^{i,N, n}|^p \leq K n^{-\frac{p}{2}} E \big(1+|X_{\kappa_n(s)}^{i,N,n}|\big)^p + K n^{-\frac{p}{2}} \frac{1}{N}    \sum_{j=1}^N    E\big(1+|X_{\kappa_n(s)}^{j,N,n}|\big)^p
\end{align*}
for any $s\in[0,T]$,  $i\in\{1,\cdots,N\}$ and $n, \, N\in\mathbb{N}$ where the  constant $K>0$ does not depend on $n$ and $N$. 
\end{lem}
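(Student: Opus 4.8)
The plan is to exploit the fact that the increment $X_s^{i,N,n}-X_{\kappa_n(s)}^{i,N,n}$ is precisely the sum of the drift and diffusion integrals of the scheme \eqref{eq:scheme} over the single short interval $[\kappa_n(s),s]$, and to estimate these two pieces separately. Since $\kappa_n(r)=\kappa_n(s)$ for every $r\in[\kappa_n(s),s)$, the space and measure arguments evaluated at $\kappa_n(r)$ freeze at $\kappa_n(s)$, so I would write
\begin{align*}
X_s^{i,N,n}-X_{\kappa_n(s)}^{i,N,n}=\big(s-\kappa_n(s)\big)\, b_n\big(X_{\kappa_n(s)}^{i,N,n},\mu_{\kappa_n(s)}^{X,N,n}\big)+\sum_{l=1}^m\int_{\kappa_n(s)}^s \tilde{\sigma}^{(l)}\big(r,X_{\kappa_n(s)}^{i,N,n},\mu_{\kappa_n(s)}^{X,N,n}\big)\,dW_r^{(l),i},
\end{align*}
keeping in mind that the time argument $r$ of $\tilde{\sigma}$ persists through the inner stochastic integrals in $\Lambda_1$ and $\Lambda_2$. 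Applying the elementary inequality $|a+b|^p\le 2^{p-1}(|a|^p+|b|^p)$ then separates the drift and diffusion contributions.

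For the drift term, the frozen integrand gives $p$-th moment equal to $(s-\kappa_n(s))^p\,E|b_n(\cdots)|^p$. The step that requires care here is the \emph{choice} of growth bound: rather than the super-linear estimate, I would use the tamed bound $|b_n(x,\mu)|\le K n^{1/2}(1+|x|)$ from Remark \ref{rem:bn:growth}. The taming factor $n^{1/2}$ then combines with $(s-\kappa_n(s))^p\le n^{-p}$ to produce exactly $K n^{-p/2}E(1+|X_{\kappa_n(s)}^{i,N,n}|)^p$. This balancing of the taming factor against the interval length — which forces the tamed super-linear drift to contribute at the same order $n^{-p/2}$ as the diffusion rather than at a worse order or in a higher moment — is the only genuinely delicate point in the argument; the rest is bookkeeping.

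For the diffusion term I would apply the Burkholder-Gundy-Davis inequality together with Jensen's (Hölder's) inequality in the time variable to get
\begin{align*}
E\Big|\sum_{l=1}^m\int_{\kappa_n(s)}^s \tilde{\sigma}^{(l)}\big(r,\cdots\big)\,dW_r^{(l),i}\Big|^p\le K\big(s-\kappa_n(s)\big)^{p/2-1}\int_{\kappa_n(s)}^s E\big|\tilde{\sigma}\big(r,\cdots\big)\big|^p\,dr,
\end{align*}
and then invoke Corollary \ref{cor:sigma:tilde}, which already bundles the $\sigma$, $\Lambda_1$ and $\Lambda_2$ contributions via Lemmas \ref{lem:lambda_1} and \ref{lem:lambda2}. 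Because $\kappa_n(r)=\kappa_n(s)$ throughout the interval, the resulting bound on $E|\tilde{\sigma}(r,\cdots)|^p$ is constant in $r$; integrating and using $(s-\kappa_n(s))^{p/2}\le n^{-p/2}$ yields a contribution of the form $K n^{-p/2}E(1+|X_{\kappa_n(s)}^{i,N,n}|)^p+K n^{-p}\frac1N\sum_{j=1}^N E(1+|X_{\kappa_n(s)}^{j,N,n}|)^p$. Bounding the empirical-average term crudely by $K n^{-p/2}\frac1N\sum_{j=1}^N E(1+|X_{\kappa_n(s)}^{j,N,n}|)^p$ and adding the drift estimate gives the stated inequality. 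The two-term structure of the claim mirrors precisely the split between the single-particle contributions (drift, $\sigma$ and $\Lambda_1$) and the empirical-measure contribution ($\Lambda_2$). Note finally that no moment bound on the scheme is needed here: the estimate is stated in terms of $E(1+|X_{\kappa_n(s)}^{i,N,n}|)^p$ and is trivially valid should that quantity be infinite, which is exactly what is required to later feed it into the moment-bound induction of Lemma \ref{lem:moment:bound}.
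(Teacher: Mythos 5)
Your proposal is correct and follows essentially the same route as the paper: split the increment into the drift and diffusion integrals over $[\kappa_n(s),s]$, bound the drift via the tamed estimate $|b_n(x,\mu)|\le Kn^{1/2}(1+|x|)$ from Remark \ref{rem:bn:growth} so that the taming factor cancels against the interval length, and bound the stochastic integral via Burkholder--Davis--Gundy and H\"older together with Corollary \ref{cor:sigma:tilde}. The only cosmetic difference is that you make explicit the freezing $\kappa_n(r)=\kappa_n(s)$ and the independence of the lemma from any a priori moment bound, both of which are implicit in the paper's argument.
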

\begin{proof}
 From equation \eqref{eq:scheme}, one can get the following estimate, 
\begin{align*} 
E|& X_s^{i,N, n}-X_{\kappa_n(s)}^{i,N, n}|^p \leq  K E\Big|\int_{\kappa_n(s)}^s b_n\big(X_{\kappa_n(r)}^{i,N, n}, \mu_{\kappa_n(r)}^{X,N,n}\big) dr\Big|^p
\\
&  + KE \Big|\sum_{l=1}^m \int_{\kappa_n(s)}^s \tilde{\sigma}^{(l)}\big(r,X_{\kappa_n(r)}^{i,N,n}, \mu_{\kappa_n(r)}^{X,N,n}\big) dW_r^{(l),i} \Big|^p 
\end{align*}
and then the application of H\"older's inequality and Burkholder-Gundy-Davis inequality gives, 
\begin{align*} 
E|& X_s^{i,N, n}-X_{\kappa_n(s)}^{i,N, n}|^p \leq K n^{-p+1}E\int_{\kappa_n(s)}^s |b_n\big(X_{\kappa_n(r)}^{i,N, n}, \mu_{\kappa_n(r)}^{X,N,n}\big)|^p dr
\\
&   + Kn^{-\frac{p}{2}+1} E \sum_{l=1}^m \int_{\kappa_n(s)}^s |\tilde{\sigma}^{(l)}\big(r,X_{\kappa_n(r)}^{i,N,n}, \mu_{\kappa_n(r)}^{X,N,n}\big) |^p dr 
\end{align*} 
which on using Remark \ref{rem:bn:growth} and Corollary \ref{cor:sigma:tilde} completes the proof. 
\end{proof}
\begin{lem} \label{lem:moment:bound}
Let Assumptions  \ref{as:initial} to \ref{as:sig:der} be satisfied. Then,
\begin{align*}
\sup_{i\in\{1,\ldots,N\}} E &\sup_{t\in [0,T]}(1+|X_{t}^{i,N, n}|^2)^{p/2} \leq K
\end{align*}
for any $n,\, N\in\mathbb{N}$ where the constant $K>0$ does not depend on $n$ and $N$. 
\end{lem}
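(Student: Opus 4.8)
The plan is to apply It\^o's formula to the real-valued process $(1+|X_t^{i,N,n}|^2)^{p/2}$ and to control each resulting term by the estimates assembled in Lemmas \ref{lem:one:step} and \ref{lem:moment:bound}'s predecessors, before closing with a Gronwall argument taken uniformly over the particle index $i$. Writing $V(x)=(1+|x|^2)^{p/2}$, so that $\nabla V(x)=p(1+|x|^2)^{(p-2)/2}x$ and the Hessian is dominated by $K(1+|x|^2)^{(p-2)/2}$, It\^o's formula applied to the scheme \eqref{eq:scheme} yields a first-order drift contribution weighted by $(1+|X_s^{i,N,n}|^2)^{(p-2)/2}$ and involving $\langle X_s^{i,N,n},b_n(X_{\kappa_n(s)}^{i,N,n},\mu_{\kappa_n(s)}^{X,N,n})\rangle$, a second-order contribution controlled by $(1+|X_s^{i,N,n}|^2)^{(p-2)/2}|\tilde{\sigma}(s,X_{\kappa_n(s)}^{i,N,n},\mu_{\kappa_n(s)}^{X,N,n})|^2$, and a local-martingale term. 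To guarantee a priori finiteness of the quantities manipulated, I would first carry out the estimates up to a localizing stopping time $\tau_M=\inf\{t:|X_t^{i,N,n}|>M\}$ and pass to the limit $M\to\infty$ by Fatou's lemma at the end.

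The main obstacle is the drift term, because the drift is frozen at the grid point $X_{\kappa_n(s)}^{i,N,n}$ while the It\^o weight sits at the running time $s$, so the coercivity bound $\langle x,b(x,\mu)\rangle\leq K(1+|x|)^2$ of Remark \ref{rem:b:growth} cannot be invoked directly. I would split $X_s^{i,N,n}=X_{\kappa_n(s)}^{i,N,n}+(X_s^{i,N,n}-X_{\kappa_n(s)}^{i,N,n})$. For the grid-point part one uses that the taming denominator in \eqref{eq:bn} is at least $1$, whence $\langle x,b_n(x,\mu)\rangle\leq K(1+|x|)^2$ is inherited from Remark \ref{rem:b:growth}, giving a term controlled by $K(1+|X_{\kappa_n(s)}^{i,N,n}|^2)^{p/2}$ after absorbing the weight by Young's inequality. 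For the remainder part, the apparent danger of the factor $n^{1/2}$ in the linear-growth bound $|b_n|\leq Kn^{1/2}(1+|x|)$ of Remark \ref{rem:bn:growth} is exactly compensated by Lemma \ref{lem:one:step}, which makes $X_s^{i,N,n}-X_{\kappa_n(s)}^{i,N,n}$ of order $n^{-1/2}$ in $L^p$; after Cauchy--Schwarz (or Young's inequality), the product $n^{1/2}\cdot n^{-1/2}$ stays bounded and the whole term is dominated by grid-point growth quantities with constants independent of $n$.

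The second-order term is handled directly by Corollary \ref{cor:sigma:tilde}, which bounds the moments of the modified diffusion $\tilde{\sigma}$ (including the Milstein corrections $\Lambda_1,\Lambda_2$) by $(1+|X_{\kappa_n(s)}^{i,N,n}|)$-growth terms plus a mean-field average over $j$, again after splitting the weight by Young's inequality. For the supremum over $t$, I would treat the martingale term by the Burkholder--Davis--Gundy inequality, bound its quadratic variation by $\sup_{r\leq t}V(X_r^{i,N,n})$ times an integral of $(1+|X_r^{i,N,n}|^2)^{(p-2)/2}|\tilde{\sigma}|^2$, and then apply Cauchy--Schwarz together with Young's inequality to absorb a small multiple of $E\sup_{r\leq t}V(X_r^{i,N,n})$ into the left-hand side.

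Finally, to close the recursion in the presence of the mean-field coupling, I would note that every average $\frac1N\sum_{j=1}^N E(1+|X_{\kappa_n(s)}^{j,N,n}|)^p$ produced by $\Lambda_2$ and by Corollary \ref{cor:sigma:tilde} is dominated by $\sup_{i}E(1+|X_{\kappa_n(s)}^{i,N,n}|)^p$, so that after taking the supremum over $i\in\{1,\ldots,N\}$ one arrives at an integral inequality $\phi(t)\leq K+K\int_0^t\phi(s)\,ds$ for $\phi(t):=\sup_i E\sup_{r\leq t}(1+|X_r^{i,N,n}|^2)^{p/2}$, with $K$ independent of $n$ and $N$; Gronwall's lemma then gives the claim. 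The essential difficulty throughout is the bookkeeping that ensures the super-linear growth of $b$ never appears undamped: each occurrence of a high power of $|X|$ must be paired either with the taming denominator of \eqref{eq:bn} or with an $n^{-1/2}$ increment factor from Lemma \ref{lem:one:step}, so that all constants remain uniform in $n$ and $N$.
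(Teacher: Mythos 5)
Your proposal is correct and follows essentially the same route as the paper: Itô's formula for $(1+|x|^2)^{p/2}$, the splitting of the drift inner product into a grid-point part (handled by the coercivity of $b$ inherited by $b_n$ since the taming denominator is at least $1$) and an increment part where the $n^{1/2}$ growth of $b_n$ from Remark \ref{rem:bn:growth} is cancelled against the $n^{-1/2}$ one-step estimate of Lemma \ref{lem:one:step} via Young's inequality, the treatment of the diffusion and martingale terms through Corollary \ref{cor:sigma:tilde} and Burkholder--Davis--Gundy with absorption of $E\sup V$ into the left-hand side, domination of the mean-field averages by the supremum over $i$, and Gronwall. The only additions are the explicit localization by stopping times (a standard technicality the paper leaves implicit), so the argument matches the paper's proof in all essential respects.
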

\begin{proof}
By the application of It\^o's formula, 
\begin{align*}
\big(1+&|X_{t}^{i,N, n}|^2\big)^{p/2} = \big(1+|X_{0}^{i}|^2\big)^{p/2} 
\\
& + p \int_0^t \big(1+|X_{s}^{i,N, n}|^2\big)^{p/2-1} \big\langle X_{s}^{i,N, n}, b_n\big(X_{\kappa_n(s)}^{i,N, n}, \mu_{\kappa_n(s)}^{X,N,n}\big)\big\rangle  ds
\\
& + p  \int_0^t \big(1+|X_{s}^{i,N, n}|^2\big)^{p/2-1} \big\langle X_{s}^{i,N, n},  \tilde{\sigma}\big(s, X_{\kappa_n(s)}^{i,N, n}, \mu_{\kappa_n(s)}^{X,N,n}\big) dW_s^{i} \big\rangle 
 \\
 & + \frac{p(p-2)}{2}  \int_0^t \big(1+|X_{s}^{i,N, n}|^2\big)^{p/2-2}  \big|\tilde{\sigma}^{*}\big(s, X_{\kappa_n(s)}^{i,N, n}, \mu_{\kappa_n(s)}^{X,N,n}\big)X_{s}^{i,N, n}\big|^2 ds
 \\
 & + \frac{p}{2}  \int_0^t \big(1+|X_{s}^{i,N, n}|^2\big)^{p/2-1}  \big|\tilde{\sigma}\big(s, X_{\kappa_n(s)}^{i,N, n}, \mu_{\kappa_n(s)}^{X,N,n}\big)\big|^2  ds
\end{align*}
almost surely, which on the application of Burkholder-Gundy-Davis inequality  and  Cauchy-Schwarz inequality  yields the following estimate, 
\begin{align*}
E  \sup_{t\in [0,u]}&\big(1+|X_{t}^{i,N, n}|^2\big)^{p/2} \leq E\big(1+|X_{0}^{i}|^2\big)^{p/2} 
\\
& + K E \sup_{t\in[0,u]} \Big|\int_0^t \big(1+|X_{s}^{i,N, n}|^2\big)^{p/2-1} \big\langle X_{\kappa_n(s)}^{i,N, n}, b_n\big(X_{\kappa_n(s)}^{i,N, n}, \mu_{\kappa_n(s)}^{X,N,n}\big) \big\rangle ds \Big|
\\
& + K E \sup_{t\in[0,u]} \Big| \int_0^t \big(1+|X_{s}^{i,N, n}|^2\big)^{p/2-1}  \big\langle X_{s}^{i,N, n}- X_{\kappa_n(s)}^{i,N, n} , b_n\big(X_{\kappa_n(s)}^{i,N, n}, \mu_{\kappa_n(s)}^{X,N,n}\big) \big\rangle ds \Big|
\\
& + K E\Big\{\int_0^u \big(1+|X_{s}^{i,N, n}|^2\big)^{p-2} | \tilde{\sigma}\big(s, X_{\kappa_n(s)}^{i,N, n}, \mu_{\kappa_n(s)}^{X,N,n}\big)|^2 ds \Big\}^\frac{1}{2}
 \\
 & + K  E \int_0^u \big(1+|X_{s}^{i,N, n}|^2\big)^{p/2-1}  \big|\tilde{\sigma}\big(s, X_{\kappa_n(s)}^{i,N, n}, \mu_{\kappa_n(s)}^{X,N,n}\big)\big|^2  ds
\end{align*}
for  any $i\in\{1,\ldots,N\}$, $n, N\in \mathbb{N}$,  and $u\in[0,T]$.  Also, one uses Remark \ref{rem:b:growth}, Cauchy-Schwarz inequality and Young's inequality to obtain the following estimate, 
\begin{align*}
E \sup_{t\in [0,u]}\big(1+&|X_{t}^{i,N, n}|^2\big)^{p/2} \leq E\big(1+|X_{0}^{i}|^2\big)^{p/2} + K  \int_0^u E \sup_{r\in[0,s]}\big(1+|X_{r}^{i,N, n}|^2\big)^{p/2}  ds
\\
& + K E \int_0^u \big(1+|X_{s}^{i,N, n}|^2\big)^{p/2-1} \big(1+ |X_{\kappa_n(s)}^{i,N, n}|^2\big)ds  
\\
& + K E \int_0^u  n^{p/4} \big|X_{s}^{i,N, n}- X_{\kappa_n(s)}^{i,N, n} \big|^{p/2} n^{-p/4} \big|b_n\big(X_{\kappa_n(s)}^{i,N, n}, \mu_{\kappa_n(s)}^{X,N,n}\big)\big|^{p/2} ds
\\
& + K E\Big\{\int_0^u  \big| \tilde{\sigma}\big(s, X_{\kappa_n(s)}^{i,N, n}, \mu_{\kappa_n(s)}^{X,N,n}\big)\big|^2 ds \Big\}^{p/2}
 \\
 & + K  E \int_0^u   \big|\tilde{\sigma}\big(s, X_{\kappa_n(s)}^{i,N, n}, \mu_{\kappa_n(s)}^{X,N,n}\big)\big|^p  ds
\end{align*}
and then due to H\"older's inequality  and Young's inequality, one gets 
\begin{align*}
E &\sup_{t\in [0,u]}\big(1+|X_{t}^{i,N, n}|^2\big)^{p/2} \leq E\big(1+|X_{0}^{i}|^2\big)^{p/2}  + K  \int_0^u E \sup_{r\in[0,s]}\big(1+|X_{r}^{i,N, n}|^2)^{p/2}  ds  
\\
& + K E \int_0^u  n^\frac{p}{2} |X_{s}^{i,N, n}- X_{\kappa_n(s)}^{i,N, n} |^p ds + K E \int_0^u   n^{-\frac{p}{2}}|b_n\big(X_{\kappa_n(s)}^{i,N, n}, \mu_{\kappa_n(s)}^{X,N,n}\big)|^p ds
 \\
 & + K  E \int_0^u   |\tilde{\sigma}\big(s, X_{\kappa_n(s)}^{i,N, n}, \mu_{\kappa_n(s)}^{X,N,n}\big)|^p  ds
\end{align*}
for any $i \in\{1,\ldots, N\}$, $n, N\in\mathbb{N}$  and $u\in[0,T]$. The application of Remark \ref{rem:b:growth}, Lemma \ref{lem:one:step} and Corollary \ref{cor:sigma:tilde} yields, 
\begin{align*}
\sup_{i\in\{1,\ldots,N\}} E &\sup_{t\in [0,u]}(1+|X_{t}^{i,N, n}|^2)^{p/2} \leq E(1+|X_{0}^2)^{p/2} 
\\
& + K  \int_0^u \sup_{i\in\{1,\ldots,N\}}E \sup_{r\in[0,s]}(1+|X_{r}^{i,N, n}|^2)^{p/2}  ds  
\end{align*}
for any  $n, N\in\mathbb{N}$  and $u\in[0,T]$. Finally, the proof is completed by using the Gronwall's inequality. 
\end{proof}

\section{Rate of Convergence}
In this section, we shall prove Proposition \ref{prop:milstein}. For this, we require some lemmas and corollaries which are shown below. Notice that as a consequence of Lemmas \ref{lem:lambda_1}, \ref{lem:lambda2} and  \ref{lem:moment:bound}, we obtain the following corollaries. 
\begin{cor} \label{cor:lambda_1}
Let Assumptions  \ref{as:initial} to \ref{as:sig:der} be satisfied. Then,
\begin{align*}
E|\Lambda_1 \big(s,X_{\kappa_n(s)}^{i,N,n}, \mu_{\kappa_n(s)}^{X,N,n}\big)|^p \leq K n^{-\frac{p}{2}} 
\end{align*}
for any $s\in[0,T]$,  $i\in\{1,\cdots,N\}$ and $n, \, N\in\mathbb{N}$ where the  constant $K>0$ does not depend on $n$ and $N$. 
\end{cor}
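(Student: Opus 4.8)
The plan is to reduce this matrix-norm estimate to the componentwise bound already furnished by Lemma~\ref{lem:lambda_1}, and then to absorb the residual moment factor using the uniform moment bound of Lemma~\ref{lem:moment:bound}. First I would recall that $\Lambda_1\big(s,X_{\kappa_n(s)}^{i,N,n}, \mu_{\kappa_n(s)}^{X,N,n}\big)$ is a $d\times m$ matrix with $(u,v)$-th entry $\Lambda_1^{(u,v)}\big(s,X_{\kappa_n(s)}^{i,N,n}, \mu_{\kappa_n(s)}^{X,N,n}\big)$, and that $|\cdot|$ here denotes the Hilbert--Schmidt norm, so that
\begin{align*}
\big|\Lambda_1\big(s,X_{\kappa_n(s)}^{i,N,n}, \mu_{\kappa_n(s)}^{X,N,n}\big)\big|^2 = \sum_{u=1}^d \sum_{v=1}^m \big|\Lambda_1^{(u,v)}\big(s,X_{\kappa_n(s)}^{i,N,n}, \mu_{\kappa_n(s)}^{X,N,n}\big)\big|^2.
\end{align*}
Since $p\geq 2$, Jensen's inequality applied to the $dm$ summands (equivalently, the power-mean inequality) yields
\begin{align*}
\big|\Lambda_1\big|^p = \Big(\sum_{u,v} \big|\Lambda_1^{(u,v)}\big|^2\Big)^{p/2} \leq (dm)^{p/2-1} \sum_{u,v} \big|\Lambda_1^{(u,v)}\big|^p,
\end{align*}
which reduces the problem to the single-entry estimates.

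Next I would take expectations and invoke Lemma~\ref{lem:lambda_1} for each of the $dm$ entries, which gives
\begin{align*}
E\big|\Lambda_1\big(s,X_{\kappa_n(s)}^{i,N,n}, \mu_{\kappa_n(s)}^{X,N,n}\big)\big|^p \leq (dm)^{p/2-1} \sum_{u,v} E\big|\Lambda_1^{(u,v)}\big|^p \leq K (dm)^{p/2}\, n^{-\frac{p}{2}}\, E\big(1+|X_{\kappa_n(s)}^{i,N,n}|\big)^p.
\end{align*}
It then remains to control the moment factor uniformly. Using $(1+|x|)^p \leq 2^{p/2}(1+|x|^2)^{p/2}$ together with Lemma~\ref{lem:moment:bound}, one has
\begin{align*}
E\big(1+|X_{\kappa_n(s)}^{i,N,n}|\big)^p \leq 2^{p/2}\, \sup_{i\in\{1,\ldots,N\}} E\sup_{t\in[0,T]}\big(1+|X_{t}^{i,N,n}|^2\big)^{p/2} \leq K,
\end{align*}
with $K$ independent of $s$, $i$, $n$ and $N$. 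Combining the two displays above establishes the asserted bound $E|\Lambda_1|^p \leq K n^{-\frac{p}{2}}$.

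Because every step is a direct application of results already proved, I do not expect any genuine obstacle here. The only point deserving a moment's care is the passage from the entrywise estimate to the full Hilbert--Schmidt norm, which is handled by the finite-dimensional norm-equivalence displayed above; the decisive structural feature is simply that the factor $E\big(1+|X_{\kappa_n(s)}^{i,N,n}|\big)^p$ appearing in Lemma~\ref{lem:lambda_1} is bounded by a constant independent of $n$ and $N$ via the moment bound of Lemma~\ref{lem:moment:bound}, so that the $n^{-p/2}$ decay rate is preserved after summation.
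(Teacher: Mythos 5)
Your argument is exactly the route the paper intends: the corollary is stated there without proof as an immediate consequence of Lemma~\ref{lem:lambda_1} (componentwise bound) combined with the uniform moment bound of Lemma~\ref{lem:moment:bound}, and your passage from entries to the Hilbert--Schmidt norm via the power-mean inequality is the only detail the paper leaves implicit. The proposal is correct and complete.
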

\begin{cor} \label{cor:lambda2}
Let Assumptions  \ref{as:initial} to \ref{as:sig:der} be satisfied. Then,
\begin{align*}
E|\Lambda_2 \big(s,X_{\kappa_n(s)}^{i,N,n}, \mu_{\kappa_n(s)}^{X,N,n}\big)|^p \leq K n^{-\frac{p}{2}}   
\end{align*}
for any $s\in[0,T]$,  $i\in\{1,\cdots,N\}$ and $n, \, N\in\mathbb{N}$ where the  constant $K>0$ does not depend on $n$ and $N$. 
\end{cor}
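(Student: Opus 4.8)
The plan is to deduce the matrix-norm estimate from the componentwise bound already established in Lemma \ref{lem:lambda2}, combined with the moment bound of Lemma \ref{lem:moment:bound}; the argument runs exactly parallel to the proof of Corollary \ref{cor:lambda_1}. First I would recall that, by its definition, $\Lambda_2 \big(s,X_{\kappa_n(s)}^{i,N,n}, \mu_{\kappa_n(s)}^{X,N,n}\big)$ is a $d\times m$ matrix with entries $\Lambda_2^{(u,v)}$, so that its Hilbert--Schmidt norm satisfies $|\Lambda_2|^2=\sum_{u=1}^d\sum_{v=1}^m|\Lambda_2^{(u,v)}|^2$. Since $p\geq 2$, the elementary inequality $\big(\sum_{k=1}^{M}a_k\big)^{p/2}\leq M^{p/2-1}\sum_{k=1}^{M}a_k^{p/2}$, applied with the $M=dm$ nonnegative terms $a_k=|\Lambda_2^{(u,v)}|^2$, yields
\begin{align*}
E|\Lambda_2 \big(s,X_{\kappa_n(s)}^{i,N,n}, \mu_{\kappa_n(s)}^{X,N,n}\big)|^p \leq K \sum_{u=1}^d\sum_{v=1}^m E|\Lambda_2^{(u,v)} \big(s,X_{\kappa_n(s)}^{i,N,n}, \mu_{\kappa_n(s)}^{X,N,n}\big)|^p,
\end{align*}
with $K$ depending only on $d$, $m$ and $p$.

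Next I would insert the componentwise estimate of Lemma \ref{lem:lambda2} into each summand, which gives
\begin{align*}
E|\Lambda_2 \big(s,X_{\kappa_n(s)}^{i,N,n}, \mu_{\kappa_n(s)}^{X,N,n}\big)|^p \leq K n^{-\frac{p}{2}} \frac{1}{N}\sum_{j=1}^N E\big(1+|X_{\kappa_n(s)}^{j,N,n}|\big)^p.
\end{align*}
The final step is to control the averaged moment term uniformly in $N$. By Lemma \ref{lem:moment:bound} one has $\sup_{i\in\{1,\ldots,N\}} E\sup_{t\in[0,T]}(1+|X_t^{i,N,n}|^2)^{p/2}\leq K$ with $K$ independent of $N$; in particular $E\big(1+|X_{\kappa_n(s)}^{j,N,n}|\big)^p\leq K$ for every $j$, whence $\frac{1}{N}\sum_{j=1}^N E\big(1+|X_{\kappa_n(s)}^{j,N,n}|\big)^p\leq K$, again with a constant not depending on $N$. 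Substituting this bound yields the claimed estimate $E|\Lambda_2|^p\leq K n^{-\frac{p}{2}}$ and completes the proof.

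I expect no genuine obstacle in this corollary, as it is a direct consequence of two earlier results. The only point requiring care is the independence of the constant from $N$: this is guaranteed precisely because the moment bound of Lemma \ref{lem:moment:bound} is uniform in the particle index $i$ with a constant independent of $N$, while the $\frac{1}{N}\sum_{j=1}^N$ normalization keeps the empirical average of the moments bounded no matter how large $N$ is. The passage from the componentwise $L^p$-bounds to the Hilbert--Schmidt norm is routine finite-dimensional norm equivalence and contributes only a factor depending on $d$, $m$ and $p$.
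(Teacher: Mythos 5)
Your proposal is correct and follows exactly the route the paper intends: the paper gives no separate proof but states that Corollary \ref{cor:lambda2} is a direct consequence of Lemma \ref{lem:lambda2} combined with the uniform moment bound of Lemma \ref{lem:moment:bound}, which is precisely your argument. The passage from the componentwise bounds to the Hilbert--Schmidt norm and the uniformity in $N$ via the averaged empirical moment are handled correctly.
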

\begin{cor} \label{cor:sigma:tilde:bound}
Let Assumptions \ref{as:initial} to  \ref{as:sig:der} be satisfied. Then,
\begin{align*}
E|\tilde{\sigma} \big(s,X_{\kappa_n(s)}^{i,N,n}, \mu_{\kappa_n(s)}^{X,N,n}\big)|^p \leq K 
\end{align*}
for any $s\in[0,T]$,  $i\in\{1,\cdots,N\}$ and $n, \, N\in\mathbb{N}$ where the  constant $K>0$ does not depend on $n$ and $N$. 
\end{cor}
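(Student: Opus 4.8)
The plan is to derive this bound directly from the three-term estimate already furnished by Corollary \ref{cor:sigma:tilde} and then absorb the remaining moments using the uniform moment bound of Lemma \ref{lem:moment:bound}. Concretely, I would begin by recalling that Corollary \ref{cor:sigma:tilde} gives
\[
E\big|\tilde{\sigma} \big(s,X_{\kappa_n(s)}^{i,N,n},  \mu_{\kappa_n(s)}^{X,N,n}\big)\big|^p \leq K  E \big(1+|X_{\kappa_n(s)}^{i,N,n}|\big)^p + K n^{-\frac{p}{2}} E \big(1+|X_{\kappa_n(s)}^{i,N,n}|\big)^p + K n^{-\frac{p}{2}} \frac{1}{N} \sum_{j=1}^N E\big(1+|X_{\kappa_n(s)}^{j,N,n}|\big)^p,
\]
so the task reduces entirely to controlling the $p$-th moments $E(1+|X_{\kappa_n(s)}^{j,N,n}|)^p$ uniformly in $s$, $i$, $j$, $n$ and $N$.

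Next I would observe that for every $n\in\mathbb{N}$ one has $n^{-p/2}\leq 1$, which lets me collapse the three terms into a single expression of the form $K\,\sup_{j\in\{1,\ldots,N\}} E(1+|X_{\kappa_n(s)}^{j,N,n}|)^p$ after relabelling the generic constant; in particular the empirical average $\tfrac{1}{N}\sum_{j}$ is dominated by the supremum over $j$. Then, using the elementary inequality $(1+|x|)^2\leq 2(1+|x|^2)$, I would pass to the quadratic form $E(1+|X_{\kappa_n(s)}^{j,N,n}|)^p\leq 2^{p/2}\,E\big(1+|X_{\kappa_n(s)}^{j,N,n}|^2\big)^{p/2}$. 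Since $\kappa_n(s)\in[0,T]$, the right-hand side is bounded by $2^{p/2}\,E\sup_{t\in[0,T]}\big(1+|X_{t}^{j,N,n}|^2\big)^{p/2}$, which by Lemma \ref{lem:moment:bound} is at most $K$ uniformly in $j$, $n$ and $N$. Combining these estimates yields the asserted bound with a constant $K>0$ independent of $n$ and $N$.

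I do not anticipate any genuine obstacle: the statement is a mechanical corollary, and the only point requiring attention is the uniformity of the constant. This uniformity is inherited automatically because each cited ingredient—Corollary \ref{cor:sigma:tilde} and Lemma \ref{lem:moment:bound}—already carries a constant independent of $n$ and $N$, and the manipulations above (using $n^{-p/2}\leq 1$ and the moment bound) introduce only dimensional factors depending on $p$. An equally short alternative would bypass Corollary \ref{cor:sigma:tilde} and instead split $\tilde{\sigma}=\sigma+\Lambda_1+\Lambda_2$ via $|a+b+c|^p\leq 3^{p-1}(|a|^p+|b|^p+|c|^p)$, bounding $E|\sigma|^p$ by Remark \ref{rem:sigma:growth} together with Lemma \ref{lem:moment:bound}, and bounding $E|\Lambda_1|^p$, $E|\Lambda_2|^p$ directly by Corollaries \ref{cor:lambda_1} and \ref{cor:lambda2}; both routes lead to the same conclusion.
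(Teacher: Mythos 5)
Your proof is correct and follows essentially the same route as the paper, which presents this corollary as an immediate consequence of Corollary \ref{cor:sigma:tilde} (equivalently Remark \ref{rem:sigma:growth} together with Lemmas \ref{lem:lambda_1} and \ref{lem:lambda2}) combined with the uniform moment bound of Lemma \ref{lem:moment:bound}. The passage from $E(1+|X_{\kappa_n(s)}^{j,N,n}|)^p$ to the quantity controlled by Lemma \ref{lem:moment:bound} and the uniformity of the constants are handled correctly.
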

\begin{cor} \label{cor:one:step:true}
Let Assumptions  \ref{as:initial} to \ref{as:sig:der} be satisfied. Then,
\begin{align*}
E|X_s^{i,N}-X_{\kappa_n(s)}^{i,N}|^p \leq K n^{-\frac{p}{2}} 
\end{align*}
for any $s\in[0,T]$,  $i\in\{1,\cdots,N\}$ and $n, \, N\in\mathbb{N}$ where the  constant $K>0$ does not depend on $n$ and $N$. 
\end{cor}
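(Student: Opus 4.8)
The plan is to mimic the argument of Lemma~\ref{lem:one:step}, but with the Milstein scheme \eqref{eq:scheme} replaced by the interacting particle system \eqref{eq:interactint:particle}. The estimate is in fact cleaner here, since the coefficients in \eqref{eq:interactint:particle} are the original $b$ and $\sigma$ evaluated along the continuous-time process $X_r^{i,N}$ (rather than the tamed $b_n$ and the modified $\tilde{\sigma}$ frozen at the grid points), so one may appeal directly to the moment bound for the particle system and to the growth estimates in Remarks~\ref{rem:b:growth} and \ref{rem:sigma:growth}.

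First I would subtract \eqref{eq:interactint:particle} evaluated at $s$ and at $\kappa_n(s)$ to obtain
\begin{align*}
X_s^{i,N}-X_{\kappa_n(s)}^{i,N} = \int_{\kappa_n(s)}^s b\big(X_r^{i,N},\mu_r^{X,N}\big)\,dr + \sum_{l=1}^m \int_{\kappa_n(s)}^s \sigma^{(l)}\big(X_r^{i,N},\mu_r^{X,N}\big)\,dW_r^{(l),i},
\end{align*}
and then estimate $E|X_s^{i,N}-X_{\kappa_n(s)}^{i,N}|^p$ by $2^{p-1}$ times the sum of the $p$-th moments of the two terms on the right. For the drift term I would apply H\"older's inequality in time to extract a factor $(s-\kappa_n(s))^{p-1}\le n^{-(p-1)}$ and then use the polynomial growth $|b(x,\mu)|\le K(1+|x|)^{\rho/2+2}$ from Remark~\ref{rem:b:growth}. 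For the stochastic term I would use the Burkholder--Gundy--Davis inequality followed by H\"older in time, which yields a factor $(s-\kappa_n(s))^{p/2-1}\le n^{-(p/2-1)}$, together with the linear growth $|\sigma(x,\mu)|\le K(1+|x|)$ from Remark~\ref{rem:sigma:growth}. In both cases the remaining integrands $E(1+|X_r^{i,N}|)^q$ are bounded, uniformly in $r$, $i$ and $N$, by the moment bound for the interacting particle system. Since the interval $[\kappa_n(s),s]$ has length at most $n^{-1}$, this contributes one extra factor of $n^{-1}$ in each term, so the drift term is of order $n^{-p}$ and the diffusion term of order $n^{-p/2}$; the latter dominates and produces the claimed bound $Kn^{-p/2}$.

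The one genuine prerequisite, and the step I expect to be the main obstacle, is the uniform-in-$N$ moment bound $\sup_{i}\sup_{N} E\sup_{t\in[0,T]}(1+|X_t^{i,N}|)^{q}\le K$ for $q=p(\rho/2+2)$, which is precisely what keeps $E|b(X_r^{i,N},\mu_r^{X,N})|^p$ finite in the presence of the super-linear drift (the diffusion term needs only the $p$-th moment). This requires the initial data to possess sufficiently high moments, i.e.\ Assumption~\ref{as:initial} with $p$ taken large enough, and is part of the well-posedness and propagation-of-chaos framework for \eqref{eq:interactint:particle} established in \cite{reis2019a, reis2019b} underlying Proposition~\ref{prop:chaos}. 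Once this bound is in hand, the remainder is a routine application of H\"older's and the Burkholder--Gundy--Davis inequalities, carried out exactly as in Lemma~\ref{lem:one:step}.
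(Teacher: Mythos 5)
Your proof is correct and follows essentially the route the paper intends: the paper states this corollary without proof as a consequence of its lemmas, and the intended argument is exactly the one-step estimate of Lemma~\ref{lem:one:step} transplanted to the interacting particle system \eqref{eq:interactint:particle}, using H\"older, Burkholder--Davis--Gundy, the growth bounds of Remarks~\ref{rem:b:growth} and \ref{rem:sigma:growth}, and a uniform-in-$N$ moment bound for $X^{i,N}$. You are also right to flag that the required moment bound of order $p(\rho/2+2)$ for the particle system is the one genuine prerequisite; the paper leaves it implicit, relying on the well-posedness framework of \cite{reis2019a, reis2019b} behind Proposition~\ref{prop:chaos}.
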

\begin{cor} \label{cor:one:step}
Let Assumptions  \ref{as:initial} to \ref{as:sig:der} be satisfied. Then,
\begin{align*}
E|X_s^{i,N, n}-X_{\kappa_n(s)}^{i,N, n}|^p \leq K n^{-\frac{p}{2}} 
\end{align*}
for any $s\in[0,T]$,  $i\in\{1,\cdots,N\}$ and $n, \, N\in\mathbb{N}$ where the  constant $K>0$ does not depend on $n$ and $N$. 
\end{cor}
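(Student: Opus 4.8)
The plan is to derive this corollary as an immediate consequence of the one-step estimate already obtained in Lemma \ref{lem:one:step} together with the uniform moment bound of Lemma \ref{lem:moment:bound}. Lemma \ref{lem:one:step} provides
\begin{align*}
E|X_s^{i,N, n}-X_{\kappa_n(s)}^{i,N, n}|^p \leq K n^{-\frac{p}{2}} E \big(1+|X_{\kappa_n(s)}^{i,N,n}|\big)^p + K n^{-\frac{p}{2}} \frac{1}{N} \sum_{j=1}^N E\big(1+|X_{\kappa_n(s)}^{j,N,n}|\big)^p,
\end{align*}
so the only thing left to do is to show that each of the two factors multiplying $n^{-p/2}$ is bounded by a constant that does not depend on $n$ or $N$.

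To control these factors I would first invoke the elementary equivalence of norms $(1+|x|)^p \leq 2^{p/2}(1+|x|^2)^{p/2}$ for all $x\in\mathbb{R}^d$, which rewrites the expectations in the form appearing in Lemma \ref{lem:moment:bound}. Then, since $E\big(1+|X_{\kappa_n(s)}^{i,N,n}|^2\big)^{p/2} \leq E \sup_{t\in[0,T]}\big(1+|X_t^{i,N,n}|^2\big)^{p/2}$, the uniform-in-time-and-index moment bound
\begin{align*}
\sup_{i\in\{1,\ldots,N\}} E \sup_{t\in [0,T]}(1+|X_{t}^{i,N, n}|^2)^{p/2} \leq K
\end{align*}
from Lemma \ref{lem:moment:bound} gives $E\big(1+|X_{\kappa_n(s)}^{i,N,n}|\big)^p \leq K$ for every $i$, $s$, $n$ and $N$. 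The averaged term $\frac{1}{N}\sum_{j=1}^N E\big(1+|X_{\kappa_n(s)}^{j,N,n}|\big)^p$ is bounded the same way, since each summand is at most $K$ by the supremum over $i$ in Lemma \ref{lem:moment:bound}, and hence the arithmetic mean is also at most $K$.

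Substituting these two bounds back into the estimate of Lemma \ref{lem:one:step} collapses the right-hand side to $K n^{-p/2}$, with $K$ absorbing all constants and, crucially, remaining independent of $n$ and $N$ precisely because the moment bound of Lemma \ref{lem:moment:bound} is uniform in these parameters. There is no genuine obstacle here: the entire difficulty has already been discharged in establishing the uniform moment bound, and this corollary is a routine combination of that bound with the one-step estimate. The analogous Corollary \ref{cor:one:step:true} for the interacting particle system $X^{i,N}$ would be proved identically, using the moment bound for \eqref{eq:interactint:particle} in place of Lemma \ref{lem:moment:bound}.
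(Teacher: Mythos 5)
Your proof is correct and is essentially the paper's own argument: the paper states this corollary as an immediate consequence of combining the one-step estimate of Lemma \ref{lem:one:step} with the uniform moment bound of Lemma \ref{lem:moment:bound}, exactly as you do. The elementary norm comparison and the observation that the averaged term is controlled by the supremum over $i$ are routine and correctly handled.
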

The following  lemma is  very useful in this article. 
\begin{lem}\label{lem:MVT}
Let $f:\mathbb{R}^d\times \mathcal{P}_2(\mathbb{R}^d) \to \mathbb{R}$ be a  continuous function such that its derivative $\partial_x f: \mathbb{R}^d\times \mathcal{P}_2(\mathbb{R}^d)\to\mathbb{R}^d$ and its measure derivative $\partial_\mu f:\mathbb{R}^d\times\mathcal{P}_2(\mathbb{R}^d)\times\mathbb{R}^d \to \mathbb{R}^d$ exists.  
Then, there exists a $\theta \in (0,1)$  such that, 
\begin{align*}
f(y,\mu)&-f(\bar{y},\bar{\mu}) =\big\langle \partial_x f( \bar{y}+ \theta (y-\bar{y}), \mu), y-\bar{y}\big \rangle 
\\
& \qquad+ \tilde{E} \big \langle \partial_\mu f (\bar{y}, L_{\bar{Z}+\theta(Z-\bar{Z})}, \bar{Z}+\theta(Z-\bar{Z})), Z-\bar{Z} \big\rangle
\end{align*}
 for any $y,\bar{y}\in\mathbb{R}^d$, $\mu,\bar{\mu}\in\mathcal{P}_2(\mathbb{R}^d)$,  and  random variables $Z, \bar{Z}$ defined on an atomless, Polish probability space $(\tilde{\Omega}, \tilde{\mathcal{F}}, \tilde{P} )$ such that $L_Z=\mu$ and $L_{\bar{Z}}=\bar{\mu}$.
\end{lem}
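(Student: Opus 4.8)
This is a mean-value theorem for functions on $\mathbb{R}^d\times\mathcal{P}_2(\mathbb{R}^d)$, combining a classical MVT in the space variable with a "Lions derivative" MVT in the measure variable. The key structural insight is that differentiation w.r.t. measure is really Fréchet differentiation of the lifted functional on $L^2(\tilde\Omega;\mathbb{R}^d)$. So the natural strategy: reduce the statement to a one-dimensional MVT for a real-valued function $g(\theta)$ of a scalar $\theta\in[0,1]$, obtained by interpolating simultaneously in both the space argument and the random-variable argument.

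**The plan.** Define the interpolation path. For the space variable, set $y_\theta = \bar y + \theta(y-\bar y)$. For the measure variable, use the lift: pick random variables $Z,\bar Z$ on $(\tilde\Omega,\tilde{\mathcal F},\tilde P)$ with $L_Z=\mu$, $L_{\bar Z}=\bar\mu$, and set $Z_\theta=\bar Z+\theta(Z-\bar Z)$, so $L_{Z_\theta}$ interpolates the measures. Then define
$$g(\theta) := f\big(y_\theta, L_{Z_\theta}\big), \qquad \theta\in[0,1].$$
Now $g(1)-g(0)=f(y,\mu)-f(\bar y,\bar\mu)$, and the ordinary MVT on $[0,1]$ gives a $\theta\in(0,1)$ with $g(1)-g(0)=g'(\theta)$. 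The whole content is therefore to compute $g'(\theta)$ by the chain rule and recognize the two resulting terms as exactly the gradient term and the $\tilde E\langle\partial_\mu f,\cdot\rangle$ term in the statement.

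**Computing the derivative.** I would split $g'(\theta)$ into a "space part" and a "measure part." Holding the measure argument momentarily fixed, the space contribution is $\langle\partial_x f(y_\theta,L_{Z_\theta}),\, y-\bar y\rangle$; in the statement the measure slot of $\partial_x f$ appears frozen at $\mu$ rather than at $L_{Z_\theta}$, so I would either argue that one may separate the two interpolations (differentiate first in space with measure fixed at $\mu$, then in measure with space fixed at $\bar y$, applying the scalar MVT twice and noting both give a value in $(0,1)$), or absorb the mismatch into the single $\theta$ by the stated form. For the measure part, the chain rule for Lions derivatives says $\tfrac{d}{d\theta}f(L_{Z_\theta}) = \tilde E\langle\partial_\mu f(L_{Z_\theta}, Z_\theta),\, \tfrac{d}{d\theta}Z_\theta\rangle = \tilde E\langle\partial_\mu f(L_{Z_\theta}, Z_\theta),\, Z-\bar Z\rangle$, since the Fréchet derivative $DF(Z_\theta)$ equals $\partial_\mu f(L_{Z_\theta})(Z_\theta)$ by the structure-of-the-gradient result (Theorem 6.5 of \cite{cardaliaguet2013}) recalled in the introduction, and $\tfrac{d}{d\theta}Z_\theta=Z-\bar Z$ in $L^2$. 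This yields precisely the term $\tilde E\langle\partial_\mu f(\bar y, L_{\bar Z+\theta(Z-\bar Z)}, \bar Z+\theta(Z-\bar Z)),\, Z-\bar Z\rangle$.

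**The main obstacle.** The delicate point is justifying that $g$ is continuously differentiable so that the scalar MVT applies, and in particular that $\theta\mapsto f(L_{Z_\theta})$ is differentiable with the claimed chain-rule derivative. This requires that $\theta\mapsto Z_\theta$ is a $C^1$ curve in $L^2(\tilde\Omega;\mathbb{R}^d)$ (immediate, as it is affine) and that the lifted functional $F$ is Fréchet $C^1$ (which is exactly the hypothesis that $f$ is of class $C^1$ in the sense defined earlier); then the classical chain rule for Fréchet-differentiable maps along a differentiable curve gives the result, and continuity of $g'$ follows from continuity of $\partial_x f$ and $\partial_\mu f$ together with continuity of $\theta\mapsto(y_\theta,Z_\theta)$. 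A secondary subtlety is the apparent independence of the two interpolation parameters: the statement uses a single $\theta$ for both the space and measure increments, so I would be careful to present the argument so that one common $\theta\in(0,1)$ is produced, either by the single scalar MVT applied to the joint interpolation $g$, or by noting the freezing of the off-variable in each term is consistent with applying the MVT coordinate-wise. Once $g'(\theta)$ is identified, matching terms to the statement is immediate.
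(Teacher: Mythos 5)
Your machinery is right (lift to $\mathcal{L}^2(\tilde{\Omega};\mathbb{R}^d)$, reduce to a scalar mean value theorem, identify the Gateaux derivative of the lifted functional via the Riesz representation and the structure-of-the-gradient theorem), but your primary construction does not prove the identity as stated. Differentiating the joint interpolation $g(\theta)=f\big(y_\theta, L_{Z_\theta}\big)$ produces
\[
g'(\theta)=\big\langle \partial_x f(y_\theta, L_{Z_\theta}),\, y-\bar{y}\big\rangle
+\tilde{E}\big\langle \partial_\mu f(y_\theta, L_{Z_\theta}, Z_\theta),\, Z-\bar{Z}\big\rangle,
\]
in which \emph{both} arguments move with $\theta$, whereas the lemma freezes the measure slot of $\partial_x f$ at $\mu$ and the space slot of $\partial_\mu f$ at $\bar{y}$. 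You notice this mismatch, but neither of your fallbacks closes it: applying the scalar MVT twice (once in space with measure fixed at $\mu$, once in measure with space fixed at $\bar{y}$) produces two distinct intermediate points $\theta_1,\theta_2$, not the single $\theta$ the statement asserts; and ``absorbing the mismatch into the single $\theta$'' is not an argument. Moreover, your joint route also forces you to invoke a full chain rule for $C^1$ Fr\'echet maps along a curve, which is more than the hypotheses here (mere existence of the derivatives) cleanly support.

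The paper's resolution is a small but essential trick: define the \emph{sum} of the two decoupled interpolations,
\[
\phi(t):=f\big(\bar{y}+t(y-\bar{y}),\,\mu\big)+F\big(\bar{y},\,\bar{Z}+t(Z-\bar{Z})\big),
\]
where $F(\bar{y},\cdot)$ is the lift of $f(\bar{y},\cdot)$. Since $F(\bar{y},Z)=f(\bar{y},\mu)$ and $F(\bar{y},\bar{Z})=f(\bar{y},\bar{\mu})$, the increment telescopes: $\phi(1)-\phi(0)=f(y,\mu)-f(\bar{y},\bar{\mu})$. A single application of the scalar mean value theorem to $\phi$ then yields one common $\theta$, and $\phi'(\theta)$ is exactly the right-hand side of the lemma, with the off-variable in each term frozen precisely as claimed (the second summand's derivative being the Gateaux derivative of $F(\bar{y},\cdot)$, identified with $\tilde{E}\langle \partial_\mu f(\bar{y}, L_{\bar{Z}+\theta(Z-\bar{Z})}, \bar{Z}+\theta(Z-\bar{Z})), Z-\bar{Z}\rangle$ exactly as you describe). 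With this one change of interpolating function, the rest of your argument goes through.
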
 
\begin{proof}
Define $\phi:[0,1]\to\mathbb{R}$ by,
\[
\phi(t):=f(\bar{y}+t(y-\bar{y}), \mu)+F(\bar{y}, \bar{Z}+t(Z-\bar{Z}))
\]
 where $F(\bar{y},\cdot)$ is an ``extension" of $f(\bar{y},\cdot)$ on $(\tilde{\Omega}, \tilde{\mathcal{F}}, \tilde{P} )$. For any $t_0\in(0,1)$, we have,
\begin{align*}
\lim_{h\to 0}& \frac{\phi(t_0+h)-\phi(t_0)}{h}=\lim_{h\to 0}\frac{f(\bar{y}+(t_0+h)(y-\bar{y}),\mu)-f(\bar{y}+t_0(y-\bar{y}),\mu)}{h}
\\
&\qquad +\lim_{h\to 0}\frac{F(\bar{y},\bar{Z}+(t_0+h)(Z-\bar{Z}))-F(\bar{y},\bar{Z}+t_0(Z-\bar{Z}))}{h} <\infty
\end{align*}
\textit{i.e.}, $\phi'(t_0)$ exists. Notice that the second term on the right hand side of the above expression is Gateaux derivative of $F$ at $\bar{Z}+t_0 (Z-\bar{Z})$ in the direction of $Z-\bar{Z}$.  Also, $\phi$ is continuous on $[0.1]$. Hence, by mean value theorem, there exists a $\theta\in[0, 1]$ such that $\phi(1)-\phi(0)= \phi'(\theta)$. Thus,
\begin{align*}
f(y&,\mu)-f(\bar{y},\bar{\mu})  =f(y,\mu)+F(\bar{y}, Z)-f(\bar{y},\mu)-F(\bar{y},\bar{Z})=\phi'(\theta)
\end{align*}
completes the proof.  
\end{proof}
As a special case of the above lemma, we obtain the following corollary. 
\begin{cor} \label{cor:MVT}
Let $f:\mathbb{R}^d\times \mathcal{P}_2(\mathbb{R}^d) \to \mathbb{R}$ be a  function such that its derivative $\partial_x f: \mathbb{R}^d\times \mathcal{P}_2(\mathbb{R}^d)\to\mathbb{R}^d$ and its measure derivative $\partial_\mu f:\mathbb{R}^d\times\mathcal{P}_2(\mathbb{R}^d)\times\mathbb{R}^d \to \mathbb{R}^d$ exists.  
Then, there exists a $\theta \in (0,1)$  such that, 
\begin{align*}
f\Big(y, \frac{1}{N}&\sum_{j=1}^N \delta_{z^j} \Big)-f\Big(\bar{y}, \frac{1}{N}\sum_{j=1}^N \delta_{\bar{z}^j} \Big) =\Big\langle \partial_x f\Big( \bar{y}+ \theta( y-\bar{y}), \frac{1}{N}\sum_{j=1}^N \delta_{z^j}\Big), y-\bar{y}\Big\rangle
\\
&+\frac{1}{N}\sum_{j=1}^N \Big\langle \partial_\mu f
\Big(\bar{y}, \frac{1}{N}\sum_{j=1}^N \delta_{\bar{z}^j+\theta (z^j-\bar{z}^j)}, \bar{z}^j+\theta (z^j-\bar{z}^j) \Big),   z^j- \bar{z}^j  \Big\rangle.
\end{align*}
\end{cor}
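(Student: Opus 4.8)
The plan is to derive this corollary as a direct specialization of Lemma~\ref{lem:MVT} to the case where both measures are empirical, that is $\mu = \frac{1}{N}\sum_{j=1}^N \delta_{z^j}$ and $\bar{\mu} = \frac{1}{N}\sum_{j=1}^N \delta_{\bar{z}^j}$. Since Lemma~\ref{lem:MVT} holds for \emph{any} atomless Polish probability space and \emph{any} pair of random variables $Z,\bar{Z}$ realizing the two prescribed laws, the whole task reduces to producing a convenient such realization and then evaluating the abstract expectation $\tilde{E}\langle\cdots\rangle$ explicitly.

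The first term requires no work: substituting $\mu = \frac{1}{N}\sum_{j=1}^N \delta_{z^j}$ into $\langle \partial_x f(\bar{y}+\theta(y-\bar{y}),\mu), y-\bar{y}\rangle$ produces the space-derivative term of the corollary verbatim. The content is entirely in the measure-derivative term, which I would handle by choosing a specific coupling. Take the atomless probability space $(\tilde{\Omega},\tilde{\mathcal{F}},\tilde{P}) = ([0,1], \mathcal{B}([0,1]), \mathrm{Leb})$, partition $[0,1]$ into the $N$ subintervals $I_j := [\tfrac{j-1}{N}, \tfrac{j}{N})$, and define the step random variables $Z := \sum_{j=1}^N z^j \mathbbm{1}_{I_j}$ and $\bar{Z} := \sum_{j=1}^N \bar{z}^j \mathbbm{1}_{I_j}$. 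Since each $I_j$ has Lebesgue measure $1/N$, one checks immediately that $L_Z = \frac{1}{N}\sum_{j=1}^N \delta_{z^j} = \mu$ and $L_{\bar{Z}} = \frac{1}{N}\sum_{j=1}^N \delta_{\bar{z}^j} = \bar{\mu}$, so this pair is admissible for the lemma.

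With this realization the convex interpolant is again a step function, $\bar{Z}+\theta(Z-\bar{Z}) = \sum_{j=1}^N (\bar{z}^j + \theta(z^j - \bar{z}^j))\mathbbm{1}_{I_j}$, whose law is $\frac{1}{N}\sum_{j=1}^N \delta_{\bar{z}^j + \theta(z^j - \bar{z}^j)}$. Consequently the expectation in Lemma~\ref{lem:MVT} splits over the partition, and because the integrand is constant on each $I_j$ I would evaluate
\begin{align*}
&\tilde{E}\big\langle \partial_\mu f(\bar{y}, L_{\bar{Z}+\theta(Z-\bar{Z})}, \bar{Z}+\theta(Z-\bar{Z})), Z - \bar{Z}\big\rangle
\\
&\qquad= \frac{1}{N}\sum_{j=1}^N \big\langle \partial_\mu f(\bar{y}, \tfrac{1}{N}\textstyle\sum_{k=1}^N \delta_{\bar{z}^k + \theta(z^k - \bar{z}^k)}, \bar{z}^j + \theta(z^j - \bar{z}^j)), z^j - \bar{z}^j\big\rangle,
\end{align*}
using $\tilde{P}(I_j) = 1/N$. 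This is exactly the measure-derivative term claimed, so adding the two terms finishes the proof.

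The only genuine subtlety — and the single point I would make sure to address — is that the lemma insists on an \emph{atomless} probability space, which rules out the most naive realization of an empirical law by a uniform variable on $\{1,\dots,N\}$. The step-function construction on $[0,1]$ sidesteps this cleanly, and the same $\theta$ produced by the lemma is inherited by the corollary. Everything else is a routine change-of-variables computation splitting an integral over $[0,1]$ into $N$ equal pieces.
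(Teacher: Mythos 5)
Your proposal is correct and follows essentially the same route as the paper: specialize Lemma \ref{lem:MVT} by realizing the two empirical measures as step random variables constant on a partition of an atomless space into $N$ pieces of probability $1/N$ (the paper uses an abstract partition $\{\tilde{\Omega}_j\}$, you use subintervals of $[0,1]$, which is the same idea), and then evaluate the expectation $\tilde{E}\langle\cdots\rangle$ as the average over the $N$ pieces.
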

\begin{proof}
For measures  $\mu=\frac{1}{N}\sum_{j=1}^N \delta_{z^j}$ and $\bar{\mu}=\frac{1}{N}\sum_{j=1}^N \delta_{\bar{z}^j}$,  consider an atomless, Polish probability space $(\tilde{\Omega},\tilde{\mathcal{F}},\tilde{P})$  and define  random variables $Z:\tilde{\Omega} \to \mathbb{R}^d$ and $\bar{Z}:\tilde{\Omega}\to \mathbb{R}^d$ by, 
\begin{align*}
Z:=\sum_{j=1}^N z^j\mathbbm{1}_{\tilde{\Omega}_j}, \,\,\bar{Z}:=\sum_{j=1}^N \bar{z}^j\mathbbm{1}_{\tilde{\Omega}_j}
\end{align*}
where $\{\tilde{\Omega}_1,\ldots,\tilde{\Omega}_N\}$ is a partition of $\tilde{\Omega}$ satisfying $\tilde{P}(\tilde{\Omega}_j)=\frac{1}{N}$ for any $j=1,\ldots,N$.
Clearly,  laws of $Z$ and $\bar{Z}$ satisfy $L_Z=\mu$ and $L_{\bar{Z}}=\bar{\mu}$. Also, 
\[
L_{\bar{Z}+\theta(Z-\bar{Z})}=\frac{1}{N}\sum_{j=1}^N \delta_{\bar{z}^j+\theta (z^j-\bar{z}^j)}.
\]
 By Lemma \ref{lem:MVT},  
\begin{align*}
f\Big(y, & \frac{1}{N}\sum_{j=1}^N \delta_{z^j} \Big)-f\Big(\bar{y}, \frac{1}{N}\sum_{j=1}^N \delta_{\bar{z}^j} \Big)= \Big\langle \partial_x f\Big( \bar{y}+ \theta( y-\bar{y}), \frac{1}{N}\sum_{j=1}^N \delta_{z^j}\Big), y-\bar{y}\Big\rangle
\\
&+\tilde{E}\Big\langle \partial_\mu f
\Big(\bar{y}, \frac{1}{N}\sum_{j=1}^N \delta_{\bar{z}^j+\theta (z^j-\bar{z}^j)}, \sum_{j=1}^N \{\bar{z}_j+\theta (z^j- \bar{z}^j)\} \mathbbm{1}_{\tilde{\Omega}_j}\Big), \sum_{j=1}^N  (z^j- \bar{z}^j) \mathbbm{1}_{\tilde{\Omega}_j} \Big\rangle
\\
&= \Big\langle \partial_x f\Big( \bar{y}+ \theta( y-\bar{y}), \frac{1}{N}\sum_{j=1}^N \delta_{z^j}\Big), y-\bar{y}\Big\rangle
\\
&\qquad+\frac{1}{N}\sum_{j=1}^N \Big\langle \partial_\mu f
\Big(\bar{y}, \frac{1}{N}\sum_{j=1}^N \delta_{\bar{z}^j+\theta (z^j-\bar{z}^j)}, \bar{z}^j+\theta (z^j-\bar{z}^j) \Big),   z^j- \bar{z}^j  \Big\rangle
\end{align*}
which completes the proof. 
\end{proof}
\begin{lem}\label{lem:f:rate}
Let $f:\mathbb{R}^d\times \mathcal{P}_2(\mathbb{R}^d) \to \mathbb{R}$ be a  function such that its derivative $\partial_x f: \mathbb{R}^d\times \mathcal{P}_2(\mathbb{R}^d)\to\mathbb{R}^d$ and measure derivative $\partial_\mu f:\mathbb{R}^d\times\mathcal{P}_2(\mathbb{R}^d)\times\mathbb{R}^d \to \mathbb{R}^d$ satisfy Lipschitz conditions \textit{i.e.}, there exists a constant $L>0$ such that,  
 \begin{align*}
| \partial_x f(x,\mu)-\partial_x f(\bar{x},\bar{\mu})| & \leq L\big\{ |x-\bar{x}| + \mathcal{W}_2(\mu,\bar{\mu}) \big\},
\\
| \partial_\mu f(x,\mu, y)-\partial_\mu f(\bar{x}, \bar{\mu}, \bar{y})| & \leq L\big\{|x-\bar{x}|+ \mathcal{W}_2(\mu,\bar{\mu}) +  |y-\bar{y}| \big\},
 \end{align*}
  for all $x, y,\bar{x}, \bar{y}\in\mathbb{R}^d$ and $\mu,\bar{\mu} \in\mathcal{P}_2(\mathbb{R}^d)$.
Then, 
\begin{align*}
 f\Big(  x^i,\frac{1}{N} & \sum_{j=1}^N\delta_{x^j}\Big)  -f\Big(\bar{x}^i,\frac{1}{N}\sum_{j=1}^N\delta_{\bar{x}^j}\Big)-\Big\langle \partial_x f\Big(\bar{x}^i,\frac{1}{N}\sum_{j=1}^N\delta_{\bar{x}^j}\Big), x^i-\bar{x}^i \Big \rangle  
\\
&\qquad- \frac{1}{N} \sum_{j=1}^N \Big\langle \partial_\mu f\Big(\bar{x}^i,\frac{1}{N}\sum_{j=1}^N \delta_{\bar{x}^j}, \bar{x}^j\Big), x^j-\bar{x}^j \Big \rangle
\\
&\leq  K |x^i-\bar{x}^i|^2 + K \frac{1}{N}\sum_{j=1}^N|x^j-\bar{x}^j|^2
\end{align*}
for every $i \in\{1,\ldots,N\}$ where the constant $K>0$ does not depend on $N\in\mathbb{N}$.
\end{lem}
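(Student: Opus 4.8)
The plan is to expand the finite difference on the left exactly via the mean value theorem of Corollary~\ref{cor:MVT}, and then subtract the two ``base-point'' terms already present in the statement so that what remains is a difference of derivatives evaluated at interpolated versus base arguments. Writing $\mu := \frac{1}{N}\sum_{j=1}^N \delta_{x^j}$ and $\bar{\mu} := \frac{1}{N}\sum_{j=1}^N \delta_{\bar{x}^j}$, Corollary~\ref{cor:MVT} furnishes a $\theta \in (0,1)$ for which the left-hand side of the asserted inequality equals
\begin{align*}
&\Big\langle \partial_x f\big(\bar{x}^i + \theta(x^i - \bar{x}^i), \mu\big) - \partial_x f\big(\bar{x}^i, \bar{\mu}\big),\, x^i - \bar{x}^i\Big\rangle \\
&\quad + \frac{1}{N}\sum_{j=1}^N \Big\langle \partial_\mu f\big(\bar{x}^i, \bar\nu_\theta, \bar{x}^j + \theta(x^j - \bar{x}^j)\big) - \partial_\mu f\big(\bar{x}^i, \bar{\mu}, \bar{x}^j\big),\, x^j - \bar{x}^j\Big\rangle,
\end{align*}
where $\bar\nu_\theta := \frac{1}{N}\sum_{k=1}^N \delta_{\bar{x}^k + \theta(x^k - \bar{x}^k)}$.

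For the space term I would apply the Lipschitz hypothesis on $\partial_x f$ to bound its norm by $L\big(\theta|x^i - \bar{x}^i| + \mathcal{W}_2(\mu, \bar{\mu})\big)$. The one estimate worth isolating is the empirical Wasserstein bound: matching labels $x^j \leftrightarrow \bar{x}^j$ furnishes a coupling, whence $\mathcal{W}_2^2(\mu, \bar{\mu}) \leq \frac{1}{N}\sum_{j=1}^N |x^j - \bar{x}^j|^2$. Combining this with Cauchy--Schwarz against $x^i - \bar{x}^i$ and then Young's inequality to split the cross term $\mathcal{W}_2(\mu, \bar{\mu})\,|x^i - \bar{x}^i|$ delivers a bound of the form $K|x^i - \bar{x}^i|^2 + K\frac{1}{N}\sum_{j=1}^N |x^j - \bar{x}^j|^2$.

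For the measure term I would proceed analogously. The Lipschitz hypothesis on $\partial_\mu f$ bounds each summand's derivative difference by $L\big(\mathcal{W}_2(\bar\nu_\theta, \bar{\mu}) + \theta|x^j - \bar{x}^j|\big)$, and the same label-matching coupling gives $\mathcal{W}_2(\bar\nu_\theta, \bar{\mu}) \leq \big(\frac{1}{N}\sum_{k=1}^N |x^k - \bar{x}^k|^2\big)^{1/2}$. Setting $a_j := |x^j - \bar{x}^j|$ and $A := \frac{1}{N}\sum_{k=1}^N a_k^2$, the measure term is then controlled by $L A^{1/2}\,\frac{1}{N}\sum_{j=1}^N a_j + L\frac{1}{N}\sum_{j=1}^N a_j^2$. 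The only nonroutine observation here is that $\frac{1}{N}\sum_{j=1}^N a_j \leq A^{1/2}$ by Cauchy--Schwarz, so both pieces collapse to a constant times $A = \frac{1}{N}\sum_{j=1}^N |x^j - \bar{x}^j|^2$, which is exactly the second term on the right of the claim.

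The main obstacle is conceptual rather than computational: one must invoke Corollary~\ref{cor:MVT} with the $\theta$-interpolated empirical measure $\bar\nu_\theta$ sitting in the measure-derivative slot, and then recognize that both empirical Wasserstein distances appearing are dominated by the mean-square displacement $\frac{1}{N}\sum_j |x^j - \bar{x}^j|^2$ through the matched-label coupling. Once that coupling estimate is in hand, the remaining work is a routine assembly of the Lipschitz bounds, Cauchy--Schwarz, and Young's inequality.
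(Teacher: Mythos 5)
Your proposal is correct and follows essentially the same route as the paper: expand via Corollary \ref{cor:MVT}, subtract the base-point terms, apply the Lipschitz hypotheses together with Cauchy--Schwarz and Young's inequality, and control both empirical Wasserstein distances by $\frac{1}{N}\sum_{j=1}^N|x^j-\bar{x}^j|^2$ using the matched-label coupling, exactly as in the paper's proof.
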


\begin{proof} By Corollary \ref{cor:MVT}, Cauchy-Schwarz inequality, assumptions on $f$ and Young's inequality,  one obtains,
\begin{align*}
f\Big( & x^i,\frac{1}{N}\sum_{j=1}^N\delta_{x^j}\Big)  -f\Big(\bar{x}^i,\frac{1}{N}\sum_{j=1}^N\delta_{\bar{x}^j}\Big)-\Big\langle \partial_x f\Big(\bar{x}^i,\frac{1}{N}\sum_{j=1}^N\delta_{\bar{x}^j}\Big), x^i-\bar{x}^i \Big \rangle  
\\
&- \frac{1}{N} \sum_{j=1}^N \Big\langle \partial_\mu f\Big(\bar{x}^i,\frac{1}{N}\sum_{j=1}^N \delta_{\bar{x}^j}, \bar{x}^j\Big), x^j-\bar{x}^j \Big \rangle
\\
=&\Big\langle \partial_x  f\Big(\bar{x}^i+\theta(x^i-\bar{x}^i),\frac{1}{N}\sum_{j=1}^N\delta_{x^j}\Big)-\partial_x f\Big(\bar{x}^i,\frac{1}{N}\sum_{j=1}^N\delta_{\bar{x}^j}\Big), x^i-\bar{x}^i \Big \rangle 
\\
&+\frac{1}{N} \sum_{j=1}^N \Big\langle \partial_\mu f\Big(\bar{x}^i,\frac{1}{N}\sum_{j=1}^N \delta_{\bar{x}^j+\theta (x^j-\bar{x}^j)}, \bar{x}^j+\theta(x^j-\bar{x}^j)\Big)
\\
& \qquad- \partial_\mu f\Big(\bar{x}^i,\frac{1}{N}\sum_{j=1}^N \delta_{\bar{x}^j}, \bar{x}^j\Big), x^j-\bar{x}^j \Big \rangle 
\\
\leq & K |x^i-\bar{x}^i|^2 + K \mathcal{W}_2 \Big(\frac{1}{N} \sum_{j=1}^N\delta_{x^j},\frac{1}{N} \sum_{j=1}^N\delta_{\bar{x}^j}\Big)^2
\\
& \qquad+ K \mathcal{W}_2 \Big(\frac{1}{N} \sum_{j=1}^N\delta_{\bar{x}^j+\theta(x^j-\bar{x}^j)},\frac{1}{N} \sum_{j=1}^N\delta_{\bar{x}^j}\Big)^2 +K \frac{1}{N} \sum_{j=1}^N|x^j-\bar{x}^j|^2. 
\end{align*}
The proof is completed by the following estimate on Wasserstein metric,
\begin{align*}
& \mathcal{W}_2 \Big(\frac{1}{N} \sum_{j=1}^N\delta_{x^j},\frac{1}{N} \sum_{j=1}^N\delta_{\bar{x}^j}\Big)^2 
\\
& =\inf_{\pi}\Big\{\int_{\mathbb{R}^d}\int_{\mathbb{R}^d} |x-y|^2 \pi(dx,dy):  \frac{1}{N} \sum_{j=1}^N\delta_{x^j} \mbox{ and } \frac{1}{N} \sum_{j=1}^N\delta_{\bar{x}^j} \mbox{ are marginals of } \pi \Big\}
\\
& \leq \int_{\mathbb{R}^d}\int_{\mathbb{R}^d} |x-y|^2 \frac{1}{N}  \sum_{j=1}^N\delta_{{x}^j}(dx) \delta_{\bar{x}^j}(dy) = \frac{1}{N}\sum_{j=1}^N|x^j-\bar{x}^j|^2.
\end{align*} 
\end{proof}
\begin{lem}\label{lem:f:rate:local}
Let $f:\mathbb{R}^d\times \mathcal{P}_2(\mathbb{R}^d) \to \mathbb{R}$ be a  function such that its derivative $\partial_x f: \mathbb{R}^d\times \mathcal{P}_2(\mathbb{R}^d)\to\mathbb{R}^d$ and measure derivative $\partial_\mu f:\mathbb{R}^d\times\mathcal{P}_2(\mathbb{R}^d)\times\mathbb{R}^d \to \mathbb{R}^d$ satisfy polynomial Lipschitz condition \textit{i.e.}, there exists a constant  $L>0$ such that,  
 \begin{align*}
| \partial_x f(x,\mu)-\partial_x f(\bar{x},\bar{\mu})| & \leq L\big\{ (1+|x|+|\bar{x}|)^{\rho/2}|x-\bar{x}| + \mathcal{W}_2(\mu,\bar{\mu}) \big\},
\\
| \partial_\mu f(x,\mu, y)-\partial_\mu f(\bar{x}, \bar{\mu}, \bar{y})| & \leq L\big\{ (1+|x|+|\bar{x}|)^{\rho/2+1}|x-\bar{x}|+ \mathcal{W}_2(\mu,\bar{\mu}) +  |y-\bar{y}| \big\},
 \end{align*}
  for all $x, y,\bar{x}, \bar{y}\in\mathbb{R}^d$ and $\mu,\bar{\mu} \in\mathcal{P}_2(\mathbb{R}^d)$.
Then, 
\begin{align*}
 f\Big(  x^i,\frac{1}{N} & \sum_{j=1}^N\delta_{x^j}\Big)  -f\Big(\bar{x}^i,\frac{1}{N}\sum_{j=1}^N\delta_{\bar{x}^j}\Big)-\Big\langle \partial_x f\Big(\bar{x}^i,\frac{1}{N}\sum_{j=1}^N\delta_{\bar{x}^j}\Big), x^i-\bar{x}^i \Big \rangle  
\\
&\qquad- \frac{1}{N} \sum_{j=1}^N \Big\langle \partial_\mu f\Big(\bar{x}^i,\frac{1}{N}\sum_{j=1}^N \delta_{\bar{x}^j}, \bar{x}^j\Big), x^j-\bar{x}^j \Big \rangle
\\
&\leq  K (1+|x^i|+|\bar{x}^i|)^{\rho/2} |x^i-\bar{x}^i|^2 + K \frac{1}{N}\sum_{j=1}^N|x^j-\bar{x}^j|^2
\end{align*}
for every $i \in\{1,\ldots,N\}$ where the constant $K>0$ does not depend on $N\in\mathbb{N}$.
\end{lem}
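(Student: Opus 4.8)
The plan is to mimic the proof of Lemma \ref{lem:f:rate} essentially verbatim, the only new ingredient being the careful treatment of the polynomial growth factor appearing in the Lipschitz estimate for $\partial_x f$. First I would apply Corollary \ref{cor:MVT} to the empirical measures $\frac{1}{N}\sum_{j=1}^N\delta_{x^j}$ and $\frac{1}{N}\sum_{j=1}^N\delta_{\bar{x}^j}$. This rewrites the left-hand side (the increment of $f$ minus its two first-order terms) exactly as a single $\partial_x f$-difference evaluated along the segment $\bar{x}^i+\theta(x^i-\bar{x}^i)$, plus an averaged $\partial_\mu f$-difference evaluated along the segments $\bar{x}^j+\theta(x^j-\bar{x}^j)$, for some $\theta\in(0,1)$. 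I would then bound each inner product by Cauchy--Schwarz and invoke the polynomial Lipschitz hypotheses on $\partial_x f$ and $\partial_\mu f$.

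For the $\partial_x f$ term, the Lipschitz estimate produces the factor $(1+|\bar{x}^i+\theta(x^i-\bar{x}^i)|+|\bar{x}^i|)^{\rho/2}$. Since $\theta\in(0,1)$, one has $|\bar{x}^i+\theta(x^i-\bar{x}^i)|\le |x^i|+2|\bar{x}^i|$, so this factor is dominated by $K(1+|x^i|+|\bar{x}^i|)^{\rho/2}$. After Cauchy--Schwarz this yields a term $K(1+|x^i|+|\bar{x}^i|)^{\rho/2}|x^i-\bar{x}^i|^2$ together with a cross term $K\,\mathcal{W}_2\big(\frac{1}{N}\sum\delta_{x^j},\frac{1}{N}\sum\delta_{\bar{x}^j}\big)\,|x^i-\bar{x}^i|$. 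The latter is handled by Young's inequality and the empirical-measure bound $\mathcal{W}_2\big(\frac{1}{N}\sum\delta_{x^j},\frac{1}{N}\sum\delta_{\bar{x}^j}\big)^2\le \frac{1}{N}\sum_{j=1}^N|x^j-\bar{x}^j|^2$ established in the proof of Lemma \ref{lem:f:rate}, giving contributions of the form $K|x^i-\bar{x}^i|^2+K\frac{1}{N}\sum_{j=1}^N|x^j-\bar{x}^j|^2$, both of which are absorbed into the right-hand side after using $(1+|x^i|+|\bar{x}^i|)^{\rho/2}\ge 1$.

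The crucial simplification occurs in the $\partial_\mu f$ term: because the first spatial argument is frozen at $\bar{x}^i$ in both evaluations, the polynomial prefactor $(1+|x|+|\bar{x}|)^{\rho/2+1}|x-\bar{x}|$ in the Lipschitz hypothesis vanishes identically. Hence this term is estimated precisely as in the globally Lipschitz Lemma \ref{lem:f:rate}, namely by Cauchy--Schwarz followed by the bound $|\partial_\mu f(\bar{x}^i,\cdot,\bar{x}^j+\theta(x^j-\bar{x}^j))-\partial_\mu f(\bar{x}^i,\cdot,\bar{x}^j)|\le L\{\mathcal{W}_2(\frac{1}{N}\sum\delta_{\bar{x}^j+\theta(x^j-\bar{x}^j)},\frac{1}{N}\sum\delta_{\bar{x}^j})+\theta|x^j-\bar{x}^j|\}$, and then Young's inequality together with the empirical Wasserstein estimate, producing only a multiple of $\frac{1}{N}\sum_{j=1}^N|x^j-\bar{x}^j|^2$.

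The main (and essentially only) obstacle is the bookkeeping of the polynomial factor along the interpolation segment in the $\partial_x f$ term; once it is bounded by $K(1+|x^i|+|\bar{x}^i|)^{\rho/2}$ via $\theta\in(0,1)$, the remainder is a routine repetition of the global-Lipschitz argument. Collecting the two groups of bounds yields the asserted estimate and completes the proof.
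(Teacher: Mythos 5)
Your proposal is correct and matches the paper's intent exactly: the paper's proof of this lemma is literally the one-line remark that it ``follows by adapting the arguments of Lemma \ref{lem:f:rate}'', and your write-up carries out precisely that adaptation via Corollary \ref{cor:MVT}, the empirical Wasserstein bound, and Young's inequality. Your two key observations --- that the interpolation point satisfies $|\bar{x}^i+\theta(x^i-\bar{x}^i)|\le |x^i|+|\bar{x}^i|$ so the polynomial prefactor is controlled by $K(1+|x^i|+|\bar{x}^i|)^{\rho/2}$, and that the $\partial_\mu f$ term sees no polynomial factor because its first argument is frozen at $\bar{x}^i$ --- are exactly the points where the adaptation is needed.
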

\begin{proof}
The proof follows by adapting the arguments of Lemma \ref{lem:f:rate}. 
\end{proof}
\begin{lem}\label{lem:sigma:rate}
Let Assumptions \ref{as:initial} to \ref{as:sig:der} be satisfied. Then,  for each $i\in\{1,\ldots,N\}$, 
\[
E|\sigma(X_s^{i,N,n},\mu_s^{X,N,n})-\tilde{\sigma}(s,X_{\kappa_n(s)}^{i,N,n},\mu_{\kappa_n(s)}^{X,N,n})|^2 \leq K n^{-2}
\]
for any $s\in[0,T]$ and $n,N\in\mathbb{N}$ where the constant $K>0$ does not depend on $n,N$. 
\end{lem}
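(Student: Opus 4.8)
The plan is to estimate the error componentwise and to reveal $\Lambda_1$ and $\Lambda_2$ as the exact leading terms of a first-order expansion. Fix $k\in\{1,\ldots,d\}$, $l\in\{1,\ldots,m\}$ and apply the measure mean value expansion of Lemma \ref{lem:f:rate} to the scalar map $f=\sigma^{(k,l)}$ — admissible since, by Assumption \ref{as:sig:der}, both $\partial_x\sigma^{(k,l)}$ and $\partial_\mu\sigma^{(k,l)}$ are Lipschitz — at the points $x^j=X_s^{j,N,n}$, $\bar{x}^j=X_{\kappa_n(s)}^{j,N,n}$. This expresses $\sigma^{(k,l)}(X_s^{i,N,n},\mu_s^{X,N,n})-\sigma^{(k,l)}(X_{\kappa_n(s)}^{i,N,n},\mu_{\kappa_n(s)}^{X,N,n})$ as the sum of a first-order space term $\langle\partial_x\sigma^{(k,l)}(X_{\kappa_n(s)}^{i,N,n},\mu_{\kappa_n(s)}^{X,N,n}),X_s^{i,N,n}-X_{\kappa_n(s)}^{i,N,n}\rangle$, a first-order measure term $\frac1N\sum_{j=1}^N\langle\partial_\mu\sigma^{(k,l)}(X_{\kappa_n(s)}^{i,N,n},\mu_{\kappa_n(s)}^{X,N,n},X_{\kappa_n(s)}^{j,N,n}),X_s^{j,N,n}-X_{\kappa_n(s)}^{j,N,n}\rangle$, and a remainder $R^{(k,l)}$ whose absolute value is dominated by $K|X_s^{i,N,n}-X_{\kappa_n(s)}^{i,N,n}|^2+\frac KN\sum_j|X_s^{j,N,n}-X_{\kappa_n(s)}^{j,N,n}|^2$.

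Next I would insert the scheme dynamics $X_s^{j,N,n}-X_{\kappa_n(s)}^{j,N,n}=\int_{\kappa_n(s)}^s b_n\,dr+\sum_{l_1}\int_{\kappa_n(s)}^s\tilde\sigma^{(l_1)}\,dW_r^{(l_1),j}$ into both first-order terms and use the decomposition $\tilde\sigma^{(l_1)}=\sigma^{(l_1)}(X_{\kappa_n(s)}^{\,\cdot},\mu_{\kappa_n(s)}^{X,N,n})+\Lambda_1^{(l_1)}+\Lambda_2^{(l_1)}$. Because $\kappa_n(r)=\kappa_n(s)$ on $[\kappa_n(s),s]$, the $\sigma^{(l_1)}$-part of $\tilde\sigma^{(l_1)}$ reproduces exactly $\Lambda_1^{(k,l)}$ from the space term and exactly $\Lambda_2^{(k,l)}$ from the measure term. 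Subtracting $\tilde\sigma^{(k,l)}=\sigma^{(k,l)}(X_{\kappa_n(s)}^{i,N,n},\mu_{\kappa_n(s)}^{X,N,n})+\Lambda_1^{(k,l)}+\Lambda_2^{(k,l)}$ therefore leaves five families of terms: the remainder $R^{(k,l)}$; the two drift corrections $\langle\partial_x\sigma^{(k,l)},\int_{\kappa_n(s)}^s b_n\,dr\rangle$ and $\frac1N\sum_j\langle\partial_\mu\sigma^{(k,l)},\int_{\kappa_n(s)}^s b_n\,dr\rangle$; and the two higher-order Milstein corrections $\sum_{l_1}\int_{\kappa_n(s)}^s\langle\partial_x\sigma^{(k,l)},\Lambda_1^{(l_1)}(r)+\Lambda_2^{(l_1)}(r)\rangle\,dW_r^{(l_1),i}$ together with its $\frac1N\sum_j$ measure analogue.

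Then each family would be bounded in $L^2$ by $Kn^{-2}$. For $R^{(k,l)}$, Lemma \ref{lem:f:rate} and Corollary \ref{cor:one:step} at $p=4$ give $E|R^{(k,l)}|^2\le KE|X_s^{i,N,n}-X_{\kappa_n(s)}^{i,N,n}|^4+\frac KN\sum_jE|X_s^{j,N,n}-X_{\kappa_n(s)}^{j,N,n}|^4\le Kn^{-2}$. For the drift corrections, the piecewise-constant nature of $\kappa_n$ makes the integrand constant on $[\kappa_n(s),s]$, so each integral is bounded by $n^{-1}|b_n(X_{\kappa_n(s)}^{i,N,n},\mu_{\kappa_n(s)}^{X,N,n})|$ (resp.\ its $j$-version); since $\partial_x\sigma$ and $\partial_\mu\sigma$ are bounded (Remark \ref{rem:sigma:growth}) and $E|b_n|^2\le K$ by Remark \ref{rem:bn:growth} with Lemma \ref{lem:moment:bound}, these are $O(n^{-2})$. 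For the Milstein corrections, the Burkholder--Davis--Gundy inequality (Itô isometry), boundedness of the derivatives, and Corollaries \ref{cor:lambda_1} and \ref{cor:lambda2} at $p=2$ (giving $E|\Lambda_1(r)|^2,E|\Lambda_2(r)|^2\le Kn^{-1}$) reduce them to $E\int_{\kappa_n(s)}^s|\Lambda_1(r)+\Lambda_2(r)|^2\,dr\le n^{-1}\cdot Kn^{-1}=Kn^{-2}$; for the measure version I would first use $|\frac1N\sum_j a_j|^2\le\frac1N\sum_j|a_j|^2$ and then apply the isometry to each $j$ separately, since the integrands are adapted via $\kappa_n(r)=\kappa_n(s)$. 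Summing over the finitely many indices $(k,l)$ finishes the proof.

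The main obstacle I anticipate is the measure-derivative bookkeeping in the second paragraph: one must verify that when the $j$-th increment $X_s^{j,N,n}-X_{\kappa_n(s)}^{j,N,n}$ is expanded against the independent Brownian motion $W^{(l_1),j}$, its leading martingale part coincides \emph{exactly} with $\Lambda_2^{(k,l)}$, and that the residual iterated stochastic integrals remain adapted so that the isometry can be applied termwise in $j$ without invoking independence across particles. A secondary technical point is that Corollary \ref{cor:one:step} is used at $p=4$ and $E|b_n|^2$ must be controlled, so the moment exponent $p$ in Assumption \ref{as:initial} has to be taken large enough relative to $\rho$; this is already guaranteed by the standing hypotheses underlying Lemma \ref{lem:moment:bound}.
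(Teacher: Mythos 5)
Your proposal is correct and follows essentially the same route as the paper's proof: decompose componentwise via the first-order expansion of Lemma \ref{lem:f:rate}, insert the scheme dynamics so that the martingale parts of the space and measure terms reproduce $\Lambda_1^{(k,l)}$ and $\Lambda_2^{(k,l)}$ exactly (using $\kappa_n(r)=\kappa_n(s)$ on $[\kappa_n(s),s]$), and then bound the Taylor remainder by Corollary \ref{cor:one:step}, the drift corrections by the boundedness of $\partial_x\sigma$, $\partial_\mu\sigma$ and the moment bounds, and the iterated-integral corrections by the It\^o isometry together with Corollaries \ref{cor:lambda_1} and \ref{cor:lambda2}. The technical points you flag (adaptedness of the integrands, the termwise-in-$j$ isometry after $|\frac1N\sum_j a_j|^2\le\frac1N\sum_j|a_j|^2$, and the need for sufficiently high moments) are handled in the paper exactly as you anticipate.
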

\begin{proof} 
From equation \eqref{eq:sigma:tilde}, 
\begin{align} \label{eq:sigma:1}
\sigma^{(u, v)}&(X_s^{i,N,n},\mu_s^{X,N,n})-\tilde{\sigma}^{(u,v)}(s,X_{\kappa_n(s)}^{i,N,n},\mu_{\kappa_n(s)}^{X,N,n}) \notag
\\
 =& \sigma^{(u, v)}(X_s^{i,N,n},\mu_s^{X,N,n})-\sigma^{(u, v)}(X_{\kappa_n(s)}^{i,N,n},\mu_{\kappa_n(s)}^{X,N,n})  \notag
\\
& -  \Big\langle \partial_x \sigma^{(u,v)} (X_{\kappa_n(s)}^{i,N,n},\mu_{\kappa_n(s)}^{X,N,n}), X_{s}^{i,N,n}-X_{\kappa_n(s)}^{i,N,n} \Big\rangle \notag
\\
&  - \frac{1}{N} \sum_{j=1}^N \Big\langle \partial_\mu \sigma^{(u,v)} \big(X_{\kappa_n(s)}^{i,N,n},\mu_{\kappa_n(s)}^{X,N,n}, X_{\kappa_n(s)}^{j,N,n} \big), X_{s}^{j,N,n}-X_{\kappa_n(s)}^{j,N,n} \Big\rangle \notag
\\
&  + \Big\langle \partial_x \sigma^{(u,v)} (X_{\kappa_n(s)}^{i,N,n},\mu_{\kappa_n(s)}^{X,N,n}), X_{s}^{i,N,n}-X_{\kappa_n(s)}^{i,N,n} \Big\rangle \notag
\\
&  + \frac{1}{N} \sum_{j=1}^N \Big\langle \partial_\mu \sigma^{(u,v)} \big(X_{\kappa_n(s)}^{i,N,n},\mu_{\kappa_n(s)}^{X,N,n}, X_{\kappa_n(s)}^{j,N,n} \big), X_{s}^{j,N,n}-X_{\kappa_n(s)}^{j,N,n} \Big\rangle  \notag
\\ 
&- \Lambda_1^{(u, v)} \big(s,X_{\kappa_n(s)}^{i,N,n}, \mu_{\kappa_n(s)}^{X,N,n}\big) - \Lambda_2^{(u, v)} \big(s,X_{\kappa_n(s)}^{i,N,n}, \mu_{\kappa_n(s)}^{X,N,n}\big) 
\end{align} 
almost surely for any $s\in[0,T]$ and $n,N\in \mathbb{N}$. Further, from equation \eqref{eq:scheme}, 
\begin{align*}
&X_s^{i,N, n}-X_{\kappa_n(s)}^{i, N, n}= \int_{\kappa_n(s)}^s b_n\big(X_{\kappa_n(r)}^{i,N, n}, \mu_{\kappa_n(r)}^{X,N,n}\big) dr +\sum_{l=1}^m \int_{\kappa_n(s)}^s \sigma^{(l)}\big(X_{\kappa_n(r)}^{i,N,n}, \mu_{\kappa_n(r)}^{X,N,n}\big) dW_r^{(l),i}
\\
& \quad+\sum_{l=1}^m \int_{\kappa_n(s)}^s \Lambda_1^{(l)}\big(r, X_{\kappa_n(r)}^{i,N,n}, \mu_{\kappa_n(r)}^{X,N,n}\big) dW_r^{(l),i} +\sum_{l=1}^m \int_{\kappa_n(s)}^s \Lambda_2^{(l)}\big(r, X_{\kappa_n(r)}^{i,N,n}, \mu_{\kappa_n(r)}^{X,N,n}\big) dW_r^{(l),i}
\end{align*}
which gives the following expression, 
\begin{align} \label{eq:sigma:2}
 \Big\langle & \partial_x \sigma^{(u,v)} (X_{\kappa_n(s)}^{i,N,n},\mu_{\kappa_n(s)}^{X,N,n}), X_{s}^{i,N,n}-X_{\kappa_n(s)}^{i,N,n} \Big\rangle \notag
 \\
 =& \Big\langle \partial_x \sigma^{(u,v)} (X_{\kappa_n(s)}^{i,N,n},\mu_{\kappa_n(s)}^{X,N,n}), \int_{\kappa_n(s)}^s b_n\big(X_{\kappa_n(r)}^{i,N, n}, \mu_{\kappa_n(r)}^{X,N,n}\big) dr\Big\rangle  + \Lambda_1^{(u, v)} \big(s,X_{\kappa_n(s)}^{i,N,n}, \mu_{\kappa_n(s)}^{X,N,n}\big) \notag
 \\
 & + \Big\langle \partial_x \sigma^{(u,v)} (X_{\kappa_n(s)}^{i,N,n},\mu_{\kappa_n(s)}^{X,N,n}),\sum_{l=1}^m \int_{\kappa_n(s)}^s \Lambda_1^{(l)}\big(r, X_{\kappa_n(r)}^{i,N,n}, \mu_{\kappa_n(r)}^{X,N,n}\big) dW_r^{(l),i}  \Big\rangle \notag
 \\
& + \Big\langle \partial_x \sigma^{(u,v)} (X_{\kappa_n(s)}^{i,N,n},\mu_{\kappa_n(s)}^{X,N,n}), \sum_{l=1}^m \int_{\kappa_n(s)}^s \Lambda_2^{(l)}\big(r, X_{\kappa_n(r)}^{i,N,n}, \mu_{\kappa_n(r)}^{X,N,n}\big) dW_r^{(l),i} \Big\rangle
\end{align}
and similarly, 
\begin{align} \label{eq:sigma:3}
\frac{1}{N} &\sum_{j=1}^N  \Big\langle \partial_\mu \sigma^{(u,v)} \big(X_{\kappa_n(s)}^{i,N,n},\mu_{\kappa_n(s)}^{X,N,n}, X_{\kappa_n(s)}^{j,N,n} \big), X_{s}^{j,N,n}-X_{\kappa_n(s)}^{j,N,n} \Big\rangle \notag
\\
 = & \frac{1}{N} \sum_{j=1}^N \Big\langle \partial_\mu \sigma^{(u,v)} \big(X_{\kappa_n(s)}^{i,N,n},\mu_{\kappa_n(s)}^{X,N,n}, X_{\kappa_n(s)}^{j,N,n} \big), \int_{\kappa_n(s)}^s b_n\big(X_{\kappa_n(r)}^{j,N, n}, \mu_{\kappa_n(r)}^{X,N,n}\big) dr  \Big\rangle \notag 
\\
& +  \Lambda_2^{(u, v)} \big(s,X_{\kappa_n(s)}^{i,N,n}, \mu_{\kappa_n(s)}^{X,N,n}\big) \notag 
\\
& +  \frac{1}{N} \sum_{j=1}^N \Big\langle \partial_\mu \sigma^{(u,v)} \big(X_{\kappa_n(s)}^{i,N,n},\mu_{\kappa_n(s)}^{X,N,n}, X_{\kappa_n(s)}^{j,N,n} \big), \sum_{l=1}^m \int_{\kappa_n(s)}^s \Lambda_1^{(l)}\big(r, X_{\kappa_n(r)}^{j,N,n}, \mu_{\kappa_n(r)}^{X,N,n}\big) dW_r^{(l),j} \Big\rangle \notag
\\
& + \frac{1}{N} \sum_{j=1}^N \Big\langle \partial_\mu \sigma^{(u,v)} \big(X_{\kappa_n(s)}^{i,N,n},\mu_{\kappa_n(s)}^{X,N,n}, X_{\kappa_n(s)}^{j,N,n} \big),\sum_{l=1}^m \int_{\kappa_n(s)}^s \Lambda_2^{(l)}\big(r, X_{\kappa_n(r)}^{j,N,n}, \mu_{\kappa_n(r)}^{X,N,n}\big) dW_r^{(l),j} \Big\rangle
\end{align}
almost surely for any $s\in[0,T]$ and $n,N\in\mathbb{N}$.  On substituting values from equations \eqref{eq:sigma:2} and \eqref{eq:sigma:3} in equation \eqref{eq:sigma:1} and then on using  Lemma \ref{lem:f:rate} and Cauchy-Schwarz inequality, one gets,  
\begin{align} 
& E| \sigma^{(u, v)}(X_s^{i,N,n},\mu_s^{X,N,n})-\tilde{\sigma}^{(u,v)}(s,X_{\kappa_n(s)}^{i,N,n},\mu_{\kappa_n(s)}^{X,N,n}) |^2 \notag
\\
 & \leq K E |X_s^{i,N,n}-X_{\kappa_n(s)}^{i,N,n}|^4  + K \frac{1}{N^2}\sum_{j, k=1}^N E|X_s^{j,N,n}-X_{\kappa_n(s)}^{j,N,n}|^2 |X_s^{k,N,n}-X_{\kappa_n(s)}^{k,N,n}|^2 \notag
 \\
 &  + K E \Big\{|\partial_x \sigma^{(u,v)} (X_{\kappa_n(s)}^{i,N,n},\mu_{\kappa_n(s)}^{X,N,n})| \Big|\int_{\kappa_n(s)}^s b_n\big(X_{\kappa_n(r)}^{i,N, n}, \mu_{\kappa_n(r)}^{X,N,n}\big) dr \Big|\Big\}^2    \notag
 \\
   &  +  K E \Big\{| \partial_x \sigma^{(u,v)} (X_{\kappa_n(s)}^{i,N,n},\mu_{\kappa_n(s)}^{X,N,n})| \sum_{l=1}^m \Big|\int_{\kappa_n(s)}^s \Lambda_1^{(l)}\big(r, X_{\kappa_n(r)}^{i,N,n}, \mu_{\kappa_n(r)}^{X,N,n}\big) dW_r^{(l),i}   \Big|\Big\}^2 \notag
 \\
  &  +K E \Big\{ |\partial_x \sigma^{(u,v)} (X_{\kappa_n(s)}^{i,N,n},\mu_{\kappa_n(s)}^{X,N,n})| \sum_{l=1}^m \Big| \int_{\kappa_n(s)}^s \Lambda_2^{(l)}\big(r, X_{\kappa_n(r)}^{i,N,n}, \mu_{\kappa_n(r)}^{X,N,n}\big) dW_r^{(l),i}  \Big|\Big\}^2  \notag 
\\
  &  + K E\Big\{\frac{1}{N} \sum_{j=1}^N \big|\partial_\mu \sigma^{(u,v)} \big(X_{\kappa_n(s)}^{i,N,n},\mu_{\kappa_n(s)}^{X,N,n}, X_{\kappa_n(s)}^{j,N,n} \big)\big| \Big|\int_{\kappa_n(s)}^s b_n\big(X_{\kappa_n(r)}^{j,N, n}, \mu_{\kappa_n(r)}^{X,N,n}\big) dr  \Big|\Big\}^2 \notag 
\\
  &  + KE\Big\{ \frac{1}{N} \sum_{j=1}^N \big| \partial_\mu \sigma^{(u,v)} \big(X_{\kappa_n(s)}^{i,N,n},\mu_{\kappa_n(s)}^{X,N,n}, X_{\kappa_n(s)}^{j,N,n} \big)\big| \sum_{l=1}^m \Big|\int_{\kappa_n(s)}^s \Lambda_1^{(l)}\big(r, X_{\kappa_n(r)}^{j,N,n}, \mu_{\kappa_n(r)}^{X,N,n}\big) dW_r^{(l),j} \Big|\Big\}^2 \notag
\\
 &  + K E\Big\{  \frac{1}{N} \sum_{j=1}^N \big|\partial_\mu \sigma^{(u,v)} \big(X_{\kappa_n(s)}^{i,N,n},\mu_{\kappa_n(s)}^{X,N,n}, X_{\kappa_n(s)}^{j,N,n} \big)\big| \sum_{l=1}^m \Big|\int_{\kappa_n(s)}^s \Lambda_2^{(l)}\big(r, X_{\kappa_n(r)}^{j,N,n}, \mu_{\kappa_n(r)}^{X,N,n}\big) dW_r^{(l),j} \Big|\Big\}^2 \notag
\end{align} 
for any $s\in[0,T]$ and $n,N\in\mathbb{N}$. Moreover, the application of Corollary \ref{cor:one:step}, Young's inequality, Remarks \ref{rem:b:growth} and \ref{rem:sigma:growth}  yields 
\begin{align} 
 E|  \sigma^{(u, v)}(X_s^{i,N,n},&\mu_s^{X,N,n}) -\tilde{\sigma}^{(u,v)}(s,X_{\kappa_n(s)}^{i,N,n},\mu_{\kappa_n(s)}^{X,N,n}) |^2  \notag
 \\
  \leq &  K n^{-2}  +  K  n^{-2} E  (1+|X_{\kappa_n(s)}^{i,N, n}|)^{\rho+4}    +  K  n^{-2} \frac{1}{N} \sum_{j=1}^N E  (1+|X_{\kappa_n(s)}^{j,N, n}|)^{\rho+4} \notag
\\
& +    K E \sum_{l=1}^m \int_{\kappa_n(s)}^s \big|\Lambda_1^{(l)}\big(r, X_{\kappa_n(r)}^{i,N,n}, \mu_{\kappa_n(r)}^{X,N,n}\big)\big|^2 dr    \notag
 \\
  & +K  E  \sum_{l=1}^m  \int_{\kappa_n(s)}^s | \Lambda_2^{(l)}\big(r, X_{\kappa_n(r)}^{i,N,n}, \mu_{\kappa_n(r)}^{X,N,n}\big)|^2 dr   \notag 
\\
 &  + K  E \frac{1}{N} \sum_{j=1}^N \int_{\kappa_n(s)}^s  \big| \Lambda_1^{(l)}\big(r, X_{\kappa_n(r)}^{j,N,n}, \mu_{\kappa_n(r)}^{X,N,n}\big)  \big|^2 dr  \notag
\\
 & +  K E \frac{1}{N}  \sum_{j=1}^N  \int_{\kappa_n(s)}^s  \big| \Lambda_2^{(l)}\big(r, X_{\kappa_n(r)}^{j,N,n}, \mu_{\kappa_n(r)}^{X,N,n}\big)  \big|^2 dr \notag
\end{align} 
for any $s\in[0,T]$ and $n,N\in\mathbb{N}$. The proof is completed by the application of Lemma \ref{lem:moment:bound}, Corollaries \ref{cor:lambda_1} and \ref{cor:lambda2}.
\end{proof}
\begin{lem} \label{lem:b-b}
Let Assumptions \ref{as:initial} to \ref{as:sig:der} be satisfied. Then,  for each $i\in\{1,\ldots,N\}$, 
\begin{align*}
E|b(X_s^{i,N,n},\mu_s^{X,N,n})-b_n(X_{\kappa_n(s)}^{i,N,n},\mu_{\kappa_n(s)}^{X,N,n})|^2 \leq K n^{-1}
\end{align*}
for any $s\in[0,T]$ and $n,N\in\mathbb{N}$ where the constant $K>0$ does not depend on $n,N$.  
\end{lem}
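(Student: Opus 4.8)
The plan is to split the quantity into a discretization error for the drift $b$ evaluated along the scheme and a taming error comparing $b$ with $b_n$, and to estimate each separately. I would write
\[
b(X_s^{i,N,n},\mu_s^{X,N,n})-b_n(X_{\kappa_n(s)}^{i,N,n},\mu_{\kappa_n(s)}^{X,N,n})=A_1+A_2,
\]
where $A_1:=b(X_s^{i,N,n},\mu_s^{X,N,n})-b(X_{\kappa_n(s)}^{i,N,n},\mu_{\kappa_n(s)}^{X,N,n})$ and $A_2:=b(X_{\kappa_n(s)}^{i,N,n},\mu_{\kappa_n(s)}^{X,N,n})-b_n(X_{\kappa_n(s)}^{i,N,n},\mu_{\kappa_n(s)}^{X,N,n})$, so that it suffices to bound $E|A_1|^2$ and $E|A_2|^2$ separately by $Kn^{-1}$.

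For $A_1$, the idea is to apply the polynomial Lipschitz estimate of Assumption \ref{as:b:lip}, which gives $|A_1|\leq L\{(1+|X_s^{i,N,n}|+|X_{\kappa_n(s)}^{i,N,n}|)^{\rho/2+1}|X_s^{i,N,n}-X_{\kappa_n(s)}^{i,N,n}|+\mathcal{W}_2(\mu_s^{X,N,n},\mu_{\kappa_n(s)}^{X,N,n})\}$. For the spatial contribution I would separate the polynomial weight from the one-step increment via the Cauchy--Schwarz inequality, writing $E[(1+\cdots)^{\rho/2+1}|X_s^{i,N,n}-X_{\kappa_n(s)}^{i,N,n}|]^2\leq (E(1+\cdots)^{2\rho+4})^{1/2}(E|X_s^{i,N,n}-X_{\kappa_n(s)}^{i,N,n}|^4)^{1/2}$. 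The first factor is finite by the moment bound of Lemma \ref{lem:moment:bound} provided $p$ in Assumption \ref{as:initial} is taken sufficiently large, and the second factor is controlled by Corollary \ref{cor:one:step} with $p=4$, which yields $(E|X_s^{i,N,n}-X_{\kappa_n(s)}^{i,N,n}|^4)^{1/2}\leq Kn^{-1}$. For the measure contribution, I would use the empirical-measure coupling already exploited at the end of Lemma \ref{lem:f:rate}, namely $\mathcal{W}_2(\mu_s^{X,N,n},\mu_{\kappa_n(s)}^{X,N,n})^2\leq\frac{1}{N}\sum_{j=1}^N|X_s^{j,N,n}-X_{\kappa_n(s)}^{j,N,n}|^2$, and then apply Corollary \ref{cor:one:step} with $p=2$ to each summand, giving $E\,\mathcal{W}_2(\mu_s^{X,N,n},\mu_{\kappa_n(s)}^{X,N,n})^2\leq Kn^{-1}$. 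Hence $E|A_1|^2\leq Kn^{-1}$.

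For $A_2$, I would exploit the explicit form of the tamed drift in \eqref{eq:bn}: since $b-b_n=b\,\dfrac{n^{-1}|x|^{\rho+2}}{1+n^{-1}|x|^{\rho+2}}$, one has $|A_2|\leq n^{-1}|X_{\kappa_n(s)}^{i,N,n}|^{\rho+2}\,|b(X_{\kappa_n(s)}^{i,N,n},\mu_{\kappa_n(s)}^{X,N,n})|$, and the growth bound $|b(x,\mu)|\leq K(1+|x|)^{\rho/2+2}$ from Remark \ref{rem:b:growth} then yields $|A_2|\leq Kn^{-1}(1+|X_{\kappa_n(s)}^{i,N,n}|)^{3\rho/2+4}$. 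Squaring and invoking Lemma \ref{lem:moment:bound} (again with $p$ large enough that $E(1+|X_{\kappa_n(s)}^{i,N,n}|)^{3\rho+8}$ is finite) gives $E|A_2|^2\leq Kn^{-2}$, which is of higher order than required. Combining the two estimates through $E|b(\cdots)-b_n(\cdots)|^2\leq 2E|A_1|^2+2E|A_2|^2$ then completes the proof.

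The main obstacle is that the rate is genuinely limited to $n^{-1}$ rather than $n^{-2}$, and the culprit is the term $A_1$: because the one-step increment $X_s^{i,N,n}-X_{\kappa_n(s)}^{i,N,n}$ carries a diffusion contribution of order $n^{-1/2}$, no cancellation is available for the drift difference and its squared $L^2$-norm cannot beat $n^{-1}$. This is acceptable since the drift enters the global error through a further time integration. The technical delicacy is the super-linear growth: the polynomial weights $(1+|x|)^{\rho/2+1}$ and the taming factor $|x|^{\rho+2}$ must be absorbed, which works precisely because Corollary \ref{cor:one:step} supplies one-step estimates of arbitrary order in $p$ and Lemma \ref{lem:moment:bound} supplies matching moment bounds, so that all polynomial weights are finite once $p$ in Assumption \ref{as:initial} is fixed large enough.
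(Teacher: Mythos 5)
Your proposal is correct and follows essentially the same route as the paper: the same decomposition into the discretization error $b(X_s^{i,N,n},\mu_s^{X,N,n})-b(X_{\kappa_n(s)}^{i,N,n},\mu_{\kappa_n(s)}^{X,N,n})$ plus the taming error, the polynomial Lipschitz bound of Assumption \ref{as:b:lip}, H\"older/Cauchy--Schwarz to separate the polynomial weight from the one-step increment, and Corollary \ref{cor:one:step} together with Lemma \ref{lem:moment:bound} to conclude. The only difference is that you spell out the taming term $A_2$ explicitly (the paper leaves its $O(n^{-2})$ bound implicit), which is a welcome addition rather than a deviation.
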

\begin{proof}
 By Assumption \ref{as:b:lip},  
\begin{align*}
E|b(& X_s^{i,N,n}, \mu_s^{X,N,n})-b_n(X_{\kappa_n(s)}^{i,N,n},\mu_{\kappa_n(s)}^{X,N,n})|^2 \leq K E|b(X_s^{i,N,n},\mu_s^{X,N,n})-b(X_{\kappa_n(s)}^{i,N,n},\mu_{\kappa_n(s)}^{X,N,n})|^2
\\
&+KE|b(X_{\kappa_n(s)}^{i,N,n},\mu_{\kappa_n(s)}^{X,N,n})-b_n(X_{\kappa_n(s)}^{i,N,n},\mu_{\kappa_n(s)}^{X,N,n})|^2
\\
\leq & K E\big(1+|X_s^{i,N,n}|+|X_{\kappa_n(s)}^{i,N,n}| \big)^{\rho+2} |X_s^{i,N,n}-X_{\kappa_n(s)}^{i,N,n}|^2+ K E\mathcal{W}_2\big(\mu_s^{X,N,n},\mu_{\kappa_n(s)}^{X,N,n}\big)^2
\end{align*} 
which on using H\"older's inequality and Corollary \ref{cor:one:step} completes the proof.
\end{proof}
\begin{lem} \label{lem:b-b:x}
Let Assumptions \ref{as:initial} to \ref{as:sig:der} be satisfied. Then, for each $i\in\{1,\ldots,N\}$, 
\begin{align*} 
E \langle X_s^{i,N}  - X_{s}^{i,N,n}, \, & b(X_s^{i,N,n},\mu_s^{X,N,n})-b(X_{\kappa_n(s)}^{i,N,n},\mu_{\kappa_n(s)}^{X,N,n}) \rangle  \leq K n^{-2} 
\\
& + K \sup_{i\in\{1,\cdots,N\}}\sup_{r\in[0,s]} E|X_r^{i,N}-X_r^{i,N,n}|^2 
\end{align*}
for any $s\in[0,T]$ and $n,N\in\mathbb{N}$ where the constant $K>0$ does not depend on $n,N$.  
\end{lem}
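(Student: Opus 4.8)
The plan is to bound the inner product by isolating the part of the error $X_s^{i,N}-X_s^{i,N,n}$ that is measurable at the grid point $\kappa_n(s)$ and exploiting the martingale structure of the remaining one-step increments. First I would write
\[
X_s^{i,N}-X_s^{i,N,n}=\big(X_{\kappa_n(s)}^{i,N}-X_{\kappa_n(s)}^{i,N,n}\big)+\big(X_s^{i,N}-X_{\kappa_n(s)}^{i,N}\big)-\big(X_s^{i,N,n}-X_{\kappa_n(s)}^{i,N,n}\big)
\]
and split the expectation into a term $A_1$ carrying the first (grid) bracket and a term $A_2$ carrying the increment difference. Throughout I abbreviate the second factor as $b\text{-}\mathrm{diff}:=b(X_s^{i,N,n},\mu_s^{X,N,n})-b(X_{\kappa_n(s)}^{i,N,n},\mu_{\kappa_n(s)}^{X,N,n})$ and set $\|e\|^2:=\sup_{j\in\{1,\ldots,N\}}\sup_{r\in[0,s]}E|X_r^{j,N}-X_r^{j,N,n}|^2$, which is exactly the quantity on the right-hand side of the lemma. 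Assumption \ref{as:b:lip}, Corollary \ref{cor:one:step} and H\"older's inequality give $E|b\text{-}\mathrm{diff}|^2\le Kn^{-1}$, as in Lemma \ref{lem:b-b}.

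For $A_1$, since $X_{\kappa_n(s)}^{i,N}-X_{\kappa_n(s)}^{i,N,n}$ is $\mathcal{F}_{\kappa_n(s)}$-measurable, I would condition on $\mathcal{F}_{\kappa_n(s)}$ and reduce to estimating $E[b\text{-}\mathrm{diff}\mid\mathcal{F}_{\kappa_n(s)}]$. Expanding $b\text{-}\mathrm{diff}$ by Corollary \ref{cor:MVT} produces the first-order term $\langle\partial_x b(\xi^i),X_s^{i,N,n}-X_{\kappa_n(s)}^{i,N,n}\rangle$ and the corresponding $\partial_\mu b$ terms, with $\xi^i$ the mean-value point. Substituting the scheme increment $X_s^{i,N,n}-X_{\kappa_n(s)}^{i,N,n}=\int_{\kappa_n(s)}^s b_n\,dr+\sum_l\int_{\kappa_n(s)}^s\tilde\sigma^{(l)}\,dW_r^{(l),i}$, the drift piece is $O(n^{-1})$ directly; for the stochastic-integral piece I would replace $\partial_x b(\xi^i)$ by the $\mathcal{F}_{\kappa_n(s)}$-measurable $\partial_x b(X_{\kappa_n(s)}^{i,N,n})$, so that the replaced term has vanishing conditional expectation because $\int_{\kappa_n(s)}^s\tilde\sigma^{(l)}\,dW_r^{(l),i}$ is a martingale increment, while the replacement error is controlled by the polynomial-Lipschitz bound of Assumption \ref{as:b:der}, the one-step estimate (Corollary \ref{cor:one:step}) and Lemma \ref{lem:moment:bound}, yielding $O(n^{-1})$ after H\"older's inequality. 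The $\partial_\mu b$ terms are handled identically, using that each $W^{\cdot,j}$ is a martingale. Hence $\|E[b\text{-}\mathrm{diff}\mid\mathcal{F}_{\kappa_n(s)}]\|_{L^2}\le Kn^{-1}$, and Cauchy-Schwarz followed by Young's inequality bounds $A_1$ by $Kn^{-2}+\tfrac12E|X_{\kappa_n(s)}^{i,N}-X_{\kappa_n(s)}^{i,N,n}|^2$, of the required form.

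For $A_2=E\langle(X_s^{i,N}-X_{\kappa_n(s)}^{i,N})-(X_s^{i,N,n}-X_{\kappa_n(s)}^{i,N,n}),\,b\text{-}\mathrm{diff}\rangle$ I would apply Cauchy-Schwarz, so the task reduces to showing the $L^2$-norm of the increment difference is $O(n^{-3/2}+n^{-1/2}\|e\|)$; together with $\|b\text{-}\mathrm{diff}\|_{L^2}=O(n^{-1/2})$ and Young's inequality this gives $A_2\le Kn^{-2}+K\|e\|^2$. Writing the increment difference through the two defining equations, its drift part $\int_{\kappa_n(s)}^s\{b(X_r^{i,N},\mu_r^{X,N})-b_n(X_{\kappa_n(s)}^{i,N,n},\mu_{\kappa_n(s)}^{X,N,n})\}\,dr$ has integrand of order $O(n^{-1/2}+\|e\|)$ in $L^2$, on splitting it through $b(X_r^{i,N,n})$ and the taming correction and using the polynomial-Lipschitz bound, Corollaries \ref{cor:one:step:true}, \ref{cor:one:step} and Remark \ref{rem:bn:growth}; hence the drift part is $O(n^{-3/2}+n^{-1}\|e\|)$. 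The diffusion part $\sum_l\int_{\kappa_n(s)}^s\{\sigma^{(l)}(X_r^{i,N},\mu_r^{X,N})-\tilde\sigma^{(l)}(r,X_{\kappa_n(s)}^{i,N,n},\mu_{\kappa_n(s)}^{X,N,n})\}\,dW_r^{(l),i}$ is the decisive point: by the It\^o isometry and decomposing the integrand through $\sigma^{(l)}(X_r^{i,N,n},\mu_r^{X,N,n})$, the first piece is $O(\|e\|)$ by the Lipschitz Assumption \ref{as:sig:lip} and the second is exactly $O(n^{-1})$ by Lemma \ref{lem:sigma:rate}, so the diffusion part is $O(n^{-1/2}\|e\|+n^{-3/2})$ in $L^2$.

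The main obstacle is precisely this increment term $A_2$: a direct Cauchy-Schwarz on $X_s^{i,N}-X_s^{i,N,n}$ against $b\text{-}\mathrm{diff}$ would give only $O(n^{-1})$ after Young, which is insufficient. The gain comes only from working with the \emph{difference} of the two one-step increments, so that the leading $O(n^{-1/2})$ diffusion contributions cancel up to the $O(n^{-1})$ Milstein residual supplied by Lemma \ref{lem:sigma:rate}; this is exactly where the correction $\Lambda_1+\Lambda_2$ inside $\tilde\sigma$ is essential, lowering the diffusion mismatch from order $n^{-1/2}$ to order $n^{-1}$ and thereby upgrading the final bound from $n^{-1}$ to $n^{-2}$. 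Some care is also needed to verify that no bare $O(n^{-3/2})$ term survives, which holds because every such contribution is multiplied either by a further $n^{-1/2}$ (producing $n^{-2}$) or by $\|e\|$ (producing $n^{-3/2}\|e\|$, absorbable by Young's inequality).
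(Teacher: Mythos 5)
Your overall architecture is sound and close in spirit to the paper's: expanding $b(X_s^{i,N,n},\mu_s^{X,N,n})-b(X_{\kappa_n(s)}^{i,N,n},\mu_{\kappa_n(s)}^{X,N,n})$ to first order with a quadratic remainder, killing the leading martingale contribution against the $\mathcal{F}_{\kappa_n(s)}$-measurable part of the error, and invoking Lemma \ref{lem:sigma:rate} to reduce the diffusion mismatch to order $n^{-1}$ are exactly the three ingredients the paper uses (its $T_1$, the vanishing second terms in its $T_2$ and $T_3$, and the terms $T_{26}$, $T_{36}$). Your $A_1$ estimate via conditioning on $\mathcal{F}_{\kappa_n(s)}$ is a clean repackaging of that part and works.

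The gap is in the drift part of $A_2$. After plain Cauchy--Schwarz you need the integrand $b(X_r^{i,N},\mu_r^{X,N})-b_n(X_{\kappa_n(r)}^{i,N,n},\mu_{\kappa_n(r)}^{X,N,n})$ to be $O(n^{-1/2}+\|e\|)$ in $L^2$, and you attribute the $\|e\|$ contribution of the piece $b(X_r^{i,N},\mu_r^{X,N})-b(X_r^{i,N,n},\mu_r^{X,N})$ to ``the polynomial-Lipschitz bound''. But Assumption \ref{as:b:lip} only gives $|b(x,\mu)-b(\bar x,\mu)|\le L(1+|x|+|\bar x|)^{\rho/2+1}|x-\bar x|$, so $E|b(X_r^{i,N},\mu_r^{X,N})-b(X_r^{i,N,n},\mu_r^{X,N})|^2$ is controlled by $E[(1+|X_r^{i,N}|+|X_r^{i,N,n}|)^{\rho+2}|X_r^{i,N}-X_r^{i,N,n}|^2]$, which cannot be bounded by $K\|e\|^2$: separating the unbounded weight from the error by H\"older requires a moment of $|X_r^{i,N}-X_r^{i,N,n}|$ strictly higher than the second, and no bound of the right size on such a moment is available at this stage. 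With only the crude bound $E|b(X_r^{i,N},\mu_r^{X,N})-b(X_r^{i,N,n},\mu_r^{X,N})|^2\le K$, the drift part of the increment difference is merely $O(n^{-1})$ in $L^2$, and its product with $\|b\text{-}\mathrm{diff}\|_{L^2}=O(n^{-1/2})$ gives $O(n^{-3/2})$, which destroys the rate. The paper avoids this (see the estimates of $T_{21}$ and $T_{31}$) by never taking the $L^2$ norm of this difference on its own: it writes the relevant product as $n^{1/2}|X_r^{i,N}-X_r^{i,N,n}|$ times $n^{-1/2}(1+|X_r^{i,N}|+|X_r^{i,N,n}|)^{\rho/2+1}$ times the other factor, and applies Young's inequality, so that the error appears only squared (yielding $K\|e\|^2$ after integrating over an interval of length $n^{-1}$) while the polynomial weight lands on quantities with all moments bounded and of size $O(n^{-1})$ or better. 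Your argument goes through once you replace the Cauchy--Schwarz step for this one term by that weighted Young split; the measure part and the taming part of the drift, and the whole diffusion part of $A_2$, are fine as you describe them.
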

\begin{proof}
Note that, 
\begin{align} 
& E \Big\langle  X_s^{i,N}  - X_{s}^{i,N,n}, b\Big(  X_s^{i,N,n},\mu_{s}^{X,N,n}\Big)  -b\Big(X_{\kappa_n(s)}^{i,N,n},\mu_{\kappa_n(s)}^{X,N,n}\Big) \Big\rangle \notag
\\
 & =  E \sum_{k=1}^d  \big(X_s^{(k),i,N}  - X_{s}^{(k), i,N,n}\big)\Big\{  b^{(k)}\big(  X_s^{i,N,n},\mu_{s}^{X,N,n}\big)  -b^{(k)}\big(X_{\kappa_n(s)}^{i,N,n},\mu_{\kappa_n(s)}^{X,N,n}\big) \notag
\\
& \quad- \Big\langle \partial_x b^{(k)}\Big(X_{\kappa_n(s)}^{i,N,n},\mu_{\kappa_n(s)}^{X,N,n}\Big), X_{s}^{i,N,n}- X_{\kappa_n(s)}^{i,N,n} \Big \rangle   \notag
\\
& \quad -  \frac{1}{N} \sum_{j=1}^N \Big\langle \partial_{\mu}b^{(k)}(X_{\kappa_n(s)}^{i,N,n}, \mu_{\kappa_n(s)}^{X,N,n}, X_{\kappa_n(s)}^{j,N,n}), X_{s}^{j,N,n}- X_{\kappa_n(s)}^{j,N,n} \Big \rangle\Big\} \notag
\\
& \quad + E\sum_{k=1}^d \big(X_s^{(k),i,N}  - X_{s}^{(k), i,N,n}\big) \Big\langle \partial_x b^{(k)}\Big(X_{\kappa_n(s)}^{i,N,n},\mu_{\kappa_n(s)}^{X,N,n}\Big), X_{s}^{i,N,n}- X_{\kappa_n(s)}^{i,N,n} \Big \rangle  \notag
\\
&\quad +E\sum_{k=1}^d \big(X_s^{(k),i,N}  - X_{s}^{(k), i,N,n}\big) \frac{1}{N} \sum_{j=1}^N \Big\langle \partial_{\mu}b^{(k)}(X_{\kappa_n(s)}^{i,N,n}, \mu_{\kappa_n(s)}^{X,N,n}, X_{\kappa_n(s)}^{j,N,n}), X_{s}^{j,N,n}- X_{\kappa_n(s)}^{j,N,n} \Big \rangle \notag
\\
&=: T_1+T_2+T_3 \label{eq:T}
\end{align}
for any $s\in[0,T]$ and $n, N\in \mathbb{N}$. By using Young's inequality, $T_1$ is estimated as, 
\begin{align*}
T_1&:= E \sum_{k=1}^d  \big(X_s^{(k),i,N}  - X_{s}^{(k), i,N,n}\big)\Big\{  b^{(k)}\big(  X_s^{i,N,n},\mu_{s}^{X,N,n}\big)  -b^{(k)}\big(X_{\kappa_n(s)}^{i,N,n},\mu_{\kappa_n(s)}^{X,N,n}\big) \notag
\\
& \quad- \Big\langle \partial_x b^{(k)}\Big(X_{\kappa_n(s)}^{i,N,n},\mu_{\kappa_n(s)}^{X,N,n}\Big), X_{s}^{i,N,n}- X_{\kappa_n(s)}^{i,N,n} \Big \rangle   \notag
\\
& \quad -  \frac{1}{N} \sum_{j=1}^N \Big\langle \partial_{\mu}b^{(k)}(X_{\kappa_n(s)}^{i,N,n}, \mu_{\kappa_n(s)}^{X,N,n}, X_{\kappa_n(s)}^{j,N,n}), X_{s}^{j,N,n}- X_{\kappa_n(s)}^{j,N,n} \Big \rangle\Big\} \notag
\\
& \leq K E \sum_{k=1}^d  \big|X_s^{(k),i,N}  - X_{s}^{(k), i,N,n}\big|^2   + K E \sum_{k=1}^d  \Big|  b^{(k)}\big(  X_s^{i,N,n},\mu_{s}^{X,N,n}\big)  -b^{(k)}\big(X_{\kappa_n(s)}^{i,N,n},\mu_{\kappa_n(s)}^{X,N,n}\big) \notag
\\
& \quad- \Big\langle \partial_x b^{(k)}\Big(X_{\kappa_n(s)}^{i,N,n},\mu_{\kappa_n(s)}^{X,N,n}\Big), X_{s}^{i,N,n}- X_{\kappa_n(s)}^{i,N,n} \Big \rangle   \notag
\\
& \quad -  \frac{1}{N} \sum_{j=1}^N \Big\langle \partial_{\mu}b^{(k)}(X_{\kappa_n(s)}^{i,N,n}, \mu_{\kappa_n(s)}^{X,N,n}, X_{\kappa_n(s)}^{j,N,n}), X_{s}^{j,N,n}- X_{\kappa_n(s)}^{j,N,n} \Big \rangle\Big|^2 \notag
\end{align*}
which on using Lemma \ref{lem:f:rate:local}, Corollary \ref{cor:one:step}, Lemma \ref{lem:moment:bound} and H\"older's inequality yields,  
\begin{align}
 T_1  \leq &  K  E  \big|X_s^{i,N}  - X_{s}^{i,N,n}\big|^2   + K E(1+|X_s^{i,N,n}|+|X_{\kappa_n(s)}^{i,N,n}|)^\rho |X_{s}^{i,N,n}- X_{\kappa_n(s)}^{i,N,n}|^4 \notag
 \\
 &+ K \frac{1}{N}\sum_{j=1}^N E|X_{s}^{j,N,n}- X_{\kappa_n(s)}^{j,N,n}|^4 \notag
 \\
 \leq & K \sup_{i\in \{1,\ldots,N\}} \sup_{r\in [0,s]}E  \big|X_r^{i,N}  - X_{r}^{i,N,n}\big|^2 + K n^{-2} \label{eq:T1}
\end{align}
for any $s\in[0,T]$ and $n,N\in\mathbb{N}$.  Further, using equation \eqref{eq:scheme},
\begin{align*}
 T_2  &:=  E\sum_{k=1}^d \big(X_s^{(k),i,N}  - X_{s}^{(k), i,N,n}\big) \Big\langle \partial_x b^{(k)}\Big(X_{\kappa_n(s)}^{i,N,n},\mu_{\kappa_n(s)}^{X,N,n}\Big), X_{s}^{i,N,n}- X_{\kappa_n(s)}^{i,N,n} \Big \rangle  \notag
 \\
 =& E\sum_{k=1}^d \big(X_{s}^{(k),i,N}  - X_{s}^{(k), i,N,n}\big) \Big\langle \partial_x b^{(k)}\Big(X_{\kappa_n(s)}^{i,N,n},\mu_{\kappa_n(s)}^{X,N,n}\Big), \int_{\kappa_n(s)}^s b_n\Big(X_{\kappa_n(r)}^{i,N,n},\mu_{\kappa_n(r)}^{X,N,n}\Big)dr \Big \rangle  \notag
\\
&+E\sum_{k=1}^d \big(X_{s}^{(k),i,N}  - X_{s}^{(k), i,N,n}\big) \Big\langle \partial_x b^{(k)}\Big(X_{\kappa_n(s)}^{i,N,n},\mu_{\kappa_n(s)}^{X,N,n}\Big), \int_{\kappa_n(s)}^s \tilde{\sigma}\Big(r,X_{\kappa_n(r)}^{i,N,n},\mu_{\kappa_n(r)}^{X,N,n}\Big)dW_r^i \Big \rangle  \notag
\\
   = & E\sum_{k=1}^d \big(X_{s}^{(k),i,N}  - X_{s}^{(k), i,N,n}\big) \Big\langle \partial_x b^{(k)}\Big(X_{\kappa_n(s)}^{i,N,n},\mu_{\kappa_n(s)}^{X,N,n}\Big), \int_{\kappa_n(s)}^s b_n\Big(X_{\kappa_n(r)}^{i,N,n},\mu_{\kappa_n(r)}^{X,N,n}\Big)dr \Big \rangle  \notag
\\
&+E\sum_{k=1}^d \big(X_{\kappa_n(s)}^{(k),i,N}  - X_{\kappa_n(s)}^{(k), i,N,n}\big) \Big\langle \partial_x b^{(k)}\Big(X_{\kappa_n(s)}^{i,N,n},\mu_{\kappa_n(s)}^{X,N,n}\Big), \int_{\kappa_n(s)}^s \tilde{\sigma}\Big(r,X_{\kappa_n(r)}^{i,N,n},\mu_{\kappa_n(r)}^{X,N,n}\Big)dW_r^i \Big \rangle  \notag
\\
&+E\sum_{k=1}^d \big(X_{s}^{(k),i,N} - X_{\kappa_n(s)}^{(k),i,N} - X_{s}^{(k), i,N,n}  + X_{\kappa_n(s)}^{(k), i,N,n} \big)
\\
& \qquad \times \Big\langle \partial_x b^{(k)}\Big(X_{\kappa_n(s)}^{i,N,n},\mu_{\kappa_n(s)}^{X,N,n}\Big), \int_{\kappa_n(s)}^s \tilde{\sigma}\Big(r,X_{\kappa_n(r)}^{i,N,n},\mu_{\kappa_n(r)}^{X,N,n}\Big)dW_r^i \Big \rangle  \notag
\end{align*}
for any $s\in[0,T]$ and $n,N\in\mathbb{N}$. Notice that the second term on the right hand side of the above expression  is zero. Thus, from Young's inequality, Remark \ref{rem:bn:growth} and equations \eqref{eq:interactint:particle} and \eqref{eq:scheme}, one obtains 
\begin{align*}
 T_2  \leq &  K E \big|X_{s}^{i,N}  - X_{s}^{i,N,n}\big|^2 +K n^{-2} E  \big(1+ \big|X_{\kappa_n(s)}^{i,N,n}\big|\big)^{2\rho+6}    \notag
\\
& +  E\sum_{k=1}^d \int_{\kappa_n(s)}^s \big\{b^{(k)}(X_r^{i,N},\mu_r^{X,N})-b_n^{(k)}(X_{\kappa_n(r)}^{i,N, n},\mu_{\kappa_n(r)}^{X,N,n})\big\}dr
\\
& \qquad \times \Big\langle \partial_x b^{(k)}\Big(X_{\kappa_n(s)}^{i,N,n},\mu_{\kappa_n(s)}^{X,N,n}\Big), \int_{\kappa_n(s)}^s \tilde{\sigma}\Big(r,X_{\kappa_n(r)}^{i,N,n},\mu_{\kappa_n(r)}^{X,N,n}\Big)dW_r^i \Big \rangle  \notag
\\
&+  E\sum_{k=1}^d \sum_{l=1}^m \int_{\kappa_n(s)}^s \big\{\sigma^{(k,l)}(X_r^{i,N},\mu_r^{X,N})-\tilde{\sigma}^{(k,l)}(r, X_{\kappa_n(r)}^{i,N, n},\mu_{\kappa_n(r)}^{X,N,n})\big\}dW_r^{(l),i}
\\
& \qquad \times \Big\langle \partial_x b^{(k)}\Big(X_{\kappa_n(s)}^{i,N,n},\mu_{\kappa_n(s)}^{X,N,n}\Big), \int_{\kappa_n(s)}^s \tilde{\sigma}\Big(r,X_{\kappa_n(r)}^{i,N,n},\mu_{\kappa_n(r)}^{X,N,n}\Big)dW_r^i \Big \rangle  \notag
\end{align*}
for any $s\in[0,T]$ and $n,N\in\mathbb{N}$.  By using Lemma \ref{lem:moment:bound}, one can write
\begin{align}
T_2  \leq & K E \big|X_{s}^{i,N}  - X_{s}^{i,N,n}\big|^2 +K n^{-2}  \notag
\\
&+ E\sum_{k=1}^d \int_{\kappa_n(s)}^s \big\{b^{(k)}(X_r^{i,N},\mu_r^{X,N})- b^{(k)}(X_r^{i,N,n},\mu_r^{X,N})\big\} dr  \notag
\\
& \qquad \times \Big\langle \partial_x b^{(k)}\Big(X_{\kappa_n(s)}^{i,N,n},\mu_{\kappa_n(s)}^{X,N,n}\Big), \int_{\kappa_n(s)}^s \tilde{\sigma}\Big(r,X_{\kappa_n(r)}^{i,N,n},\mu_{\kappa_n(r)}^{X,N,n}\Big)dW_r^i \Big \rangle  \notag
\\
 + & E\sum_{k=1}^d \int_{\kappa_n(s)}^s \big\{b^{(k)}(X_r^{i,N,n},\mu_r^{X,N})-b^{(k)}(X_r^{i,N,n},\mu_r^{X,N,n})\big\} dr  \notag
\\
& \qquad \times \Big\langle \partial_x b^{(k)}\Big(X_{\kappa_n(s)}^{i,N,n},\mu_{\kappa_n(s)}^{X,N,n}\Big), \int_{\kappa_n(s)}^s \tilde{\sigma}\Big(r,X_{\kappa_n(r)}^{i,N,n},\mu_{\kappa_n(r)}^{X,N,n}\Big)dW_r^i \Big \rangle  \notag
\\
 + & E\sum_{k=1}^d \int_{\kappa_n(s)}^s \big\{b^{(k)}(X_r^{i,N,n},\mu_r^{X,N,n}) -b_n^{(k)}(X_{\kappa_n(r)}^{i,N, n},\mu_{\kappa_n(r)}^{X,N,n})\big\} dr  \notag
\\
& \qquad \times \Big\langle \partial_x b^{(k)}\Big(X_{\kappa_n(s)}^{i,N,n},\mu_{\kappa_n(s)}^{X,N,n}\Big), \int_{\kappa_n(s)}^s \tilde{\sigma}\Big(r,X_{\kappa_n(r)}^{i,N,n},\mu_{\kappa_n(r)}^{X,N,n}\Big)dW_r^i \Big \rangle  \notag
\\
+& E\sum_{k=1}^d \sum_{l=1}^m \int_{\kappa_n(s)}^s \big\{\sigma^{(k,l)}(X_r^{i,N},\mu_r^{X,N})-\sigma^{(k,l)}(X_r^{i,N,n},\mu_r^{X,N}) \big\}dW_r^{(l),i}  \notag
\\
& \qquad \times \Big\langle \partial_x b^{(k)}\Big(X_{\kappa_n(s)}^{i,N,n},\mu_{\kappa_n(s)}^{X,N,n}\Big), \int_{\kappa_n(s)}^s \tilde{\sigma}\Big(r,X_{\kappa_n(r)}^{i,N,n},\mu_{\kappa_n(r)}^{X,N,n}\Big)dW_r^i \Big \rangle  \notag
\\
+& E\sum_{k=1}^d  \sum_{l=1}^m \int_{\kappa_n(s)}^s \big\{\sigma^{(k,l)}(X_r^{i,N,n},\mu_r^{X,N})-\sigma^{(k,l)}(X_r^{i,N,n},\mu_r^{X,N,n}) \big\} dW_r^{(l),i}  \notag
\\
& \qquad \times \Big\langle \partial_x b^{(k)}\Big(X_{\kappa_n(s)}^{i,N,n},\mu_{\kappa_n(s)}^{X,N,n}\Big), \int_{\kappa_n(s)}^s \tilde{\sigma}\Big(r,X_{\kappa_n(r)}^{i,N,n},\mu_{\kappa_n(r)}^{X,N,n}\Big)dW_r^i \Big \rangle  \notag
\\
+&E\sum_{k=1}^d \sum_{l=1}^m \int_{\kappa_n(s)}^s \big\{\sigma^{(k,l)}(X_r^{i,N,n},\mu_r^{X,N, n})-\tilde{\sigma}^{(k,l)}(r, X_{\kappa_n(r)}^{i,N, n},\mu_{\kappa_n(r)}^{X,N,n})\big\} dW_r^{(l),i}  \notag
\\
& \qquad \times \Big\langle \partial_x b^{(k)}\Big(X_{\kappa_n(s)}^{i,N,n},\mu_{\kappa_n(s)}^{X,N,n}\Big), \int_{\kappa_n(s)}^s \tilde{\sigma}\Big(r,X_{\kappa_n(r)}^{i,N,n},\mu_{\kappa_n(r)}^{X,N,n}\Big)dW_r^i \Big \rangle  \notag
\\
=:& \sup_{i\in\{1,\cdots,N\}}\sup_{r\in[0,s]}E \big|X_{s}^{i,N}  - X_{s}^{i,N,n}\big|^2 +K n^{-2} \notag
\\
&  \qquad +T_{21}+T_{22}+T_{23}+T_{24}+T_{25}+T_{26} \label{eq:T2}
\end{align}
for any $s\in[0,T]$ and $n,N\in\mathbb{N}$. 

Using Cauchy-Schwarz inequality, Young's inequality, Assumption~\ref{as:b:lip} and Remark~\ref{rem:b:growth}, $T_{21}$ can be estimated by,
\begin{align*}
T_{21}&:=  E\sum_{k=1}^d \int_{\kappa_n(s)}^s \big\{b^{(k)}(X_r^{i,N},\mu_r^{X,N})- b^{(k)}(X_r^{i,N,n},\mu_r^{X,N})\big\} dr
\\
& \qquad \times \Big\langle \partial_x b^{(k)}\big(X_{\kappa_n(s)}^{i,N,n},\mu_{\kappa_n(s)}^{X,N,n}\big), \int_{\kappa_n(s)}^s \tilde{\sigma}\big(r,X_{\kappa_n(r)}^{i,N,n},\mu_{\kappa_n(r)}^{X,N,n}\big)dW_r^i \Big \rangle  \notag
\\
& \leq  E\sum_{k=1}^d \int_{\kappa_n(s)}^s \big|b^{(k)}(X_r^{i,N},\mu_r^{X,N})- b^{(k)}(X_r^{i,N,n},\mu_r^{X,N})\big| dr
\\
& \qquad \times \big| \partial_x b^{(k)}\big(X_{\kappa_n(s)}^{i,N,n},\mu_{\kappa_n(s)}^{X,N,n}\big)\big| \Big| \int_{\kappa_n(s)}^s \tilde{\sigma}\big(r,X_{\kappa_n(r)}^{i,N,n},\mu_{\kappa_n(r)}^{X,N,n}\big)dW_r^i \Big |  \notag
\\
& \leq K E\int_{\kappa_n(s)}^s  \big(1+|X_r^{i,N}|+| X_r^{i,N,n}|\big)^{\rho/2+1} \big|X_r^{i,N}- X_r^{i,N,n}\big| dr
\\
& \qquad \times  \Big| \int_{\kappa_n(s)}^s \big(1+\big| X_{\kappa_n(s)}^{i,N,n}\big|\big)^{\rho/2+1} \tilde{\sigma}\big(r,X_{\kappa_n(r)}^{i,N,n},\mu_{\kappa_n(r)}^{X,N,n}\big)dW_r^i \Big |  \notag
\\
& \leq K E\int_{\kappa_n(s)}^s  n^{-\frac{1}{2}}\big(1+|X_r^{i,N}|+| X_r^{i,N,n}|\big)^{\rho/2+1} \notag
\\
& \qquad \times \Big| \int_{\kappa_n(s)}^s \big(1+\big| X_{\kappa_n(s)}^{i,N,n}\big|\big)^{\rho/2+1} \tilde{\sigma}\big(r,X_{\kappa_n(r)}^{i,N,n},\mu_{\kappa_n(r)}^{X,N,n}\big)dW_r^i \Big |
 \\
& \qquad \times n^\frac{1}{2}\big|X_r^{i,N}- X_r^{i,N,n}\big| dr
\end{align*}
which on using Young's inequality and H\"older's inequality gives, 
\begin{align*}
T_{21}  \leq &  K E\int_{\kappa_n(s)}^s  n^{-1}\big(1+|X_r^{i,N}|+| X_r^{i,N,n}|\big)^{\rho+2} 
\\
& \qquad \times \Big| \int_{\kappa_n(s)}^s \big(1+\big| X_{\kappa_n(s)}^{i,N,n}\big|\big)^{\rho/2+1} \tilde{\sigma}\big(r,X_{\kappa_n(r)}^{i,N,n},\mu_{\kappa_n(r)}^{X,N,n}\big)dW_r^i \Big |^2 dr
 \\ 
&  + K E\int_{\kappa_n(s)}^s n \big|X_r^{i,N}- X_r^{i,N,n}\big|^2 dr
\\
 \leq &  K \int_{\kappa_n(s)}^s  n^{-1} \big\{E\big(1+|X_r^{i,N}|+| X_r^{i,N,n}|\big)^{2\rho+2}\big\}^\frac{1}{2} 
\\
& \qquad \times \Big\{ E\Big| \int_{\kappa_n(s)}^s \big(1+\big| X_{\kappa_n(s)}^{i,N,n}\big|\big)^{\rho/2+1} \tilde{\sigma}\big(r,X_{\kappa_n(r)}^{i,N,n},\mu_{\kappa_n(r)}^{X,N,n}\big)dW_r^i \Big |^4\Big\}^\frac{1}{2} dr
 \\
& + K \sup_{r\in [0,s]}E\big|X_r^{i,N}- X_r^{i,N,n}\big|^2 
\end{align*}
for any $s\in[0,T]$ and $n,N \in \mathbb{N}$. Further use of Lemma \ref{lem:moment:bound}, Corollary \ref{cor:sigma:tilde:bound} and H\"older's inequality yields,
\begin{align}
T_{21} & \leq K \int_{\kappa_n(s)}^s  n^{-1}  \Big\{ E\Big( \int_{\kappa_n(s)}^s \big(1+\big| X_{\kappa_n(s)}^{i,N,n}\big|\big)^{\rho+2} \big|\tilde{\sigma}\big(r,X_{\kappa_n(r)}^{i,N,n},\mu_{\kappa_n(r)}^{X,N,n}\big) \big|^2 dr \Big)^2\Big\}^\frac{1}{2} dr \notag
 \\
& \qquad + K \sup_{r\in [0,s]}E\big|X_r^{i,N}- X_r^{i,N,n}\big|^2  \notag
\\
& \leq K \int_{\kappa_n(s)}^s  n^{-1}  \Big\{ n^{-1}E \int_{\kappa_n(s)}^s \big(1+\big| X_{\kappa_n(s)}^{i,N,n}\big|\big)^{2\rho+4} \big|\tilde{\sigma}\big(r,X_{\kappa_n(r)}^{i,N,n},\mu_{\kappa_n(r)}^{X,N,n}\big) \big|^4 dr \Big\}^\frac{1}{2} dr \notag
 \\
& \qquad + K \sup_{r\in [0,s]}E\big|X_r^{i,N}- X_r^{i,N,n}\big|^2  \notag
\\
& \leq K \int_{\kappa_n(s)}^s  n^{-1}  \Big\{ n^{-1} \int_{\kappa_n(s)}^s \big\{E\big(1+\big| X_{\kappa_n(s)}^{i,N,n}\big|\big)^{4\rho+8}\big\}^\frac{1}{2} \big\{E\big|\tilde{\sigma}\big(r,X_{\kappa_n(r)}^{i,N,n},\mu_{\kappa_n(r)}^{X,N,n}\big) \big|^8  \big\}^\frac{1}{2} dr \Big\}^\frac{1}{2} dr \notag
 \\
& \qquad + K \sup_{r\in [0,s]}E\big|X_r^{i,N}- X_r^{i,N,n}\big|^2  \notag
\\
& \leq  K \sup_{i\in\{1,\ldots, N\}} \sup_{r\in [0,s]}E\big|X_r^{i,N}- X_r^{i,N,n}\big|^2  + K n^{-3}\label{eq:T21}
\end{align}
for any $s\in[0,T]$ and $n,N\in\mathbb{N}$. 

Using Cauchy-Schwarz inequality, Assumption \ref{as:b:lip} and Remark \ref{rem:b:growth}, $T_{22}$ can be estimated as, 
\begin{align*}
T_{22}& :=  E\sum_{k=1}^d \int_{\kappa_n(s)}^s \big\{b^{(k)}(X_r^{i,N,n},\mu_r^{X,N})-b^{(k)}(X_r^{i,N,n},\mu_r^{X,N,n})\big\} dr
\\
& \qquad \times \Big\langle \partial_x b^{(k)}\Big(X_{\kappa_n(s)}^{i,N,n},\mu_{\kappa_n(s)}^{X,N,n}\Big), \int_{\kappa_n(s)}^s \tilde{\sigma}\big(r,X_{\kappa_n(r)}^{i,N,n},\mu_{\kappa_n(r)}^{X,N,n}\big)dW_r^i \Big \rangle  \notag
\\
& \leq K E\sum_{k=1}^d \int_{\kappa_n(s)}^s \big|b^{(k)}(X_r^{i,N,n},\mu_r^{X,N})-b^{(k)}(X_r^{i,N,n},\mu_r^{X,N,n})\big| dr
\\
& \qquad \times \big| \partial_x b^{(k)}\big(X_{\kappa_n(s)}^{i,N,n},\mu_{\kappa_n(s)}^{X,N,n}\big)\big| \Big| \int_{\kappa_n(s)}^s \tilde{\sigma}\big(r,X_{\kappa_n(r)}^{i,N,n},\mu_{\kappa_n(r)}^{X,N,n}\big)dW_r^i \Big |  \notag
\\
& \leq K E \int_{\kappa_n(s)}^s n^\frac{1}{2}\Big\{\frac{1}{N} \sum_{j=1}^N \big|X_r^{j,N} - X_r^{j,N,n}\big|^2 \Big\}^\frac{1}{2} 
\\
& \qquad \times n^{-\frac{1}{2}} \Big| \int_{\kappa_n(s)}^s \big(1+\big|X_{\kappa_n(s)}^{i,N,n}\big|\big)^{\rho/2+1}  \tilde{\sigma}\big(r,X_{\kappa_n(r)}^{i,N,n},\mu_{\kappa_n(r)}^{X,N,n}\big)dW_r^i \Big |  dr \notag
\end{align*}
which on the application of Young's inequality, Lemma \ref{lem:moment:bound}, H\"older's inequality and Corollary \ref{cor:sigma:tilde:bound} yields,
\begin{align}
T_{22} & \leq K E \int_{\kappa_n(s)}^s n \frac{1}{N} \sum_{j=1}^N \big|X_r^{j,N,n} - X_r^{j,N,n}\big|^2  dr \notag
\\
& \qquad + K n^{-2} E   \Big| \int_{\kappa_n(s)}^s \big(1+\big|X_{\kappa_n(s)}^{i,N,n}\big|\big)^{\rho/2+1}  \tilde{\sigma}\big(r,X_{\kappa_n(r)}^{i,N,n},\mu_{\kappa_n(r)}^{X,N,n}\big)dW_r^i \Big |^2    \notag
\\
& \leq K \sup_{i\in\{1,\cdots,N\}}\sup_{r\in[0,s]}E \big|X_r^{i,N} - X_r^{i,N,n}\big|^2 \notag
\\
& \qquad + K n^{-2} E     \int_{\kappa_n(s)}^s \big(1+\big|X_{\kappa_n(s)}^{i,N,n}\big|\big)^{\rho+2} \big| \tilde{\sigma}\big(r,X_{\kappa_n(r)}^{i,N,n},\mu_{\kappa_n(r)}^{X,N,n}\big)\big|^2 dr    \notag
\\
& \leq K \sup_{i\in\{1,\cdots,N\}}\sup_{r\in[0,s]}E \big|X_r^{i,N} - X_r^{i,N,n}\big|^2   + K n^{-3}    \label{eq:T22}
\end{align}
for any $s\in[0,T]$ and $n,N\in\mathbb{N}$. 

Further, using Cauchy-Schwarz inequality, H\"older's inequality, Lemma \ref{lem:b-b}, Remark~\ref{rem:b:growth}, Corollary \ref{cor:sigma:tilde:bound} and Lemma \ref{lem:moment:bound}, $T_{23}$ can be estimated by, 
\begin{align}
T_{23} & := E\sum_{k=1}^d \int_{\kappa_n(s)}^s \big\{b^{(k)}(X_r^{i,N,n},\mu_r^{X,N,n}) -b_n^{(k)}(X_{\kappa_n(r)}^{i,N, n},\mu_{\kappa_n(r)}^{X,N,n})\big\} dr \notag
\\
& \qquad \times \Big\langle \partial_x b^{(k)}\big(X_{\kappa_n(s)}^{i,N,n},\mu_{\kappa_n(s)}^{X,N,n}\big), \int_{\kappa_n(s)}^s \tilde{\sigma}\big(r,X_{\kappa_n(r)}^{i,N,n},\mu_{\kappa_n(r)}^{X,N,n}\big)dW_r^i \Big \rangle  \notag
\\
& \leq E\int_{\kappa_n(s)}^s \big|b(X_r^{i,N,n},\mu_r^{X,N,n}) -b_n(X_{\kappa_n(r)}^{i,N, n},\mu_{\kappa_n(r)}^{X,N,n})\big| \notag
\\
& \qquad \times \Big| \int_{\kappa_n(s)}^s \big(1+ \big| X_{\kappa_n(s)}^{i,N,n}\big|\big)^{\rho/2+1}  \tilde{\sigma}\big(r,X_{\kappa_n(r)}^{i,N,n},\mu_{\kappa_n(r)}^{X,N,n}\big)dW_r^i \Big | dr \notag
\\
& \leq \int_{\kappa_n(s)}^s \big\{E\big|b(X_r^{i,N,n},\mu_r^{X,N,n}) -b_n(X_{\kappa_n(r)}^{i,N, n},\mu_{\kappa_n(r)}^{X,N,n})\big|^2\big\}^\frac{1}{2} \notag
\\
& \qquad \times \Big\{ E\Big| \int_{\kappa_n(s)}^s \big(1+ \big| X_{\kappa_n(s)}^{i,N,n}\big|\big)^{\rho/2+1}  \tilde{\sigma}\big(r,X_{\kappa_n(r)}^{i,N,n},\mu_{\kappa_n(r)}^{X,N,n}\big)dW_r^i \Big |^2 \Big\}^\frac{1}{2} dr \notag
\\
& \leq K n^{-3/2}  \Big\{ E \int_{\kappa_n(s)}^s \big(1+ \big| X_{\kappa_n(s)}^{i,N,n}\big|\big)^{\rho+2} \big|  \tilde{\sigma}\big(r,X_{\kappa_n(r)}^{i,N,n},\mu_{\kappa_n(r)}^{X,N,n}\big) \big|^2 dr \Big\}^\frac{1}{2} dr  \leq K n^{-2} \label{eq:T23}
\end{align}
for any $s\in[0,T]$ and $n,N\in\mathbb{N}$. 

For estimating $T_{24}$, one applies Cauchy-Schwarz inequality, Young's inequality and Remark \ref{rem:b:growth} to get, 
\begin{align*}
T_{24} & :=E\sum_{k=1}^d \sum_{l=1}^m \int_{\kappa_n(s)}^s  \big\{\sigma^{(k,l)}(X_r^{i,N},\mu_r^{X,N})-\sigma^{(k,l)}(X_r^{i,N,n},\mu_r^{X,N}) \big\}dW_r^{(l),i}
\\
& \qquad \times \Big\langle \partial_x b^{(k)}\big(X_{\kappa_n(s)}^{i,N,n},\mu_{\kappa_n(s)}^{X,N,n}\big), \int_{\kappa_n(s)}^s \tilde{\sigma}\big(r,X_{\kappa_n(r)}^{i,N,n},\mu_{\kappa_n(r)}^{X,N,n}\big)dW_r^i \Big \rangle  \notag
\\
& = E\sum_{k=1}^d \sum_{l=1}^m \int_{\kappa_n(s)}^s  \big\{\sigma^{(k,l)}(X_r^{i,N},\mu_r^{X,N})-\sigma^{(k,l)}(X_r^{i,N,n},\mu_r^{X,N}) \big\}dW_r^{(l),i}
\\
& \qquad \times  \sum_{l_1=1}^m  \int_{\kappa_n(s)}^s \Big\langle \partial_x b^{(k)}\big(X_{\kappa_n(s)}^{i,N,n},\mu_{\kappa_n(s)}^{X,N,n}\big), \tilde{\sigma}^{(l_1)}\big(r,X_{\kappa_n(r)}^{i,N,n},\mu_{\kappa_n(r)}^{X,N,n}\big) \Big \rangle  dW_r^{(l_1),i} \notag
\\
& =  E\sum_{k=1}^d \sum_{l=1}^d\int_{\kappa_n(s)}^s  \big\{\sigma^{(k,l)}(X_r^{i,N},\mu_r^{X,N})-\sigma^{(k,l)}(X_r^{i,N,n},\mu_r^{X,N}) \big\}
\\
& \qquad \times    \Big\langle \partial_x b^{(k)}\big(X_{\kappa_n(s)}^{i,N,n},\mu_{\kappa_n(s)}^{X,N,n}\big), \tilde{\sigma}^{(l)}\big(r,X_{\kappa_n(r)}^{i,N,n},\mu_{\kappa_n(r)}^{X,N,n}\big) \Big \rangle  dr \notag
\\
& \leq K  E\sum_{k=1}^d \sum_{l=1}^d\int_{\kappa_n(s)}^s n^{1/2} \big|\sigma^{(k,l)}(X_r^{i,N},\mu_r^{X,N})-\sigma^{(k,l)}(X_r^{i,N,n},\mu_r^{X,N}) \big|
\\
& \qquad \times   n^{-1/2} \big| \partial_x b^{(k)}\big(X_{\kappa_n(s)}^{i,N,n},\mu_{\kappa_n(s)}^{X,N,n}\big)\big| \big| \tilde{\sigma}^{(l)}\big(r,X_{\kappa_n(r)}^{i,N,n},\mu_{\kappa_n(r)}^{X,N,n}\big) \big |  dr \notag
\\
 & \leq  K E \int_{\kappa_n(s)}^s n \big| X_r^{i,N}-X_r^{i,N,n} \big|^2 dr \notag
\\
& \qquad + K E\sum_{l=1}^d \int_{\kappa_n(s)}^s n^{-1}   \big(1+ \big|X_{\kappa_n(r)}^{i,N,n} \big| \big)^{\rho+2} \big|\tilde{\sigma}^{(l)}\big(r,X_{\kappa_n(r)}^{i,N,n},\mu_{\kappa_n(r)}^{X,N,n}\big) \big |^2  dr \notag
\end{align*}
which on using  H\"older's inequality, Corollary \ref{cor:sigma:tilde:bound} and Lemma \ref{lem:moment:bound} yields,
\begin{align} \label{eq:T24}
T_{24} & \leq K \sup_{i\in\{1,\cdots,N\}}\sup_{r\in[0,s]}E  \big| X_r^{i,N}-X_r^{i,N,n} \big|^2  + K n^{-2}   
\end{align}
for any $s\in[0,T]$ and $n,N\in\mathbb{N}$. 

Again notice that, 
\begin{align*}
T_{25}& := E\sum_{k=1}^d  \sum_{l=1}^m\int_{\kappa_n(s)}^s \big\{\sigma^{(k,l)}(X_r^{i,N,n},\mu_r^{X,N})-\sigma^{(k,l)}(X_r^{i,N,n},\mu_r^{X,N,n}) \big\} dW_r^{(l),i}
\\
& \qquad \times \Big\langle \partial_x b^{(k)}\big(X_{\kappa_n(s)}^{i,N,n},\mu_{\kappa_n(s)}^{X,N,n}\big), \int_{\kappa_n(s)}^s \tilde{\sigma}\big(r,X_{\kappa_n(r)}^{i,N,n},\mu_{\kappa_n(r)}^{X,N,n}\big)dW_r^i \Big \rangle  \notag
\\
& = E\sum_{k=1}^d  \sum_{l=1}^m \int_{\kappa_n(s)}^s \big\{\sigma^{(k,l)}(X_r^{i,N,n},\mu_r^{X,N})-\sigma^{(k,l)}(X_r^{i,N,n},\mu_r^{X,N,n}) \big\} dW_r^{(l),i}
\\
& \qquad \times \sum_{l_1=1}^m \int_{\kappa_n(s)}^s \Big\langle \partial_x b^{(k)}\Big(X_{\kappa_n(s)}^{i,N,n},\mu_{\kappa_n(s)}^{X,N,n}\Big),  \tilde{\sigma}^{(l_1)}\big(r,X_{\kappa_n(r)}^{i,N,n},\mu_{\kappa_n(r)}^{X,N,n}\big)\Big \rangle  dW_r^{(l_1),i}  \notag
\\
& = E\sum_{k=1}^d  \sum_{l=1}^m \int_{\kappa_n(s)}^s \big\{\sigma^{(k,l)}(X_r^{i,N,n},\mu_r^{X,N})-\sigma^{(k,l)}(X_r^{i,N,n},\mu_r^{X,N,n}) \big\} 
\\
& \qquad \times \Big\langle \partial_x b^{(k)}\big(X_{\kappa_n(s)}^{i,N,n},\mu_{\kappa_n(s)}^{X,N,n}\big),  \tilde{\sigma}^{(l)}\big(r,X_{\kappa_n(r)}^{i,N,n},\mu_{\kappa_n(r)}^{X,N,n}\big)\Big \rangle  dr \notag
\end{align*}
which by applying Cauchy-Schwarz inequality, Assumption \ref{as:sig:lip}, Remark \ref{rem:b:growth}, Young's inequality, H\"older's inequality, Lemma \ref{lem:moment:bound} and Corollary \ref{cor:sigma:tilde:bound} gives,  
\begin{align}
T_{25} & \leq K E \sum_{l=1}^m \int_{\kappa_n(s)}^s \mathcal{W}_2\big(\mu_r^{X,N}, \mu_r^{X,N,n}\big)  \big(1+\big|X_{\kappa_n(s)}^{i,N,n}\big|\big)^{\rho/2+1} \big|  \tilde{\sigma}^{(l)}\big(r,X_{\kappa_n(r)}^{i,N,n},\mu_{\kappa_n(r)}^{X,N,n}\big)\big |  dr \notag
\\
& \leq K E \sum_{l=1}^m \int_{\kappa_n(s)}^s n^\frac{1}{2} \Big\{\frac{1}{N} \sum_{j=1}^N \big|X_{r}^{i,N}-X_{r}^{i,N,n}\big|^2\Big\}^\frac{1}{2} \notag
\\
& \qquad \times  n^{-\frac{1}{2} }\big(1+\big|X_{\kappa_n(s)}^{i,N,n}\big|\big)^{\rho/2+1} \big|  \tilde{\sigma}^{(l)}\big(r,X_{\kappa_n(r)}^{i,N,n},\mu_{\kappa_n(r)}^{X,N,n}\big)\big |  dr \notag
\\
& \leq K E \sum_{l=1}^m \int_{\kappa_n(s)}^s n \frac{1}{N} \sum_{j=1}^N \big|X_{r}^{i,N}-X_{r}^{i,N,n}\big|^2 dr \notag
\\
& \qquad +  K E \sum_{l=1}^m \int_{\kappa_n(s)}^s n^{-1} \big(1+\big|X_{\kappa_n(s)}^{i,N,n}\big|\big)^{\rho+2} \big|  \tilde{\sigma}^{(l)}\big(r,X_{\kappa_n(r)}^{i,N,n},\mu_{\kappa_n(r)}^{X,N,n}\big)\big |^2  dr \notag
\\
& \leq K \sup_{i\in\{1,\cdots,N\}}\sup_{r\in[0,s]}E  \big|X_{r}^{i,N}-X_{r}^{i,N,n}\big|^2  +  K  n^{-2} \label{eq:T25}
\end{align}
for any $s\in[0,T]$ and $n,N\in\mathbb{N}$. 

Proceeding as before, by Cauchy-Schwarz inequality and Remark~\ref{rem:b:growth},
\begin{align*}
T_{26}& := E\sum_{k=1}^d \sum_{l=1}^m \int_{\kappa_n(s)}^s \big\{\sigma^{(k,l)}(X_r^{i,N,n},\mu_r^{X,N, n})-\tilde{\sigma}^{(k,l)}(r, X_{\kappa_n(r)}^{i,N, n},\mu_{\kappa_n(r)}^{X,N,n})\big\} dW_r^{(l),i}
\\
& \qquad \times \Big\langle \partial_x b^{(k)}\big(X_{\kappa_n(s)}^{i,N,n},\mu_{\kappa_n(s)}^{X,N,n}\big), \int_{\kappa_n(s)}^s \tilde{\sigma}\big(r,X_{\kappa_n(r)}^{i,N,n},\mu_{\kappa_n(r)}^{X,N,n}\big)dW_r^i \Big \rangle  \notag
\\
& = E\sum_{k=1}^d \sum_{l=1}^m \int_{\kappa_n(s)}^s \big\{\sigma^{(k,l)}(X_r^{i,N,n},\mu_r^{X,N, n})-\tilde{\sigma}^{(k,l)}(r, X_{\kappa_n(r)}^{i,N, n},\mu_{\kappa_n(r)}^{X,N,n})\big\} dW_r^{(l),i}
\\
& \qquad \times \sum_{l_1=1}^m   \int_{\kappa_n(s)}^s \Big\langle \partial_x b^{(k)}\big(X_{\kappa_n(s)}^{i,N,n},\mu_{\kappa_n(s)}^{X,N,n}\big), \tilde{\sigma}^{(l_1)}\big(r,X_{\kappa_n(r)}^{i,N,n},\mu_{\kappa_n(r)}^{X,N,n}\big) \Big \rangle  dW_r^{(l_1),i} \notag
\\
& =E\sum_{k=1}^d \sum_{l=1}^m \int_{\kappa_n(s)}^s \big\{\sigma^{(k,l)}(X_r^{i,N,n},\mu_r^{X,N, n})-\tilde{\sigma}^{(k,l)}(r, X_{\kappa_n(r)}^{i,N, n},\mu_{\kappa_n(r)}^{X,N,n})\big\} 
\\
& \qquad \times  \Big\langle \partial_x b^{(k)}\big(X_{\kappa_n(s)}^{i,N,n},\mu_{\kappa_n(s)}^{X,N,n}\big), \tilde{\sigma}^{(l)}\big(r,X_{\kappa_n(r)}^{i,N,n},\mu_{\kappa_n(r)}^{X,N,n}\big) \Big \rangle  dr \notag
\\
& \leq KE\sum_{k=1}^d \sum_{l=1}^m \int_{\kappa_n(s)}^s   \big|\sigma^{(k,l)}(X_r^{i,N,n},\mu_r^{X,N, n})-\tilde{\sigma}^{(k,l)}(r, X_{\kappa_n(r)}^{i,N, n},\mu_{\kappa_n(r)}^{X,N,n})\big| 
\\
& \qquad \times  \big(1+ |X_{\kappa_n(s)}^{i,N,n}\big|\big)^\frac{\rho}{2} \big|\tilde{\sigma}^{(l)}\big(r,X_{\kappa_n(r)}^{i,N,n},\mu_{\kappa_n(r)}^{X,N,n}\big) \big |  dr \notag
\end{align*}
which on using H\"older's inequality, Lemma \ref{lem:sigma:rate}, Lemma \ref{lem:moment:bound} and Corollary \ref{cor:sigma:tilde:bound} gives,
\begin{align}
T_{26} & \leq KE\sum_{k=1}^d \sum_{l=1}^m \int_{\kappa_n(s)}^s  \big\{ E\big|\sigma^{(k,l)}(X_r^{i,N,n},\mu_r^{X,N, n})-\tilde{\sigma}^{(k,l)}(r, X_{\kappa_n(r)}^{i,N, n},\mu_{\kappa_n(r)}^{X,N,n})\big|^2 \big\}^\frac{1}{2} dr  \notag
\\
& \qquad \times \big\{E\big(1+ |X_{\kappa_n(s)}^{i,N,n}\big|\big)^{2\rho} \big\}^\frac{1}{4} \big\{E\big|\tilde{\sigma}^{(l)}\big(r,X_{\kappa_n(r)}^{i,N,n},\mu_{\kappa_n(r)}^{X,N,n}\big) \big |^4\big\}^\frac{1}{4}  dr \leq K n^{-2} \label{eq:T26}
\end{align}
for any $s\in[0,T]$ and $n,N\in\mathbb{N}$. Substituting values from \eqref{eq:T21} to \eqref{eq:T26} in \eqref{eq:T2} gives, 
\begin{align} \label{eq:T2:final}
T_2 \leq \sup_{i\in\{1,\cdots,N\}}\sup_{r\in[0,s]} E|X_r^{i,N}-X_r^{i,N,n}|^2 + K n^{-2}
\end{align}
for any $s\in[0,T]$ and $n,N\in\mathbb{N}$. 

Finally, we proceed with estimation of $T_3$ of equation \eqref{eq:T}.  By Young's inequality and Cauchy-Schwarz inequality,
\begin{align*}
T_3  :=& E\sum_{k=1}^d \big(X_s^{(k),i,N}  - X_{s}^{(k), i,N,n}\big) \frac{1}{N} \sum_{j=1}^N \Big\langle \partial_\mu b^{(k)}\big(X_{\kappa_n(s)}^{i,N,n},\mu_{\kappa_n(s)}^{X,N,n}, X_{\kappa_n(s)}^{j,N,n}\big), X_{s}^{j,N,n}- X_{\kappa_n(s)}^{j,N,n} \Big \rangle \notag
\\
 =&  E\sum_{k=1}^d \big(X_s^{(k),i,N}  - X_{s}^{(k), i,N,n}\big)  
 \\
 & \qquad \times \frac{1}{N} \sum_{j=1}^N \Big\langle \partial_\mu b^{(k)}\big(X_{\kappa_n(s)}^{i,N,n},\mu_{\kappa_n(s)}^{X,N,n}, X_{\kappa_n(s)}^{j,N,n}\big), \int_{\kappa_n(s)}^{s} b_n(X_{\kappa_n(r)}^{i,N,n}, \mu_{\kappa_n(r)}^{X,N,n}) dr  \Big \rangle \notag
\\
&+ E\sum_{k=1}^d \big(X_s^{(k),i,N}  - X_{s}^{(k), i,N,n}\big) 
\\
& \qquad \times  \frac{1}{N} \sum_{j=1}^N \Big\langle \partial_\mu b^{(k)}\big(X_{\kappa_n(s)}^{i,N,n},\mu_{\kappa_n(s)}^{X,N,n}, X_{\kappa_n(s)}^{j,N,n}\big), \int_{\kappa_n(s)}^{s} \tilde{\sigma}(r, X_{\kappa_n(r)}^{i,N,n}, \mu_{\kappa_n(r)}^{X,N,n})dW_r^i  \Big \rangle \notag
\\
 \leq&  K E\big|X_s^{i,N}  - X_{s}^{i,N,n}\big|^2 
\\
& + K \sum_{k=1}^d  E \frac{1}{N^2} N \sum_{j=1}^N \big|  \partial_\mu b^{(k)}\big(X_{\kappa_n(s)}^{i,N,n},\mu_{\kappa_n(s)}^{X,N,n}, X_{\kappa_n(s)}^{j,N,n}\big)\big|^2 \Big| \int_{\kappa_n(s)}^{s} b_n(X_{\kappa_n(r)}^{i,N,n}, \mu_{\kappa_n(r)}^{X,N,n}) dr  \Big|^2  \notag
\\
&+  E\sum_{k=1}^d \big(X_{\kappa_n(s)}^{(k),i,N}  - X_{\kappa_n(s)}^{(k), i,N,n}\big)
\\
& \qquad \times  \frac{1}{N} \sum_{j=1}^N \Big\langle \partial_\mu b^{(k)}\big(X_{\kappa_n(s)}^{i,N,n},\mu_{\kappa_n(s)}^{X,N,n}, X_{\kappa_n(s)}^{j,N,n}\big), \int_{\kappa_n(s)}^{s} \tilde{\sigma}(r, X_{\kappa_n(r)}^{i,N,n}, \mu_{\kappa_n(r)}^{X,N,n})dW_r^i  \Big \rangle \notag
\\
&+  E\sum_{k=1}^d \big(X_s^{(k),i,N}  - X_{s}^{(k), i,N,n}-X_{\kappa_n(s)}^{(k),i,N}  +X_{\kappa_n(s)}^{(k), i,N,n}\big) 
\\
& \qquad \times \frac{1}{N} \sum_{j=1}^N \Big\langle \partial_\mu b^{(k)}\big(X_{\kappa_n(s)}^{i,N,n},\mu_{\kappa_n(s)}^{X,N,n}, X_{\kappa_n(s)}^{j,N,n}\big), \int_{\kappa_n(s)}^{s} \tilde{\sigma}(r, X_{\kappa_n(r)}^{i,N,n}, \mu_{\kappa_n(r)}^{X,N,n})dW_r^i  \Big \rangle \notag
\end{align*}
for any $s\in[0,T]$ and $n,N\in\mathbb{N}$. 
Notice that third term on the right hand side of the above expression is zero. Further, by Remark \ref{rem:b:growth}, Remark \ref{rem:bn:growth} and Lemma \ref{lem:moment:bound},  one obtains
\begin{align*}
T_3 & \leq K E\big|X_s^{i,N}  - X_{s}^{i,N,n}\big|^2   + K n^{-2}
\\
& +  E\sum_{k=1}^d \int_{\kappa_n(s)}^s \big\{b^{(k)}\big(X_r^{i,N}, \mu_r^{X,N}\big)-b_n^{(k)}\big(X_{\kappa_n(r)}^{i,N, n}, \mu_{\kappa_n(r)}^{X,N,n}\big)\big\} dr
\\
& \qquad \times \frac{1}{N} \sum_{j=1}^N \Big\langle \partial_\mu b^{(k)}\big(X_{\kappa_n(s)}^{i,N,n},\mu_{\kappa_n(s)}^{X,N,n}, X_{\kappa_n(s)}^{j,N,n}\big), \int_{\kappa_n(s)}^{s} \tilde{\sigma}(r, X_{\kappa_n(r)}^{i,N,n}, \mu_{\kappa_n(r)}^{X,N,n})dW_r^i  \Big \rangle \notag
\\
& +  E\sum_{k=1}^d \sum_{l=1}^m \int_{\kappa_n(s)}^s \big\{\sigma^{(k,l)}\big(X_r^{i,N}, \mu_r^{X,N}\big)-\tilde{\sigma}^{(k,l)}\big(r, X_{\kappa_n(r)}^{i,N, n}, \mu_{\kappa_n(r)}^{X,N,n}\big)\big\} dW_r^{(l),i}
\\
& \qquad \times \frac{1}{N} \sum_{j=1}^N \Big\langle \partial_\mu b^{(k)}\big(X_{\kappa_n(s)}^{i,N,n},\mu_{\kappa_n(s)}^{X,N,n}, X_{\kappa_n(s)}^{j,N,n}\big), \int_{\kappa_n(s)}^{s} \tilde{\sigma}(r, X_{\kappa_n(r)}^{i,N,n}, \mu_{\kappa_n(r)}^{X,N,n})dW_r^i  \Big \rangle \notag
\end{align*}
for any $s\in[0,T]$ and $n,N\in\mathbb{N}$. The above expression can further be written as, 
\begin{align}
T_3 & \leq K E\big|X_s^{i,N}  - X_{s}^{i,N,n}\big|^2  + K n^{-2} \notag
\\
& +  E\sum_{k=1}^d \int_{\kappa_n(s)}^s \big\{b^{(k)}\big(X_r^{i,N}, \mu_r^{X,N}\big)-b^{(k)}\big(X_{r}^{i,N, n}, \mu_{r}^{X,N}\big)\big\} dr \notag
\\
& \qquad \times \frac{1}{N} \sum_{j=1}^N \Big\langle \partial_\mu b^{(k)}\big(X_{\kappa_n(s)}^{i,N,n},\mu_{\kappa_n(s)}^{X,N,n}, X_{\kappa_n(s)}^{j,N,n}\big), \int_{\kappa_n(s)}^{s} \tilde{\sigma}(r, X_{\kappa_n(r)}^{i,N,n}, \mu_{\kappa_n(r)}^{X,N,n})dW_r^i  \Big \rangle \notag
\\
& +  E\sum_{k=1}^d \int_{\kappa_n(s)}^s \big\{b^{(k)}\big(X_r^{i,N,n}, \mu_r^{X,N}\big)-b^{(k)}\big(X_{r}^{i,N, n}, \mu_{r}^{X,N,n}\big)\big\} dr \notag
\\
& \qquad \times \frac{1}{N} \sum_{j=1}^N \Big\langle \partial_\mu b^{(k)}\big(X_{\kappa_n(s)}^{i,N,n},\mu_{\kappa_n(s)}^{X,N,n}, X_{\kappa_n(s)}^{j,N,n}\big), \int_{\kappa_n(s)}^{s} \tilde{\sigma}(r, X_{\kappa_n(r)}^{i,N,n}, \mu_{\kappa_n(r)}^{X,N,n})dW_r^i  \Big \rangle \notag
\\
& + E\sum_{k=1}^d \int_{\kappa_n(s)}^s \big\{b^{(k)}\big(X_r^{i,N,n}, \mu_r^{X,N,n}\big)-b_n^{(k)}\big(X_{\kappa_n(r)}^{i,N, n}, \mu_{\kappa_n(r)}^{X,N,n}\big)\big\} dr \notag
\\
& \qquad \times \frac{1}{N} \sum_{j=1}^N \Big\langle \partial_\mu b^{(k)}\big(X_{\kappa_n(s)}^{i,N,n},\mu_{\kappa_n(s)}^{X,N,n}, X_{\kappa_n(s)}^{j,N,n}\big), \int_{\kappa_n(s)}^{s} \tilde{\sigma}(r, X_{\kappa_n(r)}^{i,N,n}, \mu_{\kappa_n(r)}^{X,N,n})dW_r^i  \Big \rangle \notag
\\
& +  E\sum_{k=1}^d \sum_{l=1}^m \int_{\kappa_n(s)}^s \big\{\sigma^{(k,l)}\big(X_r^{i,N}, \mu_r^{X,N}\big)-\sigma^{(k,l)}\big(X_{r}^{i,N, n}, \mu_{r}^{X,N}\big)\big\} dW_r^{(l),i} \notag
\\
& \qquad \times \frac{1}{N} \sum_{j=1}^N \Big\langle \partial_\mu b^{(k)}\big(X_{\kappa_n(s)}^{i,N,n},\mu_{\kappa_n(s)}^{X,N,n}, X_{\kappa_n(s)}^{j,N,n}\big), \int_{\kappa_n(s)}^{s} \tilde{\sigma}(r, X_{\kappa_n(r)}^{i,N,n}, \mu_{\kappa_n(r)}^{X,N,n})dW_r^i  \Big \rangle \notag
\\
& +  E\sum_{k=1}^d \sum_{l=1}^m \int_{\kappa_n(s)}^s \big\{\sigma^{(k,l)}\big(X_r^{i,N,n}, \mu_r^{X,N}\big)-\sigma^{(k,l)}\big(X_{r}^{i,N, n}, \mu_{r}^{X,N,n}\big)\big\} dW_r^{(l),i} \notag
\\
& \qquad \times \frac{1}{N} \sum_{j=1}^N \Big\langle \partial_\mu b^{(k)}\big(X_{\kappa_n(s)}^{i,N,n},\mu_{\kappa_n(s)}^{X,N,n}, X_{\kappa_n(s)}^{j,N,n}\big), \int_{\kappa_n(s)}^{s} \tilde{\sigma}(r, X_{\kappa_n(r)}^{i,N,n}, \mu_{\kappa_n(r)}^{X,N,n})dW_r^i  \Big \rangle \notag
\\
& +  E\sum_{k=1}^d \sum_{l=1}^m \int_{\kappa_n(s)}^s \big\{\sigma^{(k,l)}\big(X_r^{i,N,n}, \mu_r^{X,N,n}\big)-\tilde{\sigma}^{(k,l)}\big(r, X_{\kappa_n(r)}^{i,N, n}, \mu_{\kappa_n(r)}^{X,N,n}\big)\big\} dW_r^{(l),i} \notag
\\
& \qquad \times \frac{1}{N} \sum_{j=1}^N \Big\langle \partial_\mu b^{(k)}\big(X_{\kappa_n(s)}^{i,N,n},\mu_{\kappa_n(s)}^{X,N,n}, X_{\kappa_n(s)}^{j,N,n}\big), \int_{\kappa_n(s)}^{s} \tilde{\sigma}(r, X_{\kappa_n(r)}^{i,N,n}, \mu_{\kappa_n(r)}^{X,N,n})dW_r^i  \Big \rangle \notag
\\
& =: K E\big|X_s^{i,N}  - X_{s}^{i,N,n}\big|^2  + K n^{-2} +T_{31}+T_{32}+T_{33}+T_{34}+T_{35}+T_{36} \label{eq:T3}
\end{align}
for any $s\in[0,T]$ and $n,N\in\mathbb{N}$. 

By Cauchy-Schwarz inequality, Assumption \ref{as:b:lip}, Remark \ref{rem:b:growth}
and Young's inequality, 
\begin{align*}
T_{31}& :=  E\sum_{k=1}^d \int_{\kappa_n(s)}^s \big\{b^{(k)}\big(X_r^{i,N}, \mu_r^{X,N}\big)-b^{(k)}\big(X_{r}^{i,N, n}, \mu_{r}^{X,N}\big)\big\} dr
\\
& \qquad \times \frac{1}{N} \sum_{j=1}^N \Big\langle \partial_\mu b^{(k)}\big(X_{\kappa_n(s)}^{i,N,n},\mu_{\kappa_n(s)}^{X,N,n}, X_{\kappa_n(s)}^{j,N,n}\big), \int_{\kappa_n(s)}^{s} \tilde{\sigma}(r, X_{\kappa_n(r)}^{i,N,n}, \mu_{\kappa_n(r)}^{X,N,n})dW_r^i  \Big \rangle \notag
\\
& \leq K  E\sum_{k=1}^d \int_{\kappa_n(s)}^s n^{\frac{1}{2}} \big(1+|X_r^{i,N}|+|X_{r}^{i,N, n}|\big)^{\rho/2+1} \big|X_r^{i,N}-X_{r}^{i,N, n}\big| 
\\
& \qquad \times n^{-\frac{1}{2}}  \frac{1}{N} \sum_{j=1}^N \big| \partial_\mu b^{(k)}\big(X_{\kappa_n(s)}^{i,N,n},\mu_{\kappa_n(s)}^{X,N,n}, X_{\kappa_n(s)}^{j,N,n}\big)\big| \Big|\int_{\kappa_n(s)}^{s} \tilde{\sigma}(r, X_{\kappa_n(r)}^{i,N,n}, \mu_{\kappa_n(r)}^{X,N,n})dW_r^i  \Big| dr \notag
\\
& \leq K  E \int_{\kappa_n(s)}^s n \big|X_r^{i,N}-X_{r}^{i,N, n}\big|^2 dr
\\
& \qquad + K  E\sum_{k=1}^d \int_{\kappa_n(s)}^s n^{-1} \big(1+|X_r^{i,N}|+|X_{r}^{i,N, n}|\big)^{\rho+2} 
\\
& \qquad \times \frac{1}{N}\sum_{j=1}^N\big( \big(1+ |X_{\kappa_n(s)}^{i,N,n}|\big)^{\rho+4}+1+|X_{\kappa_n(s)}^{j,N,n}|\big)^2 \Big|\int_{\kappa_n(s)}^{s} \tilde{\sigma}(r, X_{\kappa_n(r)}^{i,N,n}, \mu_{\kappa_n(r)}^{X,N,n})dW_r^i  \Big|^2  dr \notag
\end{align*}
which on using  H\"older's inequality, Corollary \ref{cor:sigma:tilde:bound} and Lemma \ref{lem:moment:bound} gives,
\begin{align} \label{eq:T31}
T_{31} \leq  K  \sup_{r\in[0,s]} E \big|X_r^{i,N}-X_{r}^{i,N, n}\big|^2 + K n^{-3}
\end{align}
for any $s\in[0,T]$ and $n,N\in\mathbb{N}$. 

Moreover, by using Assumption \ref{as:b:lip} and Cauchy-Schwarz inequality, H\"older's inequality and Remark \ref{rem:b:growth},  one obtains, 
\begin{align*}
T_{32}  := &   E\sum_{k=1}^d \int_{\kappa_n(s)}^s \big\{b^{(k)}\big(X_r^{i,N,n}, \mu_r^{X,N}\big)-b^{(k)}\big(X_{r}^{i,N, n}, \mu_{r}^{X,N,n}\big)\big\} dr
\\
& \times \frac{1}{N} \sum_{j=1}^N \Big\langle \partial_\mu b^{(k)}\big(X_{\kappa_n(s)}^{i,N,n},\mu_{\kappa_n(s)}^{X,N,n}, X_{\kappa_n(s)}^{j,N,n}\big), \int_{\kappa_n(s)}^{s} \tilde{\sigma}(r, X_{\kappa_n(r)}^{i,N,n}, \mu_{\kappa_n(r)}^{X,N,n})dW_r^i  \Big \rangle \notag
\\
\leq &   K  E\sum_{k=1}^d \int_{\kappa_n(s)}^s n^{\frac{1}{2}}\mathcal{W}_2\big(\mu_r^{X,N},\mu_{r}^{X,N,n}  \big)  
\\
& \times n^{-\frac{1}{2}} \frac{1}{N} \sum_{j=1}^N \big| \partial_\mu b^{(k)}\big(X_{\kappa_n(s)}^{i,N,n},\mu_{\kappa_n(s)}^{X,N,n}, X_{\kappa_n(s)}^{j,N,n}\big)\big|\Big| \int_{\kappa_n(s)}^{s} \tilde{\sigma}(r, X_{\kappa_n(r)}^{i,N,n}, \mu_{\kappa_n(r)}^{X,N,n})dW_r^i  \Big | dr\notag
\\
\leq &    K  E \int_{\kappa_n(s)}^s n \mathcal{W}_2\big(\mu_r^{X,N},\mu_{r}^{X,N,n}  \big)^2   dr
\\
& +  K  E\sum_{k=1}^d \int_{\kappa_n(s)}^s  n^{-1} \frac{1}{N} \sum_{j=1}^N \big| \partial_\mu b^{(k)}\big(X_{\kappa_n(s)}^{i,N,n},\mu_{\kappa_n(s)}^{X,N,n}, X_{\kappa_n(s)}^{j,N,n}\big)\big|^2 
\\
&  \times \Big| \int_{\kappa_n(s)}^{s} \tilde{\sigma}(r, X_{\kappa_n(r)}^{i,N,n}, \mu_{\kappa_n(r)}^{X,N,n})dW_r^i  \Big |^2  dr\notag
\\
\leq &   K  E\int_{\kappa_n(s)}^s n  \frac{1}{N}\sum_{j=1}^N |X_r^{j,N}-X_r^{j,N,n}|^2    dr +  K  n^{-3} 
\\
\leq & \sup_{i\{1,\ldots<\}} \sup_{r\in[0,s]} E|X_r^{j,N}-X_r^{j,N,n}|^2 +  K  n^{-3}
\end{align*} 
for any $s\in[0,T]$ and $n, N \in\mathbb{N}$. 

To estimate $T_{33}$, we use Cauchy-Schwarz inequality, H\"older's inequality, Lemma~ \ref{lem:b-b}, Remark \ref{rem:b:growth} and Corollary \ref{cor:sigma:tilde:bound} to obtain,
\begin{align*}
T_{33} & :=   E\sum_{k=1}^d \int_{\kappa_n(s)}^s \big\{b^{(k)}\big(X_r^{i,N,n}, \mu_r^{X,N,n}\big)-b_n^{(k)}\big(X_{\kappa_n(r)}^{i,N, n}, \mu_{\kappa_n(r)}^{X,N,n}\big)\big\} dr
\\
& \qquad \times \frac{1}{N} \sum_{j=1}^N \Big\langle \partial_\mu b^{(k)}\big(X_{\kappa_n(s)}^{i,N,n},\mu_{\kappa_n(s)}^{X,N,n}, X_{\kappa_n(s)}^{j,N,n}\big), \int_{\kappa_n(s)}^{s} \tilde{\sigma}(r, X_{\kappa_n(r)}^{i,N,n}, \mu_{\kappa_n(r)}^{X,N,n})dW_r^i  \Big \rangle \notag
\\
&\leq   \sum_{k=1}^d \frac{1}{N} \sum_{j=1}^N \int_{\kappa_n(s)}^s \big\{E\big|b^{(k)}\big(X_r^{i,N,n}, \mu_r^{X,N,n}\big)-b_n^{(k)}\big(X_{\kappa_n(r)}^{i,N, n}, \mu_{\kappa_n(r)}^{X,N,n}\big)\big|^2\big\}^\frac{1}{2} 
\\
& \qquad \times  \Big\{E\big| \partial_\mu b^{(k)}\big(X_{\kappa_n(s)}^{i,N,n},\mu_{\kappa_n(s)}^{X,N,n}, X_{\kappa_n(s)}^{j,N,n}\big)\big|^2 \Big| \int_{\kappa_n(s)}^{s} \tilde{\sigma}(r, X_{\kappa_n(r)}^{i,N,n}, \mu_{\kappa_n(r)}^{X,N,n})dW_r^i  \Big |^2\Big\}^\frac{1}{2} dr 
\\
&\leq K  n^{-2} \notag
\end{align*}
for any $s\in[0,T]$ and $n,N\in\mathbb{N}$. 

 Notice that, 
\begin{align*}
T_{34}& :=   E\sum_{k=1}^d \sum_{l=1}^m \int_{\kappa_n(s)}^s \big\{\sigma^{(k,l)}\big(X_r^{i,N}, \mu_r^{X,N}\big)-\sigma^{(k,l)}\big(X_{r}^{i,N, n}, \mu_{r}^{X,N}\big)\big\} dW_r^{(l),i}
\\
& \qquad \times \frac{1}{N} \sum_{j=1}^N \Big\langle \partial_\mu b^{(k)}\big(X_{\kappa_n(s)}^{i,N,n},\mu_{\kappa_n(s)}^{X,N,n}, X_{\kappa_n(s)}^{j,N,n}\big), \int_{\kappa_n(s)}^{s} \tilde{\sigma}(r, X_{\kappa_n(r)}^{i,N,n}, \mu_{\kappa_n(r)}^{X,N,n})dW_r^i  \Big \rangle \notag
\\
&=  E\sum_{k=1}^d \sum_{l=1}^m \int_{\kappa_n(s)}^s \big\{\sigma^{(k,l)}\big(X_r^{i,N}, \mu_r^{X,N}\big)-\sigma^{(k,l)}\big(X_{r}^{i,N, n}, \mu_{r}^{X,N}\big)\big\} dW_r^{(l),i}
\\
& \qquad \times \frac{1}{N} \sum_{j=1}^N \sum_{l_1=1}^m \int_{\kappa_n(s)}^{s} \Big\langle \partial_\mu b^{(k)}\big(X_{\kappa_n(s)}^{i,N,n},\mu_{\kappa_n(s)}^{X,N,n}, X_{\kappa_n(s)}^{j,N,n}\big), \tilde{\sigma}^{(l_1)}(r, X_{\kappa_n(r)}^{i,N,n}, \mu_{\kappa_n(r)}^{X,N,n})  \Big \rangle dW_r^{(l_1),i} \notag
\\
&=   E\sum_{k=1}^d \sum_{l=1}^m \int_{\kappa_n(s)}^s \big\{\sigma^{(k,l)}\big(X_r^{i,N}, \mu_r^{X,N}\big)-\sigma^{(k,l)}\big(X_{r}^{i,N, n}, \mu_{r}^{X,N}\big)\big\} 
\\
& \qquad \times \frac{1}{N} \sum_{j=1}^N  \Big\langle \partial_\mu b^{(k)}\big(X_{\kappa_n(s)}^{i,N,n},\mu_{\kappa_n(s)}^{X,N,n}, X_{\kappa_n(s)}^{j,N,n}\big), \tilde{\sigma}^{(l)}(r, X_{\kappa_n(r)}^{i,N,n}, \mu_{\kappa_n(r)}^{X,N,n})  \Big \rangle dr \notag
\end{align*}
which on the application of Cauchy-Schwarz inequality, Young's inequality, Remark~\ref{rem:b:growth} and Corollary~\ref{cor:sigma:tilde:bound} gives,
\begin{align}
T_{34}&\leq   E\sum_{k=1}^d \sum_{l=1}^m \int_{\kappa_n(s)}^s \big|\sigma^{(k,l)}\big(X_r^{i,N}, \mu_r^{X,N}\big)-\sigma^{(k,l)}\big(X_{r}^{i,N, n}, \mu_{r}^{X,N}\big)\big| \notag
\\
& \qquad \times \frac{1}{N} \sum_{j=1}^N  \big| \partial_\mu b^{(k)}\big(X_{\kappa_n(s)}^{i,N,n},\mu_{\kappa_n(s)}^{X,N,n}, X_{\kappa_n(s)}^{j,N,n}\big)\big| \big| \tilde{\sigma}^{(l)}(r, X_{\kappa_n(r)}^{i,N,n}, \mu_{\kappa_n(r)}^{X,N,n})  \big| dr \notag
\\
& \leq K  E\sum_{k=1}^d \sum_{l=1}^m \int_{\kappa_n(s)}^s n^{\frac{1}{2}}\big|X_r^{i,N}-X_{r}^{i,N, n}| \notag
\\
& \qquad \times n^{-\frac{1}{2}} \frac{1}{N} \sum_{j=1}^N  \big| \partial_\mu b^{(k)}\big(X_{\kappa_n(s)}^{i,N,n},\mu_{\kappa_n(s)}^{X,N,n}, X_{\kappa_n(s)}^{j,N,n}\big)\big| \big| \tilde{\sigma}^{(l)}(r, X_{\kappa_n(r)}^{i,N,n}, \mu_{\kappa_n(r)}^{X,N,n})  \big| dr \notag
\\
& \leq K  E\int_{\kappa_n(s)}^s n\big|X_r^{i,N}-X_{r}^{i,N, n}|^2  dr \notag
\\
& \qquad + K E\sum_{k=1}^d \sum_{l=1}^m \int_{\kappa_n(s)}^s    \frac{n^{-1}}{N}  \sum_{j=1}^N  \big| \partial_\mu b^{(k)}\big(X_{\kappa_n(s)}^{i,N,n},\mu_{\kappa_n(s)}^{X,N,n}, X_{\kappa_n(s)}^{j,N,n}\big)\big|^2  \big| \tilde{\sigma}^{(l)}(r, X_{\kappa_n(r)}^{i,N,n}, \mu_{\kappa_n(r)}^{X,N,n})  \big|^2 dr \notag
\\
 & \leq K  \sup_{i\in \{1,\ldots, N\}}\sup_{r\in[0,s]}E\big|X_r^{i,N}-X_{r}^{i,N, n}|^2  + K n^{-2} \label{eq:T34}
\end{align}
for any $s\in[0,T]$ and $n,N\in\mathbb{N}$. 

Also notice that, 
\begin{align*}
T_{35}& :=  E\sum_{k=1}^d \sum_{l=1}^m \int_{\kappa_n(s)}^s \big\{\sigma^{(k,l)}\big(X_r^{i,N,n}, \mu_r^{X,N}\big)-\sigma^{(k,l)}\big(X_{r}^{i,N, n}, \mu_{r}^{X,N,n}\big)\big\} dW_r^{(l),i}
\\
& \qquad \times \frac{1}{N} \sum_{j=1}^N \Big\langle \partial_\mu b^{(k)}\big(X_{\kappa_n(s)}^{i,N,n},\mu_{\kappa_n(s)}^{X,N,n}, X_{\kappa_n(s)}^{j,N,n}\big), \int_{\kappa_n(s)}^{s} \tilde{\sigma}(r, X_{\kappa_n(r)}^{i,N,n}, \mu_{\kappa_n(r)}^{X,N,n})dW_r^i  \Big \rangle \notag
\\
& =   E\sum_{k=1}^d \sum_{l=1}^m \int_{\kappa_n(s)}^s \big\{\sigma^{(k,l)}\big(X_r^{i,N,n}, \mu_r^{X,N}\big)-\sigma^{(k,l)}\big(X_{r}^{i,N, n}, \mu_{r}^{X,N,n}\big)\big\} dW_r^{(l),i}
\\
& \qquad \times \frac{1}{N} \sum_{j=1}^N \sum_{l_1=1}^m \Big\langle \partial_\mu b^{(k)}\big(X_{\kappa_n(s)}^{i,N,n},\mu_{\kappa_n(s)}^{X,N,n}, X_{\kappa_n(s)}^{j,N,n}\big), \int_{\kappa_n(s)}^{s} \tilde{\sigma}^{(l_1)}(r, X_{\kappa_n(r)}^{i,N,n}, \mu_{\kappa_n(r)}^{X,N,n}) \Big \rangle dW_r^{(l_1),i } \notag
\\
& =   E\sum_{k=1}^d \sum_{l=1}^m \int_{\kappa_n(s)}^s \big\{\sigma^{(k,l)}\big(X_r^{i,N,n}, \mu_r^{X,N}\big)-\sigma^{(k,l)}\big(X_{r}^{i,N, n}, \mu_{r}^{X,N,n}\big)\big\} 
\\
& \qquad \times \frac{1}{N} \sum_{j=1}^N  \Big\langle \partial_\mu b^{(k)}\big(X_{\kappa_n(s)}^{i,N,n},\mu_{\kappa_n(s)}^{X,N,n}, X_{\kappa_n(s)}^{j,N,n}\big), \int_{\kappa_n(s)}^{s} \tilde{\sigma}^{(l)}(r, X_{\kappa_n(r)}^{i,N,n}, \mu_{\kappa_n(r)}^{X,N,n}) \Big \rangle dr \notag
\end{align*}
and then one uses Assumption \ref{as:sig:lip}, Remark \ref{rem:b:growth}, Cauchy-Schwarz inequality, Young's inequality, H\"older's inequality, Corollary \ref{cor:sigma:tilde:bound} and Lemma \ref{lem:moment:bound} to obtain,
\begin{align}
T_{35} & \leq K  E\sum_{k=1}^d \sum_{l=1}^m \int_{\kappa_n(s)}^s n^\frac{1}{2} \mathcal{W}_2\big(\mu_r^{X,N}, \mu_{r}^{X,N,n} \big) \notag
\\
& \qquad \times n^{-\frac{1}{2}} \frac{1}{N} \sum_{j=1}^N  \big| \partial_\mu b^{(k)}\big(X_{\kappa_n(s)}^{i,N,n},\mu_{\kappa_n(s)}^{X,N,n}, X_{\kappa_n(s)}^{j,N,n}\big)\big|\Big| \int_{\kappa_n(s)}^{s} \tilde{\sigma}^{(l)}(r, X_{\kappa_n(r)}^{i,N,n}, \mu_{\kappa_n(r)}^{X,N,n}) \Big | dr \notag
 \\
 & \leq K  E\int_{\kappa_n(s)}^s n \frac{1}{N} \sum_{j=1}^N |X_r^{i,N}-X_r^{i,N,n}|^2 dr \notag
\\
& \qquad + K  \sum_{k=1}^d \sum_{l=1}^m \int_{\kappa_n(s)}^s  n^{-1} \frac{1}{N^2} N \sum_{j=1}^N  \big\{E\big| \partial_\mu b^{(k)}\big(X_{\kappa_n(s)}^{i,N,n},\mu_{\kappa_n(s)}^{X,N,n}, X_{\kappa_n(s)}^{j,N,n}\big)\big|^4 \big\}^\frac{1}{4} \notag 
\\
& \qquad \times \Big\{E\Big| \int_{\kappa_n(s)}^{s} \tilde{\sigma}^{(l)}(r, X_{\kappa_n(r)}^{i,N,n}, \mu_{\kappa_n(r)}^{X,N,n}) \Big |^4 \Big\}^\frac{1}{4} dr \notag
\\
& \leq K \sup_{i\in\{1,\cdots,N\}}\sup_{r\in[0,s]} E|X_r^{i,N}-X_r^{i,N,n}|^2 + K n^{-2} \label{eq:T35}
\end{align} 
for any $s\in[0,T]$ and $n,N\in\mathbb{N}$.  

Similarly, 
\begin{align*}
T_{36}&:=  E\sum_{k=1}^d \sum_{l=1}^m \int_{\kappa_n(s)}^s \big\{\sigma^{(k,l)}\big(X_r^{i,N,n}, \mu_r^{X,N,n}\big)-\tilde{\sigma}^{(k,l)}\big(r, X_{\kappa_n(r)}^{i,N, n}, \mu_{\kappa_n(r)}^{X,N,n}\big)\big\} dW_r^{(l),i}
\\
& \qquad \times \frac{1}{N} \sum_{j=1}^N \Big\langle \partial_\mu b^{(k)}\big(X_{\kappa_n(s)}^{i,N,n},\mu_{\kappa_n(s)}^{X,N,n}, X_{\kappa_n(s)}^{j,N,n}\big), \int_{\kappa_n(s)}^{s} \tilde{\sigma}(r, X_{\kappa_n(r)}^{i,N,n}, \mu_{\kappa_n(r)}^{X,N,n})dW_r^i  \Big \rangle \notag
\\
& = E\sum_{k=1}^d \sum_{l=1}^m \int_{\kappa_n(s)}^s \big\{\sigma^{(k,l)}\big(X_r^{i,N,n}, \mu_r^{X,N,n}\big)-\tilde{\sigma}^{(k,l)}\big(r, X_{\kappa_n(r)}^{i,N, n}, \mu_{\kappa_n(r)}^{X,N,n}\big)\big\} dW_r^{(l),i}
\\
& \qquad \times \frac{1}{N} \sum_{j=1}^N \sum_{l_1=1}^m \int_{\kappa_n(s)}^{s} \Big\langle \partial_\mu b^{(k)}\big(X_{\kappa_n(s)}^{i,N,n},\mu_{\kappa_n(s)}^{X,N,n}, X_{\kappa_n(s)}^{j,N,n}\big),  \tilde{\sigma}^{(l_1)}(r, X_{\kappa_n(r)}^{i,N,n}, \mu_{\kappa_n(r)}^{X,N,n})\Big \rangle dW_r^{(l_1),i}   \notag
\\
& =  E\sum_{k=1}^d \sum_{l=1}^m \int_{\kappa_n(s)}^s \big\{\sigma^{(k,l)}\big(X_r^{i,N,n}, \mu_r^{X,N,n}\big)-\tilde{\sigma}^{(k,l)}\big(r, X_{\kappa_n(r)}^{i,N, n}, \mu_{\kappa_n(r)}^{X,N,n}\big)\big\} 
\\
& \qquad \times \frac{1}{N} \sum_{j=1}^N  \Big\langle \partial_\mu b^{(k)}\big(X_{\kappa_n(s)}^{i,N,n},\mu_{\kappa_n(s)}^{X,N,n}, X_{\kappa_n(s)}^{j,N,n}\big),  \tilde{\sigma}^{(l)}(r, X_{\kappa_n(r)}^{i,N,n}, \mu_{\kappa_n(r)}^{X,N,n})\Big \rangle dr   \notag
\end{align*}
and then using H\"older's inequality, Cauchy-Schwarz inequality, Remark \ref{rem:b:growth},  Lemma~\ref{lem:sigma:rate} and Corollary  \ref{cor:sigma:tilde:bound}, one obtains,
\begin{align}
T_{36} & \leq   E\sum_{k=1}^d \sum_{l=1}^m \int_{\kappa_n(s)}^s \big|\sigma^{(k,l)}\big(X_r^{i,N,n}, \mu_r^{X,N,n}\big)-\tilde{\sigma}^{(k,l)}\big(r, X_{\kappa_n(r)}^{i,N, n}, \mu_{\kappa_n(r)}^{X,N,n}\big)\big| \notag
\\
& \qquad \times \frac{1}{N} \sum_{j=1}^N  \big| \partial_\mu b^{(k)}\big(X_{\kappa_n(s)}^{i,N,n},\mu_{\kappa_n(s)}^{X,N,n}, X_{\kappa_n(s)}^{j,N,n}\big)\big |  \big |   \tilde{\sigma}^{(l)}(r, X_{\kappa_n(r)}^{i,N,n}, \mu_{\kappa_n(r)}^{X,N,n})\big | dr   \notag
\\
& \leq   \sum_{k=1}^d \sum_{l=1}^m \int_{\kappa_n(s)}^s \big\{E \big|\sigma^{(k,l)}\big(X_r^{i,N,n}, \mu_r^{X,N,n}\big)-\tilde{\sigma}^{(k,l)}\big(r, X_{\kappa_n(r)}^{i,N, n}, \mu_{\kappa_n(r)}^{X,N,n}\big)\big|^2\big\}^\frac{1}{2} \notag
\\
& \qquad \times \frac{1}{N} \sum_{j=1}^N  \big\{E\big| \partial_\mu b^{(k)}\big(X_{\kappa_n(s)}^{i,N,n},\mu_{\kappa_n(s)}^{X,N,n}, X_{\kappa_n(s)}^{j,N,n}\big)\big |^4\big\}^\frac{1}{4} \big\{ E\big |   \tilde{\sigma}^{(l)}(r, X_{\kappa_n(r)}^{i,N,n}, \mu_{\kappa_n(r)}^{X,N,n})\big |^4\big\}^\frac{1}{4} dr   \notag
\\
&\leq K n^{-2} \label{eq:T36}
\end{align}
for any $s\in[0,T]$ and $n,N\in\mathbb{N}$. Substituting values from \eqref{eq:T31} to \eqref{eq:T36} in equation \eqref{eq:T3}, we get 
\begin{align} \label{eq:T3:final}
T_3 \leq  K \sup_{i\in\{1,\cdots,N\}}\sup_{r\in[0,s]} E|X_r^{i,N}-X_r^{i,N,n}|^2 + K n^{-2}
\end{align}
for any $s\in[0,T]$ and $n,N\in\mathbb{N}$. Substituting estimates from \eqref{eq:T1}, \eqref{eq:T2:final} and \eqref{eq:T3:final} in equation \eqref{eq:T} completes the proof. 
\end{proof}
\subsection*{Proof of Proposition \ref{prop:milstein}} Recall equations \eqref{eq:interactint:particle} and \eqref{eq:scheme} and use It\^o's formula to obtain, 
\begin{align*}
\big|X_t^{i,N} & -X_t^{i,N, n}\big|^2  = 2  \int_0^t \big\langle X_s^{i,N}-X_s^{i,N, n}, b\big(X_s^{i,N}, \mu_s^{X,N}\big)-b_n\big(X_{\kappa_n(s)}^{i,N, n}, \mu_{\kappa_n(s)}^{X,N,n}\big)\big\rangle ds
\\
& +  \int_0^t \big\langle X_s^{i,N}-X_s^{i,N, n}, \big\{\sigma\big(X_s^{i,N}, \mu_s^{X,N}\big)-\tilde{\sigma}\big(s,X_{\kappa_n(s)}^{i,N,n}, \mu_{\kappa_n(s)}^{X,N,n}\big)  \big\}dW_s^i \big\rangle 
\\
& +  \int_0^t \big|\sigma\big(X_s^{i,N}, \mu_s^{X,N}\big)-\tilde{\sigma}\big(s,X_{\kappa_n(s)}^{i,N,n}, \mu_{\kappa_n(s)}^{X,N,n}\big)  \big|^2 ds
\end{align*}
almost surely for any $t\in[0,T]$ and $n,N\in\mathbb{N}$. Hence, 
\begin{align*}
E\big|X_t^{i,N} & -X_t^{i,N, n}\big|^2  \leq  2  E \int_0^t \big\langle X_s^{i,N}-X_s^{i,N, n}, b\big(X_s^{i,N}, \mu_s^{X,N}\big)-b\big(X_{s}^{i,N, n},\mu_s^{X,N}\big)\big\rangle ds
\\
& + 2  E \int_0^t \big\langle X_s^{i,N}-X_s^{i,N, n}, b\big(X_s^{i,N,n}, \mu_s^{X,N}\big)-b\big(X_s^{i,N,n}, \mu_s^{X,N,n}\big)\big\rangle ds
\\
& + 2  E \int_0^t \big\langle X_s^{i,N}-X_s^{i,N, n}, b\big(X_s^{i,N,n}, \mu_s^{X,N,n}\big)-b_n\big(X_{\kappa_n(s)}^{i,N, n}, \mu_{\kappa_n(s)}^{X,N,n}\big) \big\rangle ds
\\
& +K  E \int_0^t \big|\sigma\big(X_s^{i,N}, \mu_s^{X,N}\big)-\sigma\big(X_s^{i,N,n}, \mu_s^{X,N}\big) \big|^2 ds
\\
& + K E \int_0^t \big|\sigma\big(X_s^{i,N,n}, \mu_s^{X,N}\big)-\sigma\big(X_s^{i,N,n}, \mu_s^{X,N,n}\big)  \big|^2 ds
\\
& + K E \int_0^t \big|\sigma\big(X_s^{i,N,n}, \mu_s^{X,N,n}\big)-\tilde{\sigma}\big(s,X_{\kappa_n(s)}^{i,N,n}, \mu_{\kappa_n(s)}^{X,N,n}\big)  \big|^2 ds
\end{align*}
which on using Assumption \ref{as:b:lip}, Assumption \ref{as:sig:lip}, Young's inequality, Lemma \ref{lem:sigma:rate} and Lemma \ref{lem:b-b:x} yields,
\begin{align*}
E\big|X_t^{i,N} & -X_t^{i,N, n}\big|^2  \leq  K   n^{-2}  + E \int_0^t \big| X_s^{i,N}-X_s^{i,N, n}\big|^2 ds 
\\
& \qquad + K  E \int_0^t \mathcal{W}_2\big( \mu_s^{X,N}, \mu_s^{X,N,n}\big)^2  ds 
\end{align*}
which further implies, 
\begin{align*}
\sup_{i\in\{1,\ldots,N\}}\sup_{r\in[0,t]}E\big|X_r^{i,N} & -X_r^{i,N, n}\big|^2 \leq K n^{-2}+\int_{0}^t \sup_{i\in\{1,\ldots,N\}}\sup_{r\in[0,s]}E\big|X_r^{i,N} & -X_r^{i,N, n}\big|^2 ds
\end{align*}
for any $t\in[0,T]$ and $n,N\in\mathbb{N}$. Thus, the Gronwall's inequality completes the proof. 


\begin{thebibliography}{99}

\bibitem{ambrosio2008}
\newblock Ambrosio L., Gigli N. and Savar\'e G., 
\newblock \emph{Gradient Flows in Metric Spaces and in the Space of Probability Measures}
\newblock 2nd ed.,  Birkhäuser, Basel, (2008). 

\bibitem{baladron2012}
\newblock  Baladron J., Fasoli D., Faugeras O., and Touboul J., 
\newblock Mean-field description and propagation of chaos in networks of Hodgkin-Huxley and FitzHugh-Nagumo neurons, 
\newblock \emph{J. of Mathematical Neuroscience}, \textbf{2(1):10}, (2012). 

\bibitem{beyn2017}
 \newblock  Beyn W.-J., Isaak E.,Kruse  R.,    
  \newblock Stochastic C-stability and B-consistency of explicit and implicit Milstein-type schemes, 
   \newblock \emph{J of Scientific Computing}, \textbf{70(3)}, (2017), 1042--1077. 


\bibitem{bolley2011}
\newblock Bolley F., Ca\~nizo J. A., and  Carrillo J. A.,
\newblock Stochastic mean-field limit: non-Lipschitz forces and swarming,
\newblock \emph{Math Models Methods Appl Sci} , \textbf{21}, (2011), 2179--2210.

\bibitem{bossy2015} 
\newblock  Bossy M., Faugeras O., and Talay D.,
\newblock  Clarification and complement to ``mean-field description
and propagation of chaos in networks of Hodgkin–Huxley and FitzHugh–Nagumo neurons",
\newblock \emph{J. of Mathematical Neuroscience}, \textbf{5(1):19}, (2015).

\bibitem{cardaliaguet2013}
 \newblock Cardaliaguet P., 
  \newblock Notes on Mean-field games, notes from P. L. Lions lectures at Coll\`ege de France, 
   \newblock \url{https://www.ceremade.dauphine.fr/cardalia/MFG100629.pdf}, (2013).
   
\bibitem{carmona2015}   
 \newblock Carmona R. and Delarue F., 
 \newblock Forward-backward stochastic differential equations and controlled McKean--Vlasov dynamics, 
 \newblock \emph{Ann. Probab.},  \textbf{43(5)}, (2015), 2647--2700 
   
   \bibitem{chassagneux} 
    \newblock Chassagneux J.-F., Crisan D., Delarue F., 
    \newblock A Probabilistic approach to classical solutions of the master equation for large population equilibria
     \newblock \emph{forthcoming in Memoirs of the AMS}.
   
   
\bibitem{dareiotis2016}
     \newblock Dareiotis K., Kumar C.    and  Sabanis S., 
     \newblock On tamed Euler approximations of SDEs driven by L\'evy noise with application to delay equations,
     \newblock \emph{SIAM Journal on Numerical Analysis}, \textbf{54(3)}, (2016), 1840--1872.
     
   \bibitem{dreyer2011} 
 \newblock Dreyer W., Gaber\v{s}\v{c}ek M., Guhlke C., Huth R., and Jamnik, J., 
  \newblock  Phase transition in a rechargeable lithium battery,
   \newblock  \emph{European Journal of Applied Mathematics}, \textbf{22(3)}, (2011), 267--290. 

  \bibitem{guhlke2018}  
  \newblock Guhlke C., Gajewski P., Maurelli M., Friz P. K., and Dreyer W.,
     \newblock Stochastic many-particle model for LFP electrodes,
      \newblock \emph{Contin. Mech. Thermodyn.}, \textbf{30(3)}, (2018), 593--628.  
      
      
       \bibitem{giles2008}  
        \newblock  Giles M. B.,
          \newblock Multilevel Monte Carlo path simulation, 
           \newblock \emph{Oper. Res.}, \textbf{56},  (2008),  607--617. 
      
      \bibitem{hutzenthaler2015}
     \newblock  Hutzenthaler M.  and Jentzen A., 
     \newblock Numerical approximations of stochastic differential equations with non globally Lipschitz continuous coefficients,
     \newblock \emph{Mem. Amer. Math. Soc.}, \textbf{236}, (2015), 1112.
%
\bibitem{hutzenthaler2010}
     \newblock  Hutzenthaler M., Jentzen  A.   and Kloeden P. E.,  
     \newblock Strong and weak divergence in finite time of Euler's method for stochastic differential equations with non-globally Lipschitz continuous coefficients,
     \newblock \emph{Proceedings of the Royal Society A}, \textbf{467} (2010), 1563--1576.
%

\bibitem{hutzenthaler2012}
     \newblock Hutzenthaler  M., Jentzen A.   and  Kloeden P. E., 
     \newblock Strong convergence of an explicit numerical method for SDEs with nonglobally Lipschitz continuous coefficients,
     \newblock \emph{The Annals of Applied Probability}, \textbf{22(4)}, (2012), 1611--1641.
%
\bibitem{kumar2017b}
     \newblock  Kumar C.  and  Sabanis S., 
     \newblock On explicit approximations for L\'evy driven SDEs with super-linear diffusion coefficients,
     \newblock \emph{Electronic Journal of Probability}, \textbf{22(73)}, (2017), 1--19.
     
          \bibitem{kumar2019}
     \newblock  Kumar C.  and  Sabanis, S.,  
     \newblock On Milstein approximations with varying coefficients: the case of super-linear diffusion coefficients, 
     \newblock  \emph{BIT Numerical Mathematics}, \textbf{59(4)}, (2019), 929--96. 
%
     \bibitem{kumar2017a}
     \newblock  Kumar C.  and Sabanis S.,  
     \newblock On tamed Milstein scheme of SDEs driven by L\'evy noise,
     \newblock \emph{Discrete and Continuous Dynamical Systems-Series B}, \textbf{22(2)} (2017), 421--463.
%
        \bibitem{reis2019a}
     \newblock  Reis G. dos,  Engelhardt S. and Smith G.,
     \newblock Simulation of McKean-Vlasov SDEs with super linear Growth, 
     \newblock  \emph{ArXiv:1808.05530v2}, (2019).  
    
 \bibitem{reis2019b}
     \newblock  Reis G. dos,  Salkeld W. and Tugaut J.,
     \newblock Freidlin-Wentzell LDP in path space for McKeanVlasov equations and the functional iterated logarithm law, 
     \newblock  \emph{Ann. Appl. Probab.}, \textbf{29(3)}, (2019), 1487--1540.      
%

 \bibitem{sabanis2013}
     \newblock  Sabanis S.,  
     \newblock A note on tamed Euler approximations,
     \newblock \emph{Electronic Communications in Probability}, \textbf{18} (2013), 1-10.
%
\bibitem{sabanis2016}
     \newblock Sabanis S.,  
     \newblock Euler approximations with varying coefficients: the case of superlinearly growing diffusion coefficients,
     \newblock \emph{The Annals of Applied Probability}, \textbf{26(4)} (2016), 2083--2105.
   
\bibitem{tien2013}
    \newblock     Tien D. N., 
    \newblock A stochastic Ginzburg-Landau equation with impulsive effects, 
    \newblock \emph{Physica A: Statistical Mechanics and its Applications}, \textbf{392} (2013), 1962--1971.
    
%
 \bibitem{tretyakov2013}
     \newblock  Tretyakov M. V.  and  Zhang  Z.,  
     \newblock A fundamental mean-square convergence theorem for SDEs with locally Lipschitz coefficients and its applications,
     \newblock \emph{SIAM Journal on Numerical Analysis}, \textbf{51} (2013), 3135--3162.

       \bibitem{villani2009}
   \newblock Villani C., \emph{Optimal Transport: Old and New},  Springer, Berlin, (2009).
   
    \bibitem{wang2013}
     \newblock Wang   X.  and Gan S., 
     \newblock The tamed Milstein method for commutative stochastic differential equations with non-globally Lipschitz continuous coefficients,
     \newblock \emph{Journal of Difference Equations and Applications}, \textbf{19(3)} (2013), 466--490.

\end{thebibliography}
\end{document}